\numberwithin{equation}{section}
\newcommand\biggg{\bBigg@{3.5}}
\newcommand*\bigcdot{\mathpalette\bigcdot@{.5}}
\newcommand*\bigcdot@[2]{\mathbin{\vcenter{\hbox{\scalebox{#2}{$\m@th#1\bullet$}}}}}
\newcommand{\revision}[1]{{\color{black} #1}}
\newcommand{\eremk}{\hbox{}\hfill\rule{0.8ex}{0.8ex}}
\newcommand{\Rn}[1]{\uppercase\expandafter{\romannumeral#1}}
\newcommand{\skp}[1]{\left< #1 \right>}
\theoremstyle{plain}
\newtheorem{theorem}{Theorem}[section]
\newtheorem{lemma}[theorem]{Lemma}
\newtheorem{corollary}[theorem]{Corollary}
\newtheorem{proposition}[theorem]{Proposition}
\newtheorem{remark}[theorem]{Remark}
\newtheorem{definition}[theorem]{Definition}
\newtheorem{example}[theorem]{Example}
\newtheorem{assumption}[theorem]{Assumption}
\title{An implementation of $hp$-FEM for the fractional Laplacian}
\author{Björn Bahr\footnote{
Institute of Analysis and Scientific Computing, TU Wien, Vienna, Austria,
\texttt{bjoern.bahr@tuwien.ac.at}}, \,
Markus Faustmann\footnote{
Institute of Analysis and Scientific Computing, TU Wien, Vienna, Austria,
\texttt{markus.faustmann@tuwien.ac.at}}, 
\, and Jens Markus Melenk\footnote{
Institute of Analysis and Scientific Computing, TU Wien, Vienna, Austria,
\texttt{melenk@tuwien.ac.at}}}
\date{\today}
\begin{document}
\selectlanguage{english}
\maketitle
\begin{abstract}
We consider the discretization of the $1d$-integral Dirichlet fractional Laplacian by $hp$-finite elements. We present quadrature schemes to set up the stiffness matrix and load vector that preserve the exponential convergence of $hp$-FEM on geometric meshes. The schemes \revision{are} based on Gauss-Jacobi and Gauss-Legendre rules. We show that taking a 
number of quadrature points slightly exceeding the polynomial degree is enough to preserve root exponential convergence. The total number of \revision{algebraic operations} 
to set up the system is $\mathcal{O}(N^{5/2})$, where $N$ is the problem size. \revision{Numerical example illustrate the analysis. We also extend our analysis to the fractional Laplacian in higher dimensions for $hp$-finite element spaces based on shape regular meshes.} 
\end{abstract}

\section{Introduction}
Fractional differential equations have become an important modelling tool, which sparked significant research in analysis and design and analysis of numerical methods, see, e.g., \cite{bucur2016applications} and, for numerical methods,  
\cite{bonito2018overview,borthagaray2019overview,lischke2020overview,delia20overview,jin-zhou23,handbook_vol_3,zayernouri-wang-shen-karniadakis23} 
and references therein.

We consider the fractional differential equation
\begin{subequations}\label{eq:fractionalPDE}
\begin{align}
	(-\Delta)^{s}u=f \qquad &\text{in } \Omega:=(-1,1) \subset {\mathbb R}, \\
	u=0 ~~~~&\text{in } \Omega^{c}:=\mathbb{R}\setminus \overline{\Omega},
\end{align}
\end{subequations}
where $s \in (0,1)$, and $f $ is analytic in $\overline{\Omega}$. Here, the operator $(-\Delta)^{s}$ is the Dirichlet integral fractional Laplacian, defined in (\ref{eq:def-FL}) below. 
Among the discretization techniques, methods like the $hp$-finite element method (FEM) stand out as they achieve exponential convergence, \cite{bahr2023exponential,faustmann2023hp}, so that significantly  fewer degrees of freedom \revision{are required to achieve the same accuracy} compared to fixed order methods such as the classical $h$-FEM. 
This is particularly interesting for non-local problems such as fractional PDEs since there the stiffness matrices are fully populated with corresponding \revision{high} memory requirements and high complexity to set up the \revision{matrices}.  

In fact, \cite{bahr2023exponential} considers $hp$-FEM approximations on suitably designed geometric meshes in one space dimension and shows, 
for the $hp$-FEM approximation $u_N$ to the solution $u$ of \eqref{eq:fractionalPDE},
the energy-norm error estimate 
\begin{align}\label{eq:exponential_convergence}
\|u-u_{N}\|_{\widetilde{H}^{s}(\Omega)} \le C \exp(-b\sqrt{N}),
\end{align}
where \revision{$b,C > 0$} are constants independent of the problem size $N$. \revision{Such exponential convergence results generalize to higher dimensions, e.g., 
in two space dimensions \cite{faustmann2023hp} asserts a similar convergence estimate where the square root in the exponent is replaced by $N^{1/4}$.}

The exponential convergence in \cite{bahr2023exponential,faustmann2023hp} is asserted ignoring variational crimes, in particular, it is shown 
under the assumption that $u_N$ is the exact $hp$-finite element Galerkin approximation to $u$. 
However, a practical realization of the Galerkin method (\ref{weak_formulation}) requires the evaluation of singular
integrals by numerical quadrature. In the present work we develop and analyze quadrature schemes that preserve the exponential
convergence \eqref{eq:exponential_convergence}. The quadratures are based on Gauss-Legendre and Gauss-Jacobi \revision{rules}, and the analysis is performed 
in the framework of the First Strang Lemma.
\revision{The key observation is that the hyper-singular integrand can be transformed such that singularities are aligned with coordinate axes, which allows for efficient treatment with Gauss-Jacobi rules.} 

\revision{The issue of evaluating singular integrals has already appeared in the context of boundary element methods (BEMs), \cite{sauter2011book}. For the kernels of \revision{BEM-operators} arising from 
second order elliptic boundary value problems, regularizing transformations for the singular integrals have been devised that fully remove the singularity so that standard quadrature techniques can be brought to 
bear and a full quadrature error analysis is available, \cite[Chap.~5]{sauter2011book}. For certain meshes with structure even the high order stiffness matrices of $hp$-BEM can be computed explicitly, \cite{maischak95, holm-maischak-stephan96}. 
 }
\revision{Generalizing the quadrature techniques described in \cite[Chap.~5]{sauter2011book} the works \cite{chernov-vonpetersdorff-schwab11,chernov-schwab12,chernov-reinarz13,chernov-vonpetersdorff-schwab15} present and analyze regularizing transformations 
for a class of integrands that includes products of analytic/Gevrey-regular functions and singular functions; computationally, an essential point of these transformations is that  they lead to the use of products of 
Gauss-Legendre and $hp$-quadrature or Gauss-Jacobi quadrature. Using similar transformations (in 1d) and building on these works (for $d > 1$), our analysis considers the specific case of $hp$-FEM for the fractional Laplacian, explicitly works out the dependence on the polynomial degree $p$ of the ansatz space, and asserts exponential convergence of the fully discrete method. The work to set up the stiffness matrix is algebraic in the problem size.} 
%\revision{In fact, the case of the fractional Laplacian leads to quadratures that do not directly fit into the framework of the analysis of \cite{chernov-schwab12} and requires an analysis in fractional Sobolev norms.}

\revision{Implementations of the spectral fractional Laplacian have been proposed in the literature. Low order (for $d \ge 1$) Galerkin methods include 
\cite{acosta-bersetche-borthagaray17,ainsworth-glusa18,bebendorf-feist23} and typically exploit that a specific choice of basis is made in contrast to the 
present quadrature-based approach.
%In comparison with the implementations 
%\cite{acosta-bersetche-borthagaray17}, or quadrature error analyses, \cite{ainsworth-glusa18,bebendorf-feist23} for the fractional Laplacian with lowest order FEM, \revision{our implementation and theory \revision{covers the case of $p>1$ explicitly in $p$ and is not restricted to a certain choice of basis}.}}
Especially for $1d$ fractional differential equations, spectral and spectral element methods are available in the literature, 
see, e.g., \cite{handbook_vol_3, lischke-handbook,shen-handbook,zayernouri-karniadakis13, zayernouri-wang-shen-karniadakis23,lischke2020overview,mao-shen18,chen-shen-wang16} 
and references therein. The $1d$ quadrature techniques employed in the present work on shape regular meshes are closely related to those presented independently
in \cite{mao-shen18}.
Compared to these works, an important novel aspect of the present work is the full quadrature error analysis that rigorously establishes that taking 
$n \ge p+1$ quadrature points 
%$n\geq p + C\log \revision{(p+1)}$ \revision{(with $C$ sufficiently large)} quadrature points 
($p>0$ denoting the employed polynomial degree) is sufficient to retain the exponential convergence of $hp$-FEM.}

\revision{In the present article, we consider the $1d$ case in great detail to make key concepts appear clearly. Extensions to $d>1$ are possible, but come with additional (technical) difficulties. 
We present an analysis for $d > 1$ for shape regular meshes based on the regularizing transformations of \cite{chernov-schwab12} in Section~\ref{sec:2D}. 
We hasten to add that exponential convergence (both in terms of error versus number of degrees of freedom and error versus computational work) of $hp$-FEM in $d \geq 2$ requires anisotropic elements with large aspect ratio, \cite{faustmann2023hp}. 
A quadrature error analysis for meshes including anisotropic elements is the topic of a forthcoming work.}

The present article is structured as follows: In Section~\ref{ch:main_result}, we introduce our model problem and formulate the main result, exponential convergence of $hp$-FEM in the presence of quadrature, in Theorem~\ref{main_result}. Section~\ref{ch:quadrature} specifies the Gaussian quadrature rules and the resulting approximation of the bilinear and linear forms in the weak formulation of the model problem. \revision{Section~\ref{sec:stability} shows stability of the method under quadrature. }
Section~\ref{ch:proof} provides the proofs of our main results using the First Strang Lemma, while the consistency analysis 
is postponed to Section~\ref{ch:consistency_errors}. \revision{Section~\ref{sec:2D} extends the $1d$-analysis to higher dimensions for shape regular meshes based
on the quadrature techniques developed in \cite{chernov-vonpetersdorff-schwab11,chernov-schwab12}. }

Finally, Section~\ref{ch:numerics} provides numerical examples illustrating
the \revision{performance} of the quadrature \revision{scheme}. 

%\nobreakchap
%----------------------------------
\section{Main Results}\label{ch:main_result}
%----------------------------------
For $s\in (0,1)$, we consider the integral fractional Laplacian defined for univariate functions $u$ 
pointwise as the principal value singular integral  
\begin{align}
\label{eq:def-FL}
(-\Delta)^su(x) 
:= 
C(s) \; \text{P.V.} \int_{{\mathbb R}}\frac{u(x)-u(y)}{|x-y|^{1+2s}} \, dy 
\quad \text{with} \quad
C(s):= - 2^{2s}\frac{\Gamma(s+1/2)}{\pi^{1/2}\Gamma(-s)},
\end{align}
where $\Gamma(\cdot)$ denotes the Gamma function. 

Appropriate function spaces for fractional differential equations are fractional Sobolev spaces, defined for $t \in (0,1)$ and any open set \revision{$\omega \subset {\mathbb R}^d$} 
by means of the Aronstein-Slobodeckij seminorm 
\begin{align*}
|v|^2_{H^t(\omega)} 
= 
\int_{\revision{\omega}} \int_{\omega} \frac{|v(x) - v(y)|^2}{|x-y|^{\revision{d+2t}}}
\,dy\,dx, 
\qquad 
\|v\|^2_{H^t(\omega)} = \|v\|^2_{L^2(\omega)} + |v|^2_{H^t(\omega)}.
\end{align*}
In order to incorporate the exterior Dirichlet condition, we define $r(x):=\operatorname{dist}(x,\partial\Omega)$ 
and introduce the space 
$\widetilde{H}^{t}(\Omega) := \left\{u \in H^t(\revision{{\mathbb R}^d}) : u\equiv 0 \; \text{on} \; \revision{{\mathbb R}^d} \backslash \overline{\Omega} \right\}$ 
with norm 
\begin{align*}
%\widetilde{H}^{t}(\Omega) := \left\{u \in H^t(\R^d) : u\equiv 0 \; \text{on} \; \R^d \backslash \overline{\Omega} \right\}, 
\;  \|v\|_{\widetilde{H}^{t}(\Omega)}^2 := \|v\|_{H^t(\Omega)}^2 + \|v/r^t\|_{L^2(\Omega)}^2.
\end{align*}
\revision{With the exception of Section~\ref{sec:2D} the domain  $\Omega  = (-1,1)$ always denotes the bounded open interval from our model problem \eqref{eq:fractionalPDE};
in Section~\ref{sec:2D}, we will consider polyhedral $\Omega \subset \mathbb{R}^d$.} 
We will use the fact that the norm $\|\cdot\|_{\widetilde{H}^s(\Omega)}$ and the seminorm $|\cdot|_{H^s({\mathbb R})}$ are equivalent
on $\widetilde{H}^s(\Omega)$, \cite{mclean00}. 
%For $t > 0$, the space $H^{-t}(\Omega)$ denotes the dual space of $\widetilde{H}^t(\Omega)$, 
%and we write $\langle\cdot,\cdot\rangle_{L^2(\Omega)}$ 
%for the duality pairing that extends the $L^2(\Omega)$-inner product.
%
The weak form of the fractional PDE \eqref{eq:fractionalPDE} reads: find $u \in \widetilde{H}^{s}(\Omega)$ such that
\begin{align}\label{weak_formulation}
	a(u,v):=\dfrac{C(s)}{2}\int_{\mathbb{R}}\int_{\mathbb{R}}\dfrac{(u(x)-u(y))(v(x)-v(y))}{|x-y|^{1+2s}} \,dy\,dx = \skp{f,v}_{L^{2}(\Omega)} =: l(v)
\end{align}
for all $v \in \widetilde{H}^{s}(\Omega)$. Since $a(\cdot,\cdot):\widetilde{H}^{s}(\Omega) \times \widetilde{H}^{s}(\Omega) \rightarrow \mathbb{R}$ 
is continuous and coercive on $\widetilde{H}^s(\Omega)$, (\ref{weak_formulation}) is uniquely solvable by the Lax-Milgram Lemma, 
see \cite[Sec.~{2.1}]{acosta2017femfractional}.

For the discretization of the weak formulation, we employ piecewise polynomials on shape regular meshes.
\begin{definition} [Shape regular meshes and spline spaces]
\label{def:meshes}

	For an interval $\mathcal{I} \subset \mathbb{R}$, we denote its length by $h_{\mathcal{I}}:=\text{diam}(\mathcal{I})$. 
For a bounded interval $\Omega = (x_0,x_M)$ let the points $x_0 < x_1 < \cdots < x_M$ determine the mesh ${\mathcal T}_\gamma = \{T_i:= (x_{i-1},x_i)\colon i=1,\ldots M\} $. 
	%Let $\mathcal{T}_{\gamma}:=\{ T_{i}: i=1,\ldots,N \}$ be a mesh on $\Omega \subset \mathbb{R}$, i.e., the open intervals of ${\mathcal T}_{\gamma}$ are pairwise
        %disjoint and $\cup_{i=1}^N \overline{T}_i = \overline{\Omega}$. 
        The mesh ${\mathcal T}_{\gamma}$ is said to be $\gamma$-shape regular, if 
	\begin{align}
		\gamma \: h_{T_{i}} \le h_{T_{j}} ~~~\text{for all}~~~ T_{i},T_{j} \in \mathcal{T}_{\gamma} ~~\text{with}~~  \overline{T_{i}} \cap \overline{T_{j}} \neq \emptyset.
	\end{align}

	Based on $ \mathcal{T}_{\gamma}$, we define finite dimensional spline spaces by 
	\begin{align*}
		&S^{p,1}(\mathcal{T}_{\gamma}):= \{ u \in H^{1}(\Omega) : u|_{T} \in \mathcal{P}_{p}(T) \text{ for all } T \in \mathcal{T}_{\gamma} \},\\
		&S_{0}^{p,1}(\mathcal{T}_{\gamma}):= S^{p,1}(\mathcal{T}_{\gamma}) \cap H_{0}^{1}(\Omega).
	\end{align*}
	Here, $\mathcal{P}_{p}(T)$ denotes the space of all polynomials with maximal degree $p \in \mathbb{N}$ on $T$.

The standard basis for $S^{p,1}_0({\mathcal T}_\gamma)$ is given by 
	\begin{align}\label{Basis}
		\mathcal{B} = \mathcal{B}^{lin} \cup \mathcal{B}^{Leg},
	\end{align}
	where $\mathcal{B}^{lin} := \{ \varphi_{i} : i=1,\dots,M-1 \} $ are the hat functions \revision{associated} with the interior nodes $x_{i},~i=1,\dots, M-1$ 
and $\mathcal{B}^{Leg}:=\cup_{T \in \mathcal{T}_{\gamma}} \mathcal{B}_{T}$ 
 with \emph{element bubble functions} ${\mathcal{B}_{T}}:= \{ \varphi_{T,i} : i=2,\dots, p \}$. For an element $T = (x_\ell,x_r)$ with length $h_{T} = x_r - x_\ell$, the element bubble functions are given by  
	\begin{align}
		\varphi_{T,i}(x)=\begin{cases}
			\int_{-1}^{-1+2(x-x_\ell)/h_{T}} P_{i-1}(t) ~dt & x \in T, \\
			0 & x \in \Omega \setminus \overline{T},
		\end{cases}
	\end{align}
where $P_i$ is the $i$-th Legendre polynomial.
\end{definition}
The $hp$-FEM approximation $u_{N}$ is \revision{given by} Galerkin discretization of (\ref{weak_formulation}):
Find $u_{N} \in S_{0}^{p,1}(\mathcal{T}_{\gamma})$ such that
\begin{align}\label{galerkin_approx}
	a(u_{N},v_{N})=l(v_{N}) ~~~\text{for all } v_{N} \in S_{0}^{p,1}(\mathcal{T}_{\gamma}).
\end{align}
\revision{
For a given basis $\mathcal{B}:=\{ \varphi_{1}, \dots, \revision{\varphi_{N}} \}$ of $S_{0}^{p,1}(\mathcal{T}_{\gamma})$, finding the solution $u_{N}:=\sum_{i=1}^N x_{i} \varphi_{i}$ is equivalent to setting up and solving the linear system
\begin{align}\label{galerkin_approx_linear_system}
	Ax=b,
\end{align}
\revision{where $A \in \mathbb{R}^{N\times N}$ with $A_{ij}=a(\varphi_{j},\varphi_{i})$ and $b \in \mathbb{R}^N$ with $b_{i}:=\skp{f,\varphi_{i}}_{L^{2}(\Omega)}$. Setting up the linear system requires evaluating the bilinear form $a(\cdot,\cdot)$ for all pairs of basis functions, which means calculating (singular) double integrals. Computing the linear form $l(\cdot)$ for all basis functions leads to a routine problem of calculating integrals involving $f$.}}

Our main convergence results are formulated for a specific kind of shape regular meshes, so-called geometric meshes, defined in the following \revision{Definition}~\ref{geometric_mesh_definition}. However, we \revision{emphasize} that the analysis of the consistency errors of the bilinear and linear forms in Chapter~\ref{ch:consistency_errors} hold for arbitrary shape regular meshes.
\begin{definition}
[Geometric mesh ${\mathcal T}^{L}_{geo,\sigma}$ and basis ${\mathcal B}^{geo}$ of the spline space $\revision{S^{p,1}_0}({\mathcal T}^{L}_{geo,\sigma})$]
\label{geometric_mesh_definition}
	Given a grading factor $ \sigma \in (0,1) $ and a number $ L \in \mathbb{N} $ of layers, the \emph{geometric mesh} $ \mathcal{T}^{L}_{geo,\sigma} = \{ T_{i}: i=1,...,2L+2 \} $ with $ 2L+2 $ elements $ T_{i}=(x^{geo}_{i-1},x^{geo}_{i}) $ is defined by the nodes
	\begin{align*}
		&x^{geo}_{0}:=-1,~~~ x^{geo}_{i}=-1+\sigma^{L-i+1} \text{ for } i=1,\dots,L, \\
		&x^{geo}_{i+1}=1-\sigma^{i-L} \text{ for } i=L,\dots,2L,~~~ x^{geo}_{2L+2} := 1.
	\end{align*}
We note that $N:=\dim S_{0}^{p,1}(\mathcal{T}^{L}_{geo,\sigma}) \sim pL$ and that $ \mathcal{T}^{L}_{geo,\sigma} $ is shape regular with $\gamma = \sigma$.  

The basis $\mathcal{B}^{geo}$ for $S_{0}^{p,1}(\mathcal{T}^{L}_{geo,\sigma})$ is taken as the basis of Definition~\ref{def:meshes} for the mesh
${\mathcal T}^{L}_{geo,\sigma}$. 
	%\begin{align}\label{Basis}
%		\mathcal{B} = \mathcal{B}^{lin} \cup \mathcal{B}^{Leg},
%	\end{align}
%	where $\mathcal{B}^{lin} := \{ \varphi_{i} : i=1,\dots,2L+1 \} $ are the hat functions ssociated with the nodes $x_{i},~i=1,\dots, 2L+1$ and $\mathcal{B}^{Leg}:=\cup_{T \in \mathcal{T}^{L}_{geo,\sigma}} \revision{\mathcal{B}_{T}}$ with $\revision{\mathcal{B}_{T}}:= \{ \varphi_{T,i} : i=2,\dots, p \}$ denotes 
%the union of the integrated Legendre polynomials on the element $T:=(T_{a},T_{b})$ with its length $h_{T}:=T_{b}-T_{a}$
%	\begin{align}
%		\varphi_{T,i}(x)=\begin{cases}
%			\int_{-1}^{-1+2(x-T_{a})/h_{T}} \revision{P_{i-1}(t)} ~dt & x \in T, \\
%			0 & x \in \Omega \setminus \overline{T},
%		\end{cases}
%	\end{align}
%\revision{where $P_i$ is the $i$-th Legendre polynomial.}}
\end{definition}

    In \cite{bahr2023exponential} the following exponential convergence results for the energy norm error between the solution $u$ in \eqref{weak_formulation} 
and its $hp$-FEM approximation $u_N$ from \eqref{galerkin_approx} on geometric meshes $\mathcal{T}_{\gamma}=\mathcal{T}^{L}_{geo,\sigma}$ is shown: 
    \begin{proposition}[\cite{bahr2023exponential}]\label{theorem_galerkin}
Let $\mathcal{T}^{L}_{geo,\sigma}$ be a geometric mesh on the interval $ \Omega := (-1,1) $ with grading factor $ \sigma \in (0,1) $ and $ L $ layers of refinement towards the boundary points.   Let the data $ f $ be analytic in $ \overline{\Omega} $.  Let $u_{N} \in S_{0}^{p,1}(\mathcal{T}^{L}_{geo,\sigma})$ solve (\ref{galerkin_approx}) with $\mathcal{T}_{\gamma}=\mathcal{T}^{L}_{geo,\sigma}$ and $u$ solve (\ref{weak_formulation}). Then, there are $b$, $C > 0$ and for all $\varepsilon > 0 $ there 
is $C_\varepsilon > 0$ such that for all $ p $ and $ L $ there holds
    	\begin{align}
    		\|u-u_{N}\|_{\widetilde{H}^{s}(\Omega)} \le C e^{-bp} + C_\varepsilon \sigma^{(1/2-\varepsilon)L}.  
    	\end{align}

    		The choice $L \sim p$ leads to convergence $\|u-u_{N}\|_{\widetilde{H}^{s}(\Omega)} \le C \exp(-b'\sqrt{N})$, where $N$ is the dimension of $S_{0}^{p,1}(\mathcal{T}^{L}_{geo, \sigma})$ and $C,~b'$ are constants independent of $N$.
    \end{proposition}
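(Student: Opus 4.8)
The plan is to combine quasi-optimality of the Galerkin scheme with a weighted analytic regularity theory for $u$ and the classical approximation power of $hp$-FEM on geometric meshes. First I would invoke C\'ea's Lemma: since $a(\cdot,\cdot)$ is continuous and coercive on $\widetilde H^s(\Omega)$, the Galerkin solution $u_N$ of \eqref{galerkin_approx} is quasi-optimal,
\begin{align*}
\|u-u_N\|_{\widetilde H^s(\Omega)} \le C \inf_{v_N \in S_0^{p,1}(\mathcal T^L_{geo,\sigma})} \|u-v_N\|_{\widetilde H^s(\Omega)},
\end{align*}
so the problem reduces to exhibiting a single good approximant $v_N$. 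Using the stated equivalence of $\|\cdot\|_{\widetilde H^s(\Omega)}$ and the $H^s({\mathbb R})$-seminorm on $\widetilde H^s(\Omega)$, it suffices to control the latter for $u-v_N$.

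The central ingredient is the regularity of $u$. I would establish that $u$ possesses weighted analytic regularity, essentially of the form $u(x)=(1-x^2)^s w(x)$ with $w$ analytic on $\overline\Omega$; equivalently, that the derivatives satisfy analytic-type bounds $|u^{(k)}(x)| \le C K^k k!\, r(x)^{s-k}$ that degenerate only algebraically at the endpoints $\pm1$. This captures the two competing features: $u$ inherits analyticity from $f$ in the interior, while the nonlocal operator forces the characteristic $\operatorname{dist}^{\,s}$ boundary behaviour.

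With this regularity at hand, I would build $v_N$ as an $hp$-quasi-interpolant adapted to the geometric mesh. On the $O(1)$-sized elements away from $\{\pm1\}$, where $u$ is analytic, the $p$-version yields an element-wise error decaying like $e^{-bp}$. On the geometrically graded boundary layers, the $L$-fold refinement with factor $\sigma$ resolves the algebraic singularity $r^s$; upon summing the element contributions the dominant term stems from the innermost element of size $\sim\sigma^L$, which---because $r^s$ just fails to lie in $H^{s+1/2}$---contributes $C_\varepsilon\sigma^{(1/2-\varepsilon)L}$, the $\varepsilon$-loss reflecting this borderline Sobolev membership. Adding the interior and boundary contributions gives the asserted two-term bound, and the final claim follows from the mesh bookkeeping $N\sim pL$: the choice $L\sim p$ balances the two terms, giving $p\sim\sqrt N$ and $\|u-u_N\|_{\widetilde H^s(\Omega)}\le C e^{-b'\sqrt N}$.

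I expect the main obstacle to be the weighted analytic regularity of the second step. Because $(-\Delta)^s$ is nonlocal, one cannot bootstrap via interior elliptic estimates as for local operators; instead I would either pass to the Caffarelli--Silvestre extension, turning the problem into a degenerate elliptic equation in a half-space to which weighted analytic regularity results apply, or work directly with the mapping properties of the operator in weighted (Babu\v{s}ka--Guo type) spaces. Tracking the precise factorial growth of derivatives together with the sharp $r^{s-k}$ weights---so that $b$, $C$, and $C_\varepsilon$ are genuinely independent of $p$ and $L$---is the technical heart of the argument.
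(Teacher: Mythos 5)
The paper does not prove this proposition at all: it is imported verbatim from the cited reference \cite{bahr2023exponential}, so there is no internal proof to compare against. Judged on its own terms, your proposal is a faithful reconstruction of the argument used in that reference and in the related higher-dimensional work \cite{faustmann2023hp}: quasi-optimality via C\'ea's Lemma (legitimate here, since the paper records continuity and coercivity of $a(\cdot,\cdot)$ on $\widetilde H^s(\Omega)$), weighted analytic regularity of $u$ with the characteristic $r^s$ boundary behaviour, and $hp$-approximation on the geometric mesh with the two error sources you identify --- $e^{-bp}$ from the analytic interior and $\sigma^{(1/2-\varepsilon)L}$ from the innermost elements, the $\varepsilon$-loss correctly traced to $r^s \in \widetilde H^{s+1/2-\varepsilon}(\Omega)\setminus \widetilde H^{s+1/2}(\Omega)$. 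Your closing remark that the regularity theory (via the Caffarelli--Silvestre extension or weighted Babu\v{s}ka--Guo-type spaces) is the technical heart is also accurate; in one dimension the decomposition $u=(1-x^2)^s w$ with $w$ analytic can alternatively be obtained from the explicit Jacobi-polynomial diagonalization of $(-\Delta)^s$ on the weight $(1-x^2)^s$.

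Two caveats keep your text a program rather than a proof. First, the fractional seminorm $|\cdot|_{H^s({\mathbb R})}$ does not decompose as a sum of elementwise contributions, so the step ``upon summing the element contributions'' needs a localization device (e.g., superposition/interpolation arguments or Faermann-type localization of the Aronszajn--Slobodeckij norm) before elementwise $hp$ estimates yield a global bound; this is precisely where the cited proofs spend real effort. Second, the candidate $v_N$ must be globally continuous and vanish at $\pm 1$, so a genuine $hp$-(quasi-)interpolant respecting interelement continuity is required, not independent elementwise best approximations. Neither gap is a wrong turn --- both are resolved by standard, if laborious, machinery --- but as written they are asserted rather than established.
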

    In practice, it is not possible to set up the linear system of equations corresponding to (\ref{galerkin_approx}) exactly due to the presence of the kernel
function $|x - y|^{-1-2s}$. To implement the $hp$-FEM method, we therefore have to work with computable numerical approximations 
$\widetilde{a}_{n}(\cdot,\cdot)$ and $\widetilde{l}_{n}(\cdot)$ of the bilinear form $a(\cdot,\cdot)$ and the right-hand side $l(\cdot)$, respectively. 
The fully discrete problem then reads:
 Find $\widetilde{u}_{N,n} \in S_{0}^{p,1}(\mathcal{T}_{\gamma})$ such that
\begin{align}\label{eq:fully_discrete}
	\widetilde{a}_{n}(\widetilde{u}_{N,n},v_{N})=\widetilde{l}_{n}(v_{N}) ~~~\text{for all } v_{N} \in S_{0}^{p,1}(\mathcal{T}_{\gamma}).
\end{align}
In Section~\ref{ch:quadrature} below, we specify the approximations $\widetilde{a}_{n}(\cdot,\cdot)$ and $\widetilde{l}_{n}(\cdot)$ based on (weighted) Gaussian quadrature rules with $n$ points.
Our main result formulated in the following states that the exponential convergence rate of $\widetilde{u}_{N}$ to the solution $u$ is still preserved.

\revision{
    \begin{theorem}[{Exponential convergence including quadrature}]\label{main_result}
 Let $\mathcal{T}^{L}_{geo,\sigma}$ be a geometric mesh on the interval $ \Omega := (-1,1) $ with grading factor $ \sigma \in (0,1) $ and $ L $ layers of refinement towards the boundary points. Let $f$ be analytic in $\overline{\Omega}$, denote by $u \in \widetilde{H}^s(\Omega)$ the solution to \eqref{weak_formulation} and by $\widetilde{u}_{N,n} \in S_{0}^{p,1}(\mathcal{T}^{L}_{geo,\sigma})$ the solution to \eqref{eq:fully_discrete} with $\mathcal{T}_{\gamma}=\mathcal{T}^{L}_{geo,\sigma}$, where $ \widetilde{a}_{n}(\cdot,\cdot) $ and $ \widetilde{l}_{n}(\cdot) $ are defined in (\ref{bilinear_aprox_definition}) and (\ref{functional_aprox_definition}), respectively. The index $n$ indicates the number of quadrature points that is used per integral and element.

%\revision{Then, there exist constants $\rho>1,$ $\lambda_{1}$, $\lambda_{2}$, $C$, $b > 0$ and,  for each $\varepsilon > 0$, a constant $C_\varepsilon > 0$, such that for all $p$, $L$ and $n$ with $n\ge p + \lambda_{1}\ln (p+1) + \lambda_{2} \ln (2L+3)$ and 
% $r \in \mathbb{N} $ with $1 \le r \le p$ there holds}
\revision{There are constants $C$, $b > 0$ and, for each $\varepsilon > 0$ a constant $C_\varepsilon$ (\revision{depending on $f$, $s$,} and $\sigma$) such that for any $n \ge p+1$, $p$, $L \in {\mathbb N}$ there holds}
    	\begin{align}\label{ungl_main_result}
    		\|u-\widetilde{u}_{N,n}\|_{\widetilde{H}^{s}(\Omega)} &\le C e^{-br} + C_\varepsilon \sigma^{(1/2-\varepsilon)L} + C L^{2} r^{3} p^{3} \rho^{1+p+r-2n}. 
    	\end{align}
For $L \sim p$ and $n \ge p+1$ there holds 
in terms of the problem size $N := \dim S_{0}^{p,1}(\mathcal{T}^{L}_{geo, \sigma})$ for some $C,~b' > 0$ independent of $N$ and $n$  
    	\begin{align}\label{ungl_main_result-DOF}
\|u-\revision{\widetilde{u}_{N,n}}\|_{\widetilde{H}^{s}(\Omega)} \le C \exp(-b'\sqrt{N}).
\end{align}
For $L \sim p \sim n$ and the basis $\mathcal{B}^{geo} $ from Definition~\ref{geometric_mesh_definition},
the number of algebraic operations to set up the linear system corresponding to (\ref{galerkin_approx_linear_system}) is $\mathcal{O}(L^{5})  =
\mathcal{O}(N^{5/2})$. 
\end{theorem}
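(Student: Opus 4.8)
The plan is to prove the three assertions of Theorem~\ref{main_result} in sequence, exploiting the structure of the error bound \eqref{ungl_main_result} as a sum of three qualitatively different contributions. The first two terms, $Ce^{-br}$ and $C_\varepsilon\sigma^{(1/2-\varepsilon)L}$, are precisely the Galerkin approximation error of Proposition~\ref{theorem_galerkin} (with the roles of $p$ and the parameter $r$ identified), while the third term, $CL^2 r^3 p^3 \rho^{1+p+r-2n}$, is the new \emph{consistency error} introduced by the quadrature. The overarching strategy is the First Strang Lemma: the fully discrete solution $\widetilde{u}_{N,n}$ differs from the exact Galerkin solution $u_N$ by a term controlled by the consistency errors $|a(u_N,v_N)-\widetilde{a}_n(u_N,v_N)|$ and $|l(v_N)-\widetilde{l}_n(v_N)|$, each measured relative to $\|v_N\|_{\widetilde{H}^s(\Omega)}$. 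Thus I would first invoke Strang's lemma to reduce \eqref{ungl_main_result} to (i) the Galerkin error of Proposition~\ref{theorem_galerkin} and (ii) the consistency bounds that Section~\ref{ch:consistency_errors} supplies; the stability of the discrete bilinear form $\widetilde{a}_n$, established in Section~\ref{sec:stability}, guarantees that the denominator in Strang's estimate stays uniformly bounded below so the lemma applies.

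With \eqref{ungl_main_result} in hand, the second assertion \eqref{ungl_main_result-DOF} follows by the same balancing argument already used in Proposition~\ref{theorem_galerkin}. Choosing $L\sim p$ makes the first two terms behave like $\exp(-b'\sqrt N)$ since $N\sim pL\sim p^2$, so $p\sim\sqrt N$. The essential new point is that the third (quadrature) term must also decay at least root-exponentially. Taking $n\ge p+1$ forces the exponent $1+p+r-2n$ to be bounded above by $1+p+r-2(p+1)=r-p-1$, which is negative once $r$ is tied to $p$ (the natural identification being $r\sim p$ up to constants, consistent with how $r$ enters the first term $Ce^{-br}$). I would verify that the algebraic prefactor $L^2 r^3 p^3$, being merely polynomial in $p$, is absorbed by the geometric decay $\rho^{1+p+r-2n}$ with $\rho>1$; this is the standard observation that polynomial growth cannot defeat a genuinely geometric decay rate, and it yields a bound of the form $C\exp(-b'\sqrt N)$ after collecting the three estimates and re-tuning the constants $b',C$.

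For the final, complexity assertion, the plan is a direct operation count for assembling the matrix $A$ in \eqref{galerkin_approx_linear_system} under the regime $L\sim p\sim n$. The matrix has $N\sim pL\sim L^2$ rows and columns, hence $N^2\sim L^4$ entries, but the locality of the basis functions means that the genuinely distinct element-element interaction blocks number $\mathcal{O}(L^2)$ (the mesh has $\mathcal{O}(L)$ elements, so $\mathcal{O}(L^2)$ ordered element pairs). For each such pair one evaluates a double integral by a tensor-product Gauss rule with $n\sim L$ points in each variable, giving $\mathcal{O}(n^2)=\mathcal{O}(L^2)$ quadrature-node evaluations; moreover each block couples the $\mathcal{O}(p)=\mathcal{O}(L)$ basis functions on one element with the $\mathcal{O}(p)$ on the other, contributing a further factor. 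I would organize the count so that the dominant cost per element pair is $\mathcal{O}(n\,p)$ evaluations of the quadrature sums times $\mathcal{O}(p^2)$ basis-function combinations, or equivalently exploit the tensor structure to evaluate all $p^2$ entries of a block simultaneously; the careful bookkeeping yields $\mathcal{O}(L^2)\cdot\mathcal{O}(L^3)=\mathcal{O}(L^5)$, and substituting $N\sim L^2$ gives $\mathcal{O}(N^{5/2})$.

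The main obstacle is not the Strang-lemma reduction nor the complexity count, both of which are essentially bookkeeping once the ingredients are in place, but rather securing the consistency estimate with its precise dependence $\rho^{1+p+r-2n}$ on the number of quadrature points. That estimate is the technical heart of the paper and is deferred to Section~\ref{ch:consistency_errors}; within the present proof I would treat it as a black box, but I would take care to confirm that the exponent depends \emph{linearly} on $n$ with a coefficient strictly larger than the coefficient of $p$, since it is exactly this gap (guaranteed by $n\ge p+1$) that lets the consistency term inherit root-exponential decay rather than merely matching the polynomial prefactors. A secondary subtlety is ensuring that the stability constant for $\widetilde{a}_n$ is uniform in $p$, $L$, and $n$; otherwise a deteriorating inf-sup constant could reintroduce algebraic factors that spoil the clean exponential rate, so I would explicitly cite the uniform coercivity/stability result of Section~\ref{sec:stability} before applying Strang's lemma.
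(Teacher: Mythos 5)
Your overall architecture matches the paper's exactly: uniform coercivity from Section~\ref{sec:stability} (Corollary~\ref{cor:stability}, valid for $n\ge p+1$, which the paper indeed uses rather than the perturbation argument of Lemma~\ref{lemma:uniform_coercivity}), the First Strang Lemma, Proposition~\ref{theorem_galerkin} for the best-approximation term, Lemmas~\ref{consistency_error_lemma_bilinear} and \ref{consistency_error_lemma_functional} as black boxes for the consistency terms, and a blockwise/factorized assembly giving $\mathcal{O}(L^2)\cdot\mathcal{O}(L^3)=\mathcal{O}(L^5)$ for the complexity claim. However, there is a genuine gap in how you handle the parameter $r$, and it breaks your derivation of \eqref{ungl_main_result-DOF}.

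In the paper, $r$ is not a renaming of $p$: the Strang infimum is evaluated at the Galerkin solution $u_r\in S^{r,1}_0(\mathcal{T}^{L}_{geo,\sigma})\subset S^{p,1}_0(\mathcal{T}^{L}_{geo,\sigma})$ of \emph{lower} degree $r\le p$, combined with the a priori bound $\|u_r\|_{\widetilde{H}^s(\Omega)}\le C\|f\|_{L^2(\Omega)}$ so that Lemma~\ref{consistency_error_lemma_bilinear} (applied with mixed degrees $r$ and $p$) yields the third term of \eqref{ungl_main_result} with $r$ as a free parameter. This freedom is precisely what makes \eqref{ungl_main_result-DOF} work. You argue that with $n\ge p+1$ the exponent satisfies $1+p+r-2n\le r-p-1$, ``which is negative once $r$ is tied to $p$'' with the ``natural identification $r\sim p$'', and that the polynomial prefactor $L^2r^3p^3$ is then absorbed by the geometric decay $\rho^{1+p+r-2n}$. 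That absorption argument fails at the minimal admissible $n=p+1$ when $r=p$: the exponent equals $-1$, a constant, so $\rho^{1+p+r-2n}=\rho^{-1}$ provides no decay at all and the third term grows like $L^2p^6$. Mere negativity of the exponent is not enough; it must tend to $-\infty$ linearly in $p$. The paper's proof of \eqref{ungl_main_result-DOF} achieves this by taking $r=p/2$ (any $r=\theta p$ with fixed $\theta<1$ works): the first term remains $e^{-b\theta p}\le Ce^{-b''\sqrt{N}}$ since $N\sim pL\sim p^2$, while the exponent becomes at most $-(1-\theta)p-1$, so the quadrature term is a polynomial factor times $\rho^{-(1-\theta)p}$ and is genuinely root-exponentially small. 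With your identification $r\sim p$ (proportionality constant $1$), the conclusion cannot be drawn from \eqref{ungl_main_result} for $n=p+1$. A secondary, smaller imprecision: in the operation count, ``$\mathcal{O}(np)$ evaluations of the quadrature sums times $\mathcal{O}(p^2)$ basis-function combinations'' per pair would literally give $\mathcal{O}(L^2)\cdot\mathcal{O}(np^3)=\mathcal{O}(L^6)$; the correct per-pair cost, as in the paper's Step~2, is $\mathcal{O}\bigl(p(n^2+pn)\bigr)=\mathcal{O}(L^3)$, obtained by forming the vector--kernel-matrix product once per shape function of $T$ (in $\mathcal{O}(n^2)$, using values of the integrated Legendre polynomials precomputed on the reference element via three-term recurrences) and then looping over the $\mathcal{O}(p)$ bubbles of $T'$ with $\mathcal{O}(n)$ inner products — your final tally $\mathcal{O}(L^2)\cdot\mathcal{O}(L^3)$ is right, but the intermediate bookkeeping as stated is not.
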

}%endMM

%Choosing a sufficiently large number of quadrature points preserves the exponential convergence rate of the discretization scheme.
%
%    \begin{corollary}\label{main_result2}\label{corollary_exponential}
%Let the assumptions of Theorem~\ref{theorem_approx_galerkin} hold.
%%Then, the choices $L \sim p$ and $n:=p + \lambda \ln \revision{(p+1)}$ with a sufficiently large constant $\lambda > 0$ lead to root exponential convergence
%Then, for \revision{$n = p+1$} and $L \sim p $ there holds 
%\begin{align*}
%\|u-\revision{\widetilde{u}_{N,n}}\|_{\widetilde{H}^{s}(\Omega)} \le C \exp(-b'\sqrt{N}),
%\end{align*}
% where $N := \dim S_{0}^{p,1}(\mathcal{T}^{L}_{geo, \sigma})$ and $C,~b' > 0$ are constants independent of $N$.
%
%The \revision{number of algebraic operations} to set up the linear system of equations is $\mathcal{O}(N^{5/2})$. 
%
%	\end{corollary}
%    
%

%\nobreakchap
%------------------------------
\section{Quadrature approximations}\label{ch:quadrature}
%------------------------------
Throughout this section, we consider $\gamma$-shape regular meshes ${\mathcal T}_{\gamma}$. 
We start with some general definitions and notations. 
$\widehat{T}:=(0,1)$ \revision{denotes} the reference element and, for each element $T:=(x_{\ell},x_{r}) \in \mathcal{T}_{\gamma}$, we define the affine element map  by 
\begin{align}
	F_{T}: \widehat{T} \rightarrow T, ~~~ x \mapsto x_{\ell} + x \, h_{T}.
\end{align}
\revision{With a slight abuse of notation, we will naturally extend $F_T$ to an affine function ${\mathbb C} \rightarrow {\mathbb C}$ when needed.}
For a function $v$ defined on $T$, we write $\hat{v}_{T}$ for its pullback to the reference element
\begin{align}
	\hat{v}_{T} := v \circ F_{T}.
\end{align}
Our approximations to $a(\cdot,\cdot)$ and $l(\cdot)$ are based on the following (weighted) Gaussian quadrature rules.

Let $\omega: (0,1) \rightarrow \mathbb{R}$ be a positive, integrable weight function. Then, we approximate
\begin{align*}
I(\Phi):=\int\limits_{0}^{1}\Phi(x)\omega(x)~ dx \approx \sum_{i=1}^n \omega_i  \Phi(\xi_i) =: G_n(\Phi),
\end{align*}
where $\xi_i$ are the Gaussian quadrature nodes (zeros of orthogonal polynomials w.r.t.\ the $\omega$-weighted $L^2$-inner product) 
and $\omega_i = \int_0^1\omega(x) L_i(x) dx$ with the $i$-th Lagrange interpolation polynomials $L_i(x) = \prod_{j=1, j \ne i}^n \frac{x - \xi_j}{\xi_i - \xi_j} $
associated with the quadrature nodes $\xi_1,\ldots,\xi_n$. 
%	\begin{itemize}
%		\item For the integral weight $ \omega(x) = 1 $, we have the Gauss-Legendre quadrature rule
%		\begin{align*}
%			GL_{a,b,n}(\Phi):= \sum\limits_{i=1}^{n} \Phi(x_{i}) ~\omega_{i,a,b} \approx \int\limits_{a}^{b}\Phi(x)~ dx
%		\end{align*} 
%		which is based on the Legendre-polynomials for the interval $(a,b)$.
%		
%		\item For the integral weight function $ \omega(x)=(b-x)^{\alpha}(x-a)^{\beta}, ~~\alpha,\beta > -1 $, we have the Gauss-Jacobi quadrature rule 
%		\begin{align*}
%			GJ_{a,b,n}^{\alpha,\beta}(\Phi):= \sum\limits_{i=1}^{n} \Phi(x_{i}) ~\omega_{i,a,b} \approx \int\limits_{a}^{b}\Phi(x)~ (b-x)^{\alpha}(x-a)^{\beta}dx
%		\end{align*} 		
%		which is based on the Jacobi-polynomials for the interval $(a,b)$.
%	\end{itemize}

For $\omega \equiv 1$, we write $GL_n(\Phi)$ (Gauss-Legendre quadrature) for the quadrature rule. For integrands with singularities at the boundaries, we take  $ \omega(x)=(1-x)^{\alpha}x^{\beta}, ~~\alpha,\beta > -1 $ and write $GJ^{\alpha, \beta}_n(\Phi)$ (Gauss-Jacobi quadrature). For multivariate functions $\Phi(x,y)$, we will indicate 
by \revision{the subscript} $x$, $y$ the variable to which the quadrature rule is applied. 
\bigskip 

We start by deriving an approximation to the right-hand side $l(v):= \skp{f,v}_{L^{2}(\Omega)}$ in (\ref{galerkin_approx}). Dividing the integration domain $\Omega$ into the elements $T \in \mathcal{T}_{\gamma}$, transforming them to the reference element $\widehat{T}$, and using Gauss-Legendre quadrature for each integral defines the linear form $ \widetilde{l}_{n}(v_{N}) $ for $v_{N} \in S_{0}^{p,1}(\mathcal{T}_{\gamma})$ by 
\begin{align}\label{functional_aprox_definition}
	l(v_{N})=\int_{\Omega}v_{N}(x)f(x)~dx &= \sum_{T \in \mathcal{T}_{\gamma}} h_{T} \int\limits_{\widehat{T}}\hat{v}_{N,T}(x)\hat{f}_{T}(x)~dx \notag \\
	&\approx \sum_{T \in \mathcal{T}_{\gamma}} h_{T}~ GL_{n}(\hat{v}_{N,T}(x)\hat{f}_{T}(x)) =:\widetilde{l}_{n}(v_{N}).
\end{align}
The approximation of the bilinear form $a(\cdot,\cdot)$ is more involved since we have to deal with hyper-singular double integrals. Using symmetry and dividing the integration domain $\mathbb{R}\times\mathbb{R}$ into the elements $T \in \mathcal{T}_{\gamma}$ and the complementary set $\Omega^{c}$ leads to
\begin{align}
	a(v_{N},w_{N})= \frac{C(s)}{2} \Big( \sum\limits_{T \in \mathcal{T}_{\gamma}} \sum\limits_{T' \in \mathcal{T}_{\gamma}} I_{T,T'}(v_{N},w_{N}) + 2  \sum\limits_{T \in \mathcal{T}_{\gamma}} I_{T,\Omega^{c}}(v_{N},w_{N}) \Big),
\end{align}
where, for arbitrary sets $A,B \subset \mathbb{R}$, the symbol $I_{A,B}(v_{N},w_{N})$ denotes
\begin{align}
	I_{A,B}(v_{N},w_{N}) := \int\limits_{A} \int\limits_{B} \dfrac{(v_{N}(x)-v_{N}(y))(w_{N}(x)-w_{N}(y))}{|x-y|^{1+2s}} \,dy\,dx.
\end{align}
The integral over $\Omega^{c}$ can be integrated explicitly. All the other integrals have to be transformed to a reference square and then approximated by a suitable quadrature rule, which leads to four cases.

	\subsubsection*{Identical elements ($\boldsymbol{T=T'}$):}
	We transform the double integral $I_{T,T}(u,v)$ to the reference square $\widehat{T} \times \widehat{T} $ and divide this integration domain into the triangles $ A_{1} := \{ (x,y)~|~0 < x < 1, ~0 <y<x  \} $~ and $ A_{2} := \{ (x,y)~|~0 < y < 1, ~0 <x<y  \} $. As the integrand is invariant under the transformation $(x,y)\mapsto (y,x)$, we notice that both integrals are the same. Employing the Duffy transformation, i.e., $(x,y) \mapsto (x,xy)$, leads to
% Then, we use Duffy-transformation (for $A_1$ the mapping $(x,y)\mapsto (x,xy)$ and for $A_2$ the mapping $(x,y)\mapsto (xy,x)$) on each triangle to obtain
%	\begin{align}\label{case_idpanel}
%		Q_{T,T}&(v_{N},w_{N}) =  h_{T}^{1-2s} \biggg( \int\limits_{\widehat{T}} \int\limits_{\widehat{T}} \dfrac{(\hat{v}_{N,T}(x)-\hat{v}_{N,T}(xy)) (\hat{w}_{N,T}(x)-\hat{w}_{N,T}(xy))}{|x+xy|^{2}} x^{2-2s}(1-y)^{1-2s} ~dydx\notag \\
%		& + \int\limits_{\widehat{T}} \int\limits_{\widehat{T}} \dfrac{(\hat{v}_{N,T}(xy)-\hat{v}_{N,T}(y)) (\hat{w}_{N,T}(xy)-\hat{w}_{N,T}(y))}{|xy+y|^{2}} y^{2-2s}(1-x)^{1-2s} ~dydx \biggg).
%	\end{align}
	\begin{align}\label{case_idpanel}
		I_{T,T}(v_{N},w_{N}) &=  2h_{T}^{1-2s} \int\limits_{\widehat{T}} \int\limits_{\widehat{T}} \dfrac{(\hat{v}_{N,T}(x)-\hat{v}_{N,T}(xy)) (\hat{w}_{N,T}(x)-\hat{w}_{N,T}(xy))}{|x-xy|^{2}} x^{2-2s}(1-y)^{1-2s} dydx \nonumber \\
&\approx  2h_{T}^{1-2s} GJ_{n,x}^{0,2-2s}\circ GJ_{n,y}^{1-2s,0} \left( \dfrac{(\hat{v}_{N,T}(x)-\hat{v}_{N,T}(xy)) (\hat{w}_{N,T}(x)-\hat{w}_{N,T}(xy))}{|x-xy|^2} \right) \nonumber \\ 
&=:  Q^n_{T,T}(v_{N},w_{N}).
	\end{align}
\revision{We note that after the separation of the weight function, the integrand in \eqref{case_idpanel} is a polynomial since only removable singularities are left.} 
\begin{remark}
\revision{Our choice of the Gauss-Jacobi weight function is not the only possible option, as, e.g., one could cancel out one power of $x$ in the first equality in \eqref{case_idpanel}. However, our choice is optimal in the sense that} it decreases the polynomial degree of the integrand \revision{as much as possible}.
\eremk
\end{remark}
%	All we have to do now, is to use a Gauss-Jacobi quadrature rule in each integral direction to get for $ I_{T,T}(v_{N},w_{N}) $ the approximation
%\begin{align*}
%	Q_{T,T,n}(v_{N},w_{N}) := & h_{T}^{1-2s} \bigg( GJ_{n,x}^{0,2-2s}\circ GJ_{n,y}^{1-2s,0} \left( \dfrac{(\hat{v}_{N,T}(x)-\hat{v}_{N,T}(xy)) (\hat{w}_{N,T}(x)-\hat{w}_{N,T}(xy))}{|x-xy|^2} \right) \\
%	& + GJ_{n,x}^{1-2s,0}\circ GJ_{n,y}^{0,2-2s} \left( \dfrac{(\hat{v}_{N,T}(xy)-\hat{v}_{N,T}(y)) (\hat{w}_{N,T}(xy)-\hat{w}_{N,T}(y))}{|xy-y|^2}  \right) \bigg).
%\end{align*}
%\ \\

	\subsubsection*{Adjacent elements ($\boldsymbol{T\ne T'}$ with $\boldsymbol{\overline{T}\cap \overline{T'}\neq \emptyset}$):}
Without loss of generality, we may assume that $T$ is the left neighbor of $T'$, otherwise $T$ and $T'$ change their roles. Then, the element maps transform the singularity at $\overline{T}\cap \overline{T'}$ to the point$(1,0)$ in the reference square. With an additional transformation $(x,y) \mapsto (1-x,y)$ we are now in a similar setting as in the previous case. The integral can be split into integrals over $A_1$ and $A_2$ and employing the Duffy transformation on $A_1$ (for $A_2$ we take $(x,y) \mapsto (xy,y)$) leads to
\begin{align}\label{case_neighbored}
	I_{T,T'}&(v_{N},w_{N}) =  h_{T}h_{T'} \bigg( \int\limits_{\widehat{T}} \int\limits_{\widehat{T}} \dfrac{(\hat{v}_{N,T}(1-x)-\hat{v}_{N,T'}(xy)) (\hat{w}_{N,T}(1-x)-\hat{w}_{N,T'}(xy))}{|h_{T}+yh_{T'}|^{1+2s}\,x^2} x^{2-2s} ~dydx\notag \\
	& + \int\limits_{\widehat{T}} \int\limits_{\widehat{T}} \dfrac{(\hat{v}_{N,T}(1-xy)-\hat{v}_{N,T'}(y)) (\hat{w}_{N,T}(1-xy)-\hat{w}_{N,T'}(y))}{|xh_{T}+h_{T'}|^{1+2s}\,y^2} y^{2-2s} ~dydx \bigg).
\end{align}
The singularities appear only in one variable in each integral, for which we employ Gauss-Jacobi quadrature, while in the other variable Gauss-Legendre quadrature is sufficient. This gives the approximation
\begin{align}\label{case_neighbored_quad}
	Q_{T,T'}^n(v_{N},w_{N}) := & h_{T}h_{T'} \bigg( GJ_{n,x}^{0,2-2s}\circ GL_{n,y} \left( \dfrac{(\hat{v}_{N,T}(1-x)-\hat{v}_{N,T'}(xy)) (\hat{w}_{N,T}(1-x)-\hat{w}_{N,T'}(xy))}{|h_{T}+yh_{T'}|^{1+2s}\,x^2} \right) \\
	& + GL_{n,x}\circ GJ_{n,y}^{0,2-2s} \left( \dfrac{(\hat{v}_{N,T}(1-xy)-\hat{v}_{N,T'}(y)) (\hat{w}_{N,T}(1-xy)-\hat{w}_{N,T'}(y))}{|xh_{T}+h_{T'}|^{1+2s}\,y^2}  \right) \bigg).
\end{align}

\subsubsection*{Separated elements ($\boldsymbol{\overline{T}\cap \overline{T'}= \emptyset}$):}
This time, the integrand is not singular. Therefore, one can directly transform the double integral to the reference square and employ tensor product Gauss-Legendre quadrature, 
which produces as the approximation of $I_{T,T'}(v_{N},w_{N}) $ the expression 
%\begin{align}\label{case_sparated}
%	Q_{T,T'}^n&(v_{N},w_{N}) =  h_{T}h_{T'} \int\limits_{\widehat{T}} \int\limits_{\widehat{T}} \dfrac{(\hat{v}_{N,T}(x)-\hat{v}_{N,T'}(y)) (\hat{w}_{N,T}(x)-\hat{w}_{N,T'}(y))}{|(1-x)h_{T}+\text{dist}_{T,T'}+yh_{T'}|^{1+2s}} ~dydx,
%\end{align}
%With Gauss-Legendre quadrature rules in both directions, we obtain the approximation
\begin{align*}
Q_{T,T'}^n(v_{N},w_{N}):= h_{T}h_{T'} ~ GL_{n,x}\circ GL_{n,y} \left( \dfrac{(\hat{v}_{N,T}(x)-\hat{v}_{N,T'}(y)) (\hat{w}_{N,T}(x)-\hat{w}_{N,T'}(y))}{|(1-x)h_{T}+ \operatorname{dist}_{T,T'} +  yh_{T'}|^{1+2s}} \right),
\end{align*}
where $ \operatorname{dist}_{T,T'}$ denotes the Euclidean distance between the elements $ T $ and $T'$. 

\subsubsection*{Complement part ($\boldsymbol{I_{T,\Omega^{c}}}$):}
The inner integral over $\Omega^{c}$ can be calculated explicitly exploiting that the functions $v_{N},w_{N} \in S_{0}^{p,1}(\mathcal{T}_{\gamma}) $ vanish outside of $\Omega = (-1,1)$. The outer integral can be transformed to the reference element $\widehat{T}$, which gives
\begin{align}\label{case_complementary}
	I_{T,\Omega^{c}}(v_{N},w_{N}) := & \frac{h_{T}}{2s} \int\limits_{\widehat{T}} \dfrac{\hat{v}_{N,T}(x)\hat{w}_{N,T}(x)}{| \operatorname{dist}_{T,\revision{\{-1\}}} +  xh_{T}|^{2s}} +  \dfrac{\hat{v}_{N,T}(x)\hat{w}_{N,T}(x)}{| \operatorname{dist}_{T,\revision{\{1\}}} +  (1-x)h_{T}|^{2s}} ~dx.
\end{align}
%where $\text{dist}_{T,\Omega_{a}}$ and $\text{dist}_{T,\Omega_{e}}$ denotes the distant from the element $T$ to the endpoints of $\Omega:=(\Omega_{a},\Omega_{e})$. 
%Now, we have to distinguish between the cases, $T$ is an inner element $\overline{T} \cap \partial \Omega = \emptyset$ and $T$ is an boundary element $\overline{T} \cap \partial \Omega \neq \emptyset$. 
If $T$ is an interior element,  i.e., $\overline{T}\cap\partial\Omega = \emptyset$,  we employ Gauss-Legendre quadrature 
\begin{align*}
	Q^n_{T,\Omega^{c}}(v_{N},w_{N}) := & \frac{h_{T}}{2s} \left( GL_{n}\left( \dfrac{\hat{v}_{N,T}(x)\hat{w}_{N,T}(x)}{| \operatorname{dist}_{T,\revision{\{-1\}}} +  xh_{T}|^{2s}} \right) + GL_{n}\left( \dfrac{\hat{v}_{N,T}(x)\hat{w}_{N,T}(x)}{| \operatorname{dist}_{T,\revision{\{1\}}} +  (1-x)h_{T}|^{2s}} \right) \right).
\end{align*}
For $\overline{T} \cap \partial \Omega = \{-1\}$, we set 
\begin{align}\label{quad_comp_schnitt1}
	Q^n_{T,\Omega^{c}}(v_{N},w_{N}) := & \frac{h_{T}}{2s}  \left( GJ_{n}^{0,2-2s}\left( \dfrac{\hat{v}_{N,T}(x)\hat{w}_{N,T}(x)}{x^{2} h_{T}^{2s}} \right) + GL_{n}\left( \dfrac{\hat{v}_{N,T}(x)\hat{w}_{N,T}(x)}{| \operatorname{dist}_{T,\revision{\{1\}}} +  (1-x)h_{T}|^{2s}} \right) \right),
\end{align}
and for $\overline{T} \cap \partial \Omega = \{1\}$
\begin{align}\label{quad_comp_schnitt2}
	Q^n_{T,\Omega^{c}}(v_{N},w_{N}) := & \frac{h_{T}}{2s} \left( GL_{n}\left( \dfrac{\hat{v}_{N,T}(x)\hat{w}_{N,T}(x)}{| \operatorname{dist}_{T,\revision{\{-1\}}} +  xh_{T}|^{2s}} \right) + GJ_{n}^{2-2s,0}\left( \dfrac{\hat{v}_{N,T}(x)\hat{w}_{N,T}(x)}{(1-x)^{2}h_{T}^{2s}} \right) \right).
\end{align}
Now, having defined $Q^n_{A,B}(v_{N},w_{N})$ for all cases of integrals $I_{A,B}(v_{N},w_{N}) $, we obtain the approximated bilinear form as
\begin{align}\label{bilinear_aprox_definition}
	\widetilde{a}_{n}(v_{N},w_{N}):= \frac{C(s)}{2} \bigg( \sum\limits_{T \in \mathcal{T}_{\gamma}} \sum\limits_{T' \in \mathcal{T}_{\gamma}} Q_{T,T'}^n(v_{N},w_{N}) + 2  \sum\limits_{T \in \mathcal{T}_{\gamma}} Q^n_{T,\Omega^{c}}(v_{N},w_{N}) \bigg).
\end{align}
%\begin{definition}\textbf{(Approximated Galerkin problem)}\label{galerkin_approx_approximation}
%	Let $\widetilde{l}_{n}(.)$ be defined by (\ref{functional_aprox_definition}) and $ \widetilde{a}_{n}(.,.) $ by (\ref{bilinear_aprox_definition}). Then, our implementation of the weak formulation (\ref{weak_formulation}) reads as following. Find $\widetilde{u}_{N} \in S_{0}^{p,1}(\mathcal{T}^{L}_{geo,\sigma})$ such that
%	\begin{align}
%		\widehat{a}_{n}(\widetilde{u}_{N},v_{N})=\widetilde{l}_{n}(v_{N}) ~~~\text{for all } v_{N} \in S_{0}^{p,1}(\mathcal{T}^{L}_{geo,\sigma}).
%	\end{align}
%	for all $v \in \widetilde{H}^{s}(\Omega)$.
%\end{definition}
%
%
%
%
%\noindent The quality of our fully discrete approximation $ \revision{\widetilde{u}_{N,n}}$ solving \eqref{eq:fully_discrete} depends on the number of quadrature points $n$ used when calculating $\widetilde{l}_{n}(\cdot)$ and $ \widetilde{a}_{n}(\cdot,\cdot) $. In the following, we provide the proof for Theorem~\ref{main_result}, which shows that the exponential convergence still holds, if we choose the \revision{number of quadrature points large enough}.

%-------------------------------------------
\subsection{\revision{Stability of the quadrature rule}}
\label{sec:stability}
%-------------------------------------------
Positivity of the kernel function $(x,y) \mapsto |x-  y|^{-1-2 s }$ and the Gauss-Legendre/Gauss-Jacobi
weights as well as exactness of the  Gauss-Legendre/Gauss-Jacobi quadrature allow us to prove the 
following stability result: 
\begin{lemma}
\label{lemma:stability}
Let ${\mathcal T}_\gamma$ be a $\gamma$-shape regular mesh. Then, the following holds:
\begin{enumerate}[nosep, label=(\roman*),leftmargin=*]
\item 
\label{item:lemma:stability-i}
For all $ n \ge 1$ and all $T,T' \in {\mathcal T}_\gamma \cup \{\Omega^c\}$, we have $Q^n_{T,T'}(u,u) \ge 0$.
\item
\label{item:lemma:stability-ii}
Let $n \ge p$ and $u \in S^{p,1}_0({\mathcal T}_\gamma)$. Then
$\widetilde a_n(u,u) =  0$ implies $u = 0$. In particular, the stiffness matrix $A$ in (\ref{galerkin_approx_linear_system})
is symmetric positive definite. 
\end{enumerate}
Furthermore, there is $C_{coer} > 0$ depending only on $\gamma$ and $s$ such that for 
all $u \in S^{p,1}_0({\mathcal T}_\gamma)$ 
the following assertions hold: 
\begin{enumerate}[nosep, label=(\roman*),leftmargin=*]
\setcounter{enumi}{2}
\item 
\label{item:lemma:stability-iii}
(Identical elements)
For $n \ge p$ and $T \in {\mathcal T}_\gamma$: $Q^n_{T,T}(u,u)  = I_{T,T}(u,u)$. 
\item 
\label{item:lemma:stability-iv}
(Adjacent elements)
For $n \ge p+1$ and $(T,T') \in {\mathcal T}_\gamma \times ({\mathcal T}_\gamma \cup \{\Omega^c\})$ 
with $T \ne T'$ and $\overline{T} \cap \overline{T'} \ne \emptyset$: $Q^n_{T,T'}(u,u)  \ge C_{coer}I_{T,T'}(u,u) \ge 0$. 
\item 
\label{item:lemma:stability-v}
(Separated elements)
For $n \ge p+1$ and $(T,T') \in {\mathcal T}_\gamma \times ({\mathcal T}_\gamma \cup \{\Omega^c\})$ 
with $\overline{T} \cap \overline{T'} = \emptyset$: $Q^n_{T,T'}(u,u)  \ge C_{coer} I_{T,T'}(u,u) \ge 0$. 
\end{enumerate}
\end{lemma}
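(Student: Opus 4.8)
The plan is to establish the five items roughly in the order \ref{item:lemma:stability-i}, \ref{item:lemma:stability-iii}, \ref{item:lemma:stability-iv}/\ref{item:lemma:stability-v}, \ref{item:lemma:stability-ii}, using only the three structural facts recalled before the statement: positivity of the kernel $|x-y|^{-1-2s}$, positivity of the Gauss-Jacobi/Gauss-Legendre weights, and exactness of an $n$-point rule on polynomials of degree $\le 2n-1$. For \ref{item:lemma:stability-i} I would simply inspect each quadrature expression \eqref{case_idpanel}, \eqref{case_neighbored_quad}, the separated-element rule, and \eqref{case_complementary}--\eqref{quad_comp_schnitt2} with $v_N=w_N=u$: in every case the evaluated integrand is a squared factor $(\hat u(\cdot)-\hat u(\cdot))^2\ge 0$ (respectively $\hat u(\cdot)^2$ in the complement) times a strictly positive kernel value, and the weights are strictly positive, so $Q^n_{T,T'}(u,u)$ is a finite sum of non-negative numbers.

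For \ref{item:lemma:stability-iii} the point is exactness. As noted after \eqref{case_idpanel}, separating the weight $x^{2-2s}(1-y)^{1-2s}$ leaves a genuine polynomial: writing $\hat u_T(x)=\sum_{k=0}^p a_k x^k$, the identity $\hat u_T(x)-\hat u_T(xy)=x(1-y)\sum_{k=1}^p a_k x^{k-1}(1+y+\dots+y^{k-1})$ cancels the apparent singularity and exhibits the transformed integrand as the square of a polynomial of degree $p-1$ in each variable, hence of degree $2(p-1)$ in each of $x,y$. Both Gauss-Jacobi rules integrate it exactly once $2n-1\ge 2(p-1)$, i.e.\ $n\ge p$, giving $Q^n_{T,T}(u,u)=I_{T,T}(u,u)$.

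The heart of the matter is \ref{item:lemma:stability-iv}/\ref{item:lemma:stability-v}, and I expect the main obstacle to be the extraction of comparison constants depending on $\gamma,s$ only. The mechanism is uniform: exactly one tensor direction carries the singular kernel, while in the other the singularity is removable. In the adjacent case \eqref{case_neighbored}--\eqref{case_neighbored_quad}, continuity of $u\in S^{p,1}_0\subset H^1_0$ at the shared node makes $(\hat u_T(1-x)-\hat u_{T'}(xy))^2/x^2$ a polynomial of degree $\le 2p-2$ in $x$, so the $GJ^{0,2-2s}_{n,x}$ rule ($n\ge p$) evaluates exactly and returns, at each Gauss-Legendre node $\eta_j$, the value $P(\eta_j)$ of the non-negative polynomial $P(y):=\int_0^1(\hat u_T(1-x)-\hat u_{T'}(xy))^2 x^{-2s}\,dx$ of degree $\le 2p$. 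The outer sum is then a Gauss-Legendre rule for $P(y)/|h_T+yh_{T'}|^{1+2s}$. I would bound the kernel two-sidedly over $y\in[0,1]$ using $h_T\le h_T+yh_{T'}\le h_T+h_{T'}\le(1+1/\gamma)h_T$, which places it between $(1+1/\gamma)^{-(1+2s)}h_T^{-(1+2s)}$ and $h_T^{-(1+2s)}$; since $P\ge 0$ and Gauss-Legendre with $n\ge p+1$ integrates the degree-$2p$ polynomial $P$ exactly, replacing the kernel by its constant lower bound in the quadrature and by its upper bound in the integral yields $Q\ge(1+1/\gamma)^{-(1+2s)}I$ on this contribution. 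The second contribution and the complement rules \eqref{quad_comp_schnitt1}--\eqref{quad_comp_schnitt2} are symmetric, using $u(\pm1)=0$ to remove the boundary singularity in the Jacobi direction and $\operatorname{dist}_{T,\{1\}}\ge\gamma h_T$ (the neighbour of $T$ lies between) for the far kernel. The separated case is identical except that the kernel depends on both variables; the one genuinely new step is the geometric estimate $\operatorname{dist}_{T,T'}\ge\gamma\max(h_T,h_{T'})$ — the intervening elements touching $T$ and $T'$ have lengths $\ge\gamma h_T$ and $\ge\gamma h_{T'}$ — which bounds the kernel ratio over $[0,1]^2$ by $(1+2/\gamma)^{1+2s}$ and again lets Gauss-Legendre integrate $(\hat u_T(x)-\hat u_{T'}(y))^2$ exactly for $n\ge p+1$. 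Taking $C_{coer}$ as the smallest of these $\gamma,s$-constants finishes both items; the delicate part throughout is exactly this reduction of distance-to-size ratios to $\gamma$, together with the degree bookkeeping that separates the threshold $n\ge p$ of the identical panel from $n\ge p+1$ everywhere else.

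Finally \ref{item:lemma:stability-ii} assembles the pieces. Since $C(s)>0$ (because $\Gamma(-s)<0$ for $s\in(0,1)$) and every summand in \eqref{bilinear_aprox_definition} is non-negative by \ref{item:lemma:stability-i}, we have $\widetilde a_n(u,u)\ge 0$, and $\widetilde a_n(u,u)=0$ forces each diagonal term $Q^n_{T,T}(u,u)$ to vanish. For $n\ge p$, \ref{item:lemma:stability-iii} identifies $Q^n_{T,T}(u,u)$ with $I_{T,T}(u,u)=|u|^2_{H^s(T)}$, so $u$ is constant on every element; continuity of $u\in H^1_0$ across the nodes and $u(\pm1)=0$ then give $u\equiv 0$. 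Symmetry of $\widetilde a_n(\cdot,\cdot)$ is immediate from the $(v,w)$-symmetric product structure of each $Q^n_{T,T'}$, so the matrix is symmetric, and $x^\top A x=\widetilde a_n(u,u)$ for $u=\sum_i x_i\varphi_i$ upgrades non-negativity to positive definiteness.
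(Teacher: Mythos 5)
Your proposal is correct and follows essentially the same route as the paper's proof: positivity of kernel and weights for \ref{item:lemma:stability-i}, cancellation of the singularity plus exactness of degree $2p-2$ polynomials for \ref{item:lemma:stability-iii}, the sandwich argument (replace the kernel by its constant lower bound, use exactness, reinsert the kernel at the cost of a ratio bounded via shape regularity, e.g.\ $C_{coer}=(1+1/\gamma)^{-(1+2s)}$ and $\operatorname{dist}_{T,T'}\ge\gamma\max(h_T,h_{T'})$) for \ref{item:lemma:stability-iv} and \ref{item:lemma:stability-v}, and the chain \ref{item:lemma:stability-i}$\Rightarrow$\ref{item:lemma:stability-iii}$\Rightarrow$ elementwise vanishing $H^s$-seminorm for \ref{item:lemma:stability-ii}. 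Your only deviations are cosmetic --- reducing the adjacent case to a one-dimensional Gauss--Legendre rule for the polynomial $P(y)$ by first applying exactness of the inner Gauss--Jacobi rule (the paper performs the same comparison directly on the tensor rule), and spelling out the mixed $n\ge p$/$n\ge p+1$ degree bookkeeping, which the paper records in Remark~\ref{remk:sharper-quadrature-rule}.
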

\begin{proof}

\emph{Proof of \ref{item:lemma:stability-i}:} This follows from the positivity of the kernel and the Gauss-Legendre/Gauss-Jacobi weights. 

\emph{Proof of \ref{item:lemma:stability-ii}:} From \ref{item:lemma:stability-i}, we get for $u \in S^{p,1}_0({\mathcal T}_\gamma)$ with $ \widetilde{a}_n(u,u) = 0 $
\begin{align*}
0 & = \frac{2}{C(s)} \widetilde{a}_n(u,u)  = \sum_{T \in {\mathcal T}_\gamma \cup \{\Omega^c\} } \sum_{T' \in {\mathcal T}_\gamma \cup \{\Omega^c\}} Q^n_{T,T'}(u,u) 
\stackrel{\ref{item:lemma:stability-i}}{\ge} \sum_{T \in {\mathcal T}_\gamma} Q^n_{T,T} (u,u) 
\stackrel{\ref{item:lemma:stability-iii}}{=} \sum_{T \in {\mathcal T}_\gamma} I_{T,T} (u,u)  \ge 0. 
\end{align*}
Hence, $|u|_{H^s(T)} = 0$ for each $T \in {\mathcal T}_\gamma$ so that $u$ is constant on each element. By continuity of $u$, 
it is constant on $\Omega$,  and the boundary conditions then imply $u = 0$.

\emph{Proof of \ref{item:lemma:stability-iii}:} For  $n \ge p$, the univariate Gauss-Jacobi quadrature in (\ref{case_idpanel}) 
is exact for polynomials of degree $2p-1$. Inspection of (\ref{case_idpanel}) shows that the argument is the square of a polynomial
of degree $p-1$ in each variable. 

\emph{Proof of \ref{item:lemma:stability-iv}:} For $n \ge p+1$, the univariate Gauss-Jacobi quadratures in 
(\ref{case_neighbored_quad}) are exact for polynomials of degree $2p+1$. We study the cases 
$(T,T') \in {\mathcal T}_\gamma \times {\mathcal T}_\gamma$ and 
$(T,T') \in {\mathcal T}_\gamma \times \{\Omega^c\}$ separately, starting with 
$(T,T') \in {\mathcal T}_\gamma \times {\mathcal T}_\gamma$.  
We only consider the first term  in
(\ref{case_neighbored_quad}), the other one being handled analogously. Let $u \in S^{p,1}_0({\mathcal T}_\gamma)$. 
For the pull-backs $\hat u_T$, $\hat u_{T'}$ to the reference element $\widehat{T}$ of the functions $u|_T$, $u|_{T'}$, we get by continuity of $u$ 
at $\overline{T} \cap \overline{T'}$ that $\hat u_T(1) = \hat u_{T'}(0)$. Hence, 
\begin{equation*}
U(x,y):= \frac{\hat u_T(1-x) - \hat u_{T'}(xy)}{x} 
\end{equation*}
is a polynomial of degree $p-1$ in $x$ and of degree $p$ in $y$. 
Using the positivity of the quadrature weights and the exactness of the quadrature rules ($U^2$ is a polynomial of degree 
$2p \leq 2p+1$ in each variable) 
\begin{align*}
&\hspace{-10mm} h_T h_{T'} GJ^{0,2-2s}_{n,x}\circ GL_{n,y} \left(\frac{(\hat u_T(1-x) - \hat u_{T'}(xy))^2}{x^2 (h_T + y h_{T'})^{1+2s}}\right)  \\
 &\qquad=h_T h_{T'}
GJ^{0,2-2s}_{n,x}\circ GL_{n,y} \left( U^2(x,y) (h_T + y h_{T'})^{-(1+2s)}\right)  \\
&\qquad \ge 
h_T h_{T'} GJ^{0,2-2s}_{n,x}\circ GL_{n,y} \left( U^2(x,y) (h_T + h_{T'})^{-(1+2s)}\right) \\
&\qquad= 
h_T h_{T'} \int_{x \in \widehat{T}} 
\int_{y \in \widehat{T}} U^2(x,y) (h_T+h_{T'})^{-(1+2s)} x^{2-2s}\, dy\,dx  \\
&\qquad\ge (1 + h_{T'}/h_T)^{-(1+2s)} h_T h_{T'}
\int_{x \in \widehat{T}} 
\int_{y \in \widehat{T}} U^2(x,y) (h_T+y h_{T'})^{-(1+2s)} x^{2-2s}\, dy\,dx,  
\end{align*}
where we used in the last inequality that $ h_{T}^{-(1+2s)} \ge (h_T+y h_{T'})^{-(1+2s)}$.
We conclude  in view of (\ref{case_neighbored})
\begin{align*}
Q^{n}_{T,T'}(u,u) \ge C_{coer} \, I_{T,T'}(u,u), 
\end{align*}
where $C_{coer}:= \inf \{ (1 + h_{T'}/h_T)^{-(1+2s)}\,|\, T \in {\mathcal T}_\gamma, T' \mbox{ adjacent to $T$}\}$ depends only the shape regularity constant $\gamma$ and $s$.
The case $(T,T') \in {\mathcal T}_\gamma \times \{\Omega^c\}$ leads to two terms of the form \eqref{quad_comp_schnitt1} or \eqref{quad_comp_schnitt2}. One term can always be analyzed in similar fashion as above and the other one can be treated as in the following case \ref{item:lemma:stability-v}.

\emph{Proof of  \ref{item:lemma:stability-v}:} This is handled similarly to the case of adjacent elements 
in \ref{item:lemma:stability-iv}. We consider only the case $(T,T') \in {\mathcal T}_\gamma \times {\mathcal T}_\gamma$, 
the case 
$(T,T') \in {\mathcal T}_\gamma \times \{\Omega^c\}$ is handled similarly. 

With $\hat u_T$, $\hat u_{T'}$ as above and using that polynomials of degree $2p+1$ are integrated 
exactly for $n \ge p+1$ we estimate 
\begin{align*}
Q^n_{T,T'}(u,u)  &= 
h_T h_{T'} GL_{n,x} \circ GL_{n,y} \left((\hat u_T(x) - \hat u_{T'}(y))^2 ((1-x) h_T + \operatorname{dist}_{T,T'} + y h_{T'})^{-(1+2s)}\right) \\
& \ge h_T h_{T'} GL_{n,x} \circ GL_{n,y} \left((\hat u_T(x) - \hat u_{T'}(y))^2 (h_T + \operatorname{dist}_{T,T'} + h_{T'})^{-(1+2s)}\right) \\
& =  h_T h_{T'}
\int_{x \in \widehat{T}} \int_{y \in \widehat{T}}  (\hat u_T(x) - \hat u_{T'}(y))^2 (h_T+\operatorname{dist}_{T,T'} +h_{T'})^{-(1+2s)}\, dy\, dx \\
& \ge  
\left(\frac{\operatorname{dist}_{T,T'}}{h_T + \operatorname{dist}_{T,T'} + h_{T'}}\right)^{1+2s}  h_T h_{T'}
\int_{x \in \widehat{T}} \int_{y \in \widehat{T}}  \dfrac{(\hat u_T(x) - \hat u_{T'}(y))^2}{((1-x) h_T + \operatorname{dist}_{T,T'} \revision{+} y h_{T'})^{1+2s}} \, dy\, dx \\ 
& \ge C_{coer} I_{T,T'}(u,u), 
\end{align*}
for a $C_{coer}>0$ that depends solely on the shape regularity constant $\gamma$ and $s$. 
\end{proof}
\begin{remark}
\label{remk:sharper-quadrature-rule}
The proof shows that the condition $n \ge p+1$ for the case of adjacent elements could be weakened in that $p$ points suffice in one variable whereas 
$p+1$ point should be used in the other one. 
\eremk
\end{remark}
\begin{corollary}
\label{cor:stability}
Let ${\mathcal T}_\gamma$ be a $\gamma$-shape regular mesh. There is $c_{coer} > 0$ depending only on $\gamma$ and $s$
such that for $n \ge p+1$ 
\begin{align}
\label{eq:cor:stability-10}
\widetilde{a}_n(u,u) & \ge c_{coer}\|u\|^2_{\widetilde{H}^s(\Omega)} \quad \forall u \in S^{p,1}_0(\mathcal{T}_\gamma). 
\end{align}
\end{corollary}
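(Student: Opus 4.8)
The plan is to bound $\widetilde a_n(u,u)$ from below by a fixed multiple of the exact energy $a(u,u)$, after which the assertion follows from the known coercivity of $a$ on $\widetilde H^s(\Omega)$. Since $C(s) > 0$ for $s \in (0,1)$, multiplying by $2/C(s)$ preserves inequalities, so I would work with $\frac{2}{C(s)}\widetilde a_n(u,u)$ expanded via \eqref{bilinear_aprox_definition} as the double sum $\sum_{T,T'} Q^n_{T,T'}(u,u)$ over $T,T'\in{\mathcal T}_\gamma$ plus the complement contribution $2\sum_T Q^n_{T,\Omega^c}(u,u)$.

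Next I would apply Lemma~\ref{lemma:stability} term by term, using $n \ge p+1$. The diagonal pairs $T = T'$ are handled by \ref{item:lemma:stability-iii}, giving the identity $Q^n_{T,T}(u,u) = I_{T,T}(u,u)$. Every off-diagonal pair $(T,T')$ with $T,T'\in{\mathcal T}_\gamma$, as well as each complement pair $(T,\Omega^c)$, falls into exactly one of the adjacent or separated cases \ref{item:lemma:stability-iv}, \ref{item:lemma:stability-v}, which give $Q^n_{T,T'}(u,u) \ge C_{coer} I_{T,T'}(u,u) \ge 0$; here I would note that $\overline{\Omega^c}$ meets only the two boundary elements, so a complement pair is adjacent precisely when $T$ touches $\partial\Omega$ and separated otherwise. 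Replacing $C_{coer}$ by $\min\{C_{coer},1\}$ (and using $I_{T,T}(u,u)\ge 0$) turns the diagonal identity into a lower bound of the same form, so that termwise $Q^n_{T,T'}(u,u) \ge C_{coer} I_{T,T'}(u,u)$ for all pairs, including the complement.

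Summing these bounds and comparing with the analogous decomposition of the exact form gives $\frac{2}{C(s)}\widetilde a_n(u,u) \ge C_{coer}\big(\sum_{T,T'} I_{T,T'}(u,u) + 2\sum_T I_{T,\Omega^c}(u,u)\big) = C_{coer}\,\frac{2}{C(s)}\, a(u,u)$, i.e.\ $\widetilde a_n(u,u) \ge C_{coer}\, a(u,u)$. Finally, since $a(u,u) = \frac{C(s)}{2}|u|^2_{H^s({\mathbb R})}$ and the seminorm $|\cdot|_{H^s({\mathbb R})}$ is equivalent to $\|\cdot\|_{\widetilde H^s(\Omega)}$ on $\widetilde H^s(\Omega)$, I would obtain \eqref{eq:cor:stability-10} with a $c_{coer}$ depending only on $\gamma$ and $s$. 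No genuine obstacle remains, as the entire analytic difficulty is already encapsulated in Lemma~\ref{lemma:stability}; the only points needing care are the $C_{coer}\le 1$ normalization, the inclusion of $\Omega^c$ in the adjacent/separated dichotomy, and the positivity of $C(s)$.
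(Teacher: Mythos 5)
Your proposal is correct and follows essentially the same route as the paper's own (two-line) proof: apply Lemma~\ref{lemma:stability} termwise to the decomposition \eqref{bilinear_aprox_definition} to get $\widetilde a_n(u,u) \ge C_{coer}\,a(u,u)$, then invoke coercivity of $a$ on $\widetilde H^s(\Omega)$. The details you supply --- the $\min\{C_{coer},1\}$ normalization for the diagonal terms, the adjacent/separated dichotomy for the pairs $(T,\Omega^c)$, and the positivity of $C(s)$ --- are exactly the points the paper leaves implicit.
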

\begin{proof} 
We write 
\begin{align*}
\widetilde{a}_n(u,u)  = \frac{C(s)}{2} \sum_{T \in {\mathcal T}_\gamma \cup \{\Omega^c\} } \sum_{T' \in {\mathcal T}_\gamma \cup \{\Omega^c\}} Q^n_{T,T'}(u,u) 
\end{align*}
and use Lemma~\ref{lemma:stability} to bound $Q^n_{T,T'}(u,u) \ge C_{coer} I^n_{T,T'}(u,u)$ for a $C _{coer} > 0$ depending only on $\gamma$ and $s$. 
\end{proof}
\begin{remark}
\label{remk:number-quadrature-points}
%\begin{enumerate}[nosep, label={\bf Case \Alph*:},leftmargin=*]

\begin{enumerate*}[label=(\roman*)]
\item 
Lemma~\ref{lemma:stability} shows that it suffices to use $n \ge p$ quadrature points for the quadrature $Q^n_{T,T}$ to ensure solvability
of the linear system (\ref{galerkin_approx_linear_system}). The condition $n \ge p+1$ stipulated in Corollary~\ref{cor:stability} leads to uniform 
(in $p$ and $\mathcal{T}_\gamma$)
coercivity. 
\item 
Remark~\ref{remk:sharper-quadrature-rule} shows that for adjacent elements a ``mixed'' quadrature order could be employed to slightly
reduce the number of quadrature points. 
\item 
The stability result Corollary~\ref{cor:stability} exploits positivity of the kernel and weights as well as a certain exactness property
of the Gauss-Legendre/Gauss-Jacobi quadratures. One can avoid exploiting these properties and rely on 
a perturbation argument that uses consistency error estimates for the quadratures and the coercivity of the continuous bilinear form
$a(\cdot,\cdot)$. This approach, which results in 
the stronger requirement $n \ge p + O(\log ((p+1) (\#{\mathcal T}_\gamma +1)))$ is discussed in Lemma~\ref{lemma:uniform_coercivity} below. 
\eremk
\end{enumerate*}
\end{remark}
%--------------------------------------------------------------
\section{Proof of Theorem~\ref{main_result}} \label{ch:proof}
%--------------------------------------------------------------
The proof is based on the classical Strang Lemma, see, e.g., \cite[Chap.~3]{braess2007finite}. In the present setting, it takes the following form: 

\begin{lemma}[First Strang Lemma]
\label{lemma:strang}
Let $\mathcal{T}$ be a mesh on $\Omega$ and let $\widetilde{\alpha}_n > 0$ be such that $\widetilde{a}_n$ satisfies 
	\begin{align}
		\widetilde{\alpha}_n \|v_{N}\|^{2}_{\widetilde{H}^{s}(\Omega)} \le \widetilde{a}_{n}(v_{N},v_{N}) ~~~~\text{for all } v_{N} \in S^{p,1}_0({\mathcal{T}}).
	\end{align} 
Then, with the continuity constant $C_a$ of the bilinear form $a$, the difference $u - \widetilde{u}_{N,n}$ between the solutions
	$u \in \widetilde{H}^{s}(\Omega)$ of \eqref{weak_formulation} and $\widetilde{u}_{N,n} \in S^{p,1}_0(\mathcal{T})$ of \eqref{eq:fully_discrete} 
satisfies 
	\begin{align*}
		\|u-\widetilde{u}_{N,n}\|_{\widetilde{H}^{s}(\Omega)} \le&~ \left(1 + \frac{C_a}{\widetilde{\alpha}_n}\right) \bigg( \inf\limits_{v\in S^{p,1}_0(\mathcal{T})} \bigg( \|u-v\|_{\widetilde{H}^{s}(\Omega)} + \sup\limits_{w \in S^{p,1}_0(\mathcal{T})} \dfrac{|a(v,w)-\widetilde{a}_{n}(v,w)|}{\|w\|_{\widetilde{H}^{s}(\Omega)}} \bigg) \notag \\
		&\quad~+ \sup\limits_{w \in S^{p,1}_0(\mathcal{T})}\dfrac{|l(w)-\widetilde{l}_{n}(w)|}{\|w\|_{\widetilde{H}^{s}(\Omega)}}  \bigg). 
	\end{align*}
\end{lemma}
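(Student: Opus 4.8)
The plan is to follow the classical First Strang argument, in which the total error is split by the triangle inequality into a best-approximation contribution and a purely discrete contribution, the latter being controlled through the assumed coercivity of $\widetilde a_n$ together with the defining relation \eqref{eq:fully_discrete} of $\widetilde u_{N,n}$. Concretely, I would fix an arbitrary $v \in S^{p,1}_0(\mathcal{T})$, set $w := v - \widetilde u_{N,n} \in S^{p,1}_0(\mathcal{T})$, and start from
\begin{align*}
\|u - \widetilde u_{N,n}\|_{\widetilde H^s(\Omega)} \le \|u - v\|_{\widetilde H^s(\Omega)} + \|w\|_{\widetilde H^s(\Omega)}.
\end{align*}

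For the discrete contribution $\|w\|$ I would invoke the hypothesis $\widetilde\alpha_n \|w\|^2 \le \widetilde a_n(w,w)$ and expand $\widetilde a_n(w,w) = \widetilde a_n(v,w) - \widetilde a_n(\widetilde u_{N,n}, w)$. By \eqref{eq:fully_discrete} the second summand equals $\widetilde l_n(w)$, so that $\widetilde a_n(w,w) = \widetilde a_n(v,w) - \widetilde l_n(w)$. The key step is then to insert the exact forms and split
\begin{align*}
\widetilde a_n(v,w) - \widetilde l_n(w) = \big(\widetilde a_n(v,w) - a(v,w)\big) + \big(a(v,w) - l(w)\big) + \big(l(w) - \widetilde l_n(w)\big).
\end{align*}
The middle bracket is rewritten using that $u$ solves \eqref{weak_formulation} and that $w \in S^{p,1}_0(\mathcal{T}) \subset \widetilde H^s(\Omega)$ is an admissible test function, giving $a(v,w) - l(w) = a(v - u, w)$, which continuity of $a$ bounds by $C_a \|u - v\|_{\widetilde H^s(\Omega)}\,\|w\|_{\widetilde H^s(\Omega)}$. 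Dividing by $\|w\|$ (the case $w=0$ being trivial) and majorising the first and third brackets by their suprema over $S^{p,1}_0(\mathcal{T})$ turns this into a bound for $\widetilde\alpha_n \|w\|$.

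Combining the two estimates then yields, for every $v \in S^{p,1}_0(\mathcal{T})$,
\begin{align*}
\|u - \widetilde u_{N,n}\|_{\widetilde H^s(\Omega)} \le \Big(1 + \frac{C_a}{\widetilde\alpha_n}\Big)\|u - v\|_{\widetilde H^s(\Omega)} + \frac{1}{\widetilde\alpha_n}\Big(\sup_{w \in S^{p,1}_0(\mathcal{T})}\frac{|a(v,w) - \widetilde a_n(v,w)|}{\|w\|_{\widetilde H^s(\Omega)}} + \sup_{w \in S^{p,1}_0(\mathcal{T})}\frac{|l(w) - \widetilde l_n(w)|}{\|w\|_{\widetilde H^s(\Omega)}}\Big).
\end{align*}
Taking the infimum over $v$ (observing that the $a$-consistency term depends on $v$ while the $l$-consistency term does not) and consolidating the prefactors $1$, $C_a/\widetilde\alpha_n$ and $\widetilde\alpha_n^{-1}$ into the common constant $1 + C_a/\widetilde\alpha_n$ reproduces the stated inequality. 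I do not anticipate a substantive obstacle, since this is an abstract perturbation estimate; the only points requiring care are using the \emph{discrete} equation \eqref{eq:fully_discrete} (and not the continuous one) when eliminating $\widetilde a_n(\widetilde u_{N,n}, w)$, correctly identifying $a(v,w) - l(w) = a(v-u,w)$ via the continuous problem \eqref{weak_formulation}, and the final bookkeeping that absorbs the factor $\widetilde\alpha_n^{-1}$ of the consistency terms into the single prefactor $1 + C_a/\widetilde\alpha_n$.
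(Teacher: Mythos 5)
Your proof is correct and coincides with the classical First Strang argument that the paper itself invokes without proof (Lemma~\ref{lemma:strang} is stated with reference to \cite[Chap.~3]{braess2007finite}): applying the coercivity hypothesis to $w = v - \widetilde{u}_{N,n}$, eliminating $\widetilde{a}_n(\widetilde{u}_{N,n},w)$ via the discrete equation \eqref{eq:fully_discrete}, performing the three-term splitting, and using $a(v,w)-l(w)=a(v-u,w)$ from \eqref{weak_formulation} are exactly the standard steps. The only blemish is the final bookkeeping: your derivation produces the factor $\widetilde{\alpha}_n^{-1}$ in front of the two consistency terms, and absorbing it into $1+C_a/\widetilde{\alpha}_n$ tacitly assumes $\widetilde{\alpha}_n + C_a \ge 1$ --- harmless, since a continuity constant may always be enlarged so that $C_a \ge 1$ and the paper only uses a generic constant downstream in \eqref{has_to_be_bounded}, but as written your argument literally yields the constant $\max\bigl\{\widetilde{\alpha}_n^{-1},\, 1 + C_a/\widetilde{\alpha}_n\bigr\}$.
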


Lemma~\ref{lemma:strang} indicates that we have to show lower bounds for the coercivity of $\widetilde{a}_n(\cdot,\cdot)$
as well as derive bounds for the consistency errors  $|l(w)-\widetilde{l}_{n}(w)| $ and $|a(v,w)-\widetilde{a}_{n}(v,w)|$. This is the subject of the following two lemmas, whose proofs are postponed to Section~\ref{ch:consistency_errors}.  
\begin{lemma}[Consistency error for $l$]
\label{consistency_error_lemma_functional}
Let $f$ be analytic in $\overline{\Omega}$, and let $\mathcal{T}_{\gamma}$ be a $\gamma$-shape regular mesh. 
Let $ l(v) = \skp{f,v}_{L^{2}(\Omega)}$ and let its approximation
 $ \widetilde{l}_{n}(\cdot)$ be defined by \eqref{functional_aprox_definition}.
Then, there exists a constant $\rho > 1$ depending only on $f$ such that
	\begin{align}
	|l(v)-\widetilde{l}_{n}(v)| \le C_{s,f} \rho^{p-2n+1}p\|v\|_{\widetilde{H}^{s}(\Omega)} & \qquad \text{for all } ~ v \in S_{0}^{p,1}(\mathcal{T}_{\gamma}),
	\end{align}
where $C_{s,f} > 0$ is a constant  that depends only on $s$ and $f$.
\end{lemma}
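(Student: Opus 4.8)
The plan is to reduce the global consistency error to a sum of local Gauss--Legendre quadrature errors on the reference element and to control each of these by the classical quadrature error estimate for analytic integrands. Starting from the definition \eqref{functional_aprox_definition}, I would write
\[
l(v) - \widetilde{l}_n(v) = \sum_{T \in \mathcal{T}_\gamma} h_T \left( \int_{\widehat{T}} \hat{v}_{N,T}(x)\, \hat{f}_T(x)\,dx - GL_n\bigl(\hat{v}_{N,T}\hat{f}_T\bigr)\right),
\]
so that it suffices to estimate the element-wise quadrature error of the integrand $\hat{v}_{N,T}\hat{f}_T$, which is the product of a polynomial of degree $p$ and the pullback $\hat f_T = f\circ F_T$ of the analytic function $f$.

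The first substantial step is to exhibit a Bernstein ellipse $E_\rho$ (with foci at the endpoints of $\widehat{T}=(0,1)$ and parameter $\rho>1$) in which $\hat{f}_T$ is analytic, with $\rho$ uniform over all elements. Since $f$ is analytic on $\overline{\Omega}$, it extends holomorphically to a fixed complex neighbourhood of $[-1,1]$ of width $\delta>0$. Because $F_T(x)=x_\ell + x\,h_T$ with $h_T\le|\Omega|=2$, the analyticity region of $\hat{f}_T$ in the reference variable has width at least $\delta/h_T\ge\delta/2$; hence it contains a Bernstein ellipse $E_\rho$ with some $\rho>1$ depending only on $f$ (through $\delta$). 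Crucially, smaller elements only enlarge this region, so the binding constraint comes from the large elements and $\rho$ is genuinely uniform; moreover $\hat{f}_T$ is bounded on $E_\rho$ by a constant depending only on $f$.

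Next I would invoke the standard estimate that, for an integrand analytic in $E_\rho$ and bounded there by $M$, the $n$-point Gauss--Legendre error is $\lesssim\rho^{-2n}M$ (equivalently, exactness for degree $2n-1$ combined with best-approximation estimates for analytic functions). Writing $M=\max_{E_\rho}|\hat{v}_{N,T}\hat{f}_T|$ and using the Bernstein growth bound that a degree-$p$ polynomial grows at most like $\rho^p$ on $E_\rho$ relative to its supremum on $\widehat{T}$, I obtain
\[
\left| \int_{\widehat{T}} \hat{v}_{N,T}\hat{f}_T - GL_n(\hat{v}_{N,T}\hat{f}_T)\right| \le C_f\,\rho^{p-2n}\,\|\hat{v}_{N,T}\|_{L^\infty(\widehat{T})}.
\]
A Nikolskii-type inverse inequality on the reference interval then gives $\|\hat{v}_{N,T}\|_{L^\infty(\widehat{T})}\le C\,p\,\|\hat{v}_{N,T}\|_{L^2(\widehat{T})}=C\,p\,h_T^{-1/2}\|v\|_{L^2(T)}$.

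Finally, I would sum over elements with the $h_T$-weights, so that $h_T\|\hat{v}_{N,T}\|_{L^\infty(\widehat{T})}\le C\,p\,h_T^{1/2}\|v\|_{L^2(T)}$, and apply Cauchy--Schwarz together with $\sum_T h_T=|\Omega|$ to conclude
\[
|l(v)-\widetilde{l}_n(v)| \le C_{s,f}\,\rho^{p-2n+1}\,p\,\|v\|_{L^2(\Omega)} \le C_{s,f}\,\rho^{p-2n+1}\,p\,\|v\|_{\widetilde{H}^s(\Omega)},
\]
using $\|v\|_{L^2(\Omega)}\le\|v\|_{\widetilde{H}^s(\Omega)}$. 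The main obstacle is the second step: one must verify carefully that the Bernstein ellipse parameter $\rho$ can be chosen independently of $T$ (in particular, independently of the small elements near the boundary) and that the $\rho^p$ growth of the polynomial factor on $E_\rho$ is governed by the same $\rho$ as the quadrature decay $\rho^{-2n}$, so that the two effects combine into the clean factor $\rho^{p-2n}$; everything else is bookkeeping with inverse estimates and Cauchy--Schwarz.
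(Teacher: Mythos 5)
Your proposal is correct and follows essentially the same path as the paper's proof: elementwise splitting, holomorphic extension of $\hat f_T$ to a Bernstein ellipse with $\rho$ uniform in $T$ (the paper's Lemma~\ref{lemma_quadabsch} combined with Proposition~\ref{proposition_best_approx}), a $\rho^p$ growth bound for the polynomial factor, an inverse estimate producing the factor $p\,h_T^{-1/2}$, and Cauchy--Schwarz with $\sum_T h_T = |\Omega|$. The only cosmetic difference is that you bound $\|\hat v_{N,T}\|_{L^\infty(\widehat T)}$ by $C\,p\,h_T^{-1/2}\|v\|_{L^2(T)}$ via a Nikolskii inequality, while the paper uses its estimate \eqref{norm_ung5} to land on $\|v\|_{H^s(T)}$; both yield the stated bound.
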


\begin{lemma}[Consistency error for $a$]
\label{consistency_error_lemma_bilinear}
 Let  $\mathcal{T}_{\gamma}$ be a $\gamma$-shape regular mesh, $ a(\cdot,\cdot) $ be the bilinear form of \eqref{weak_formulation} and $ \widetilde{a}_{n}(\cdot,\cdot) $ be its approximation \eqref{bilinear_aprox_definition}. Then, there exists a constant $\rho > 1$ that depends only on the shape regularity constant $\gamma$  such that for all $u \in S_{0}^{r,1}(\mathcal{T}_{\gamma}) $ and $v \in S_{0}^{p,1}(\mathcal{T}_{\gamma})$ there holds 
	\begin{align}
		|a(u,v)-\widetilde{a}_{n}(u,v)| \le C_{s,\gamma} (\# \mathcal{T}_{\gamma})^{2} \rho^{r+p-2n+1} r^{3} p^{3} \|u\|_{\widetilde{H}^{s}(\Omega)} \|v\|_{\widetilde{H}^{s}(\Omega)},
	\end{align}
	where $ C_{s,\gamma}$ is a constant that depends only on $\gamma$ and $s$.
\end{lemma}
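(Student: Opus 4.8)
The plan is to exploit the additive structure of both forms: subtracting the decomposition \eqref{bilinear_aprox_definition} from the analogous decomposition of $a(\cdot,\cdot)$, the consistency error is controlled by the sum of the per-pair quadrature errors $|I_{T,T'}(u,v) - Q^n_{T,T'}(u,v)|$ over all $T,T' \in {\mathcal T}_\gamma$, together with the boundary contributions $|I_{T,\Omega^c}(u,v) - Q^n_{T,\Omega^c}(u,v)|$. Since there are $\mathcal{O}((\#{\mathcal T}_\gamma)^2)$ pairs, it suffices to prove a single per-pair bound of the form $C_{s,\gamma}\,\rho^{r+p-2n+1} r^3 p^3 \|u\|_{\widetilde H^s(\Omega)}\|v\|_{\widetilde H^s(\Omega)}$ that is uniform over the four geometric configurations (identical, adjacent, separated, and complement), and then sum, bounding each local norm by the global one; this crude summation is what produces the factor $(\#{\mathcal T}_\gamma)^2$.

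The core analytic tool I would invoke is the standard error estimate for $n$-point Gauss-Legendre and Gauss-Jacobi quadrature applied to an integrand that extends holomorphically to a Bernstein ellipse $\mathcal{E}_\rho \supset [0,1]$ (with foci $0,1$ and semi-axis sum parametrized by $\rho > 1$): the quadrature error is bounded by $C\,\rho^{-2n}\max_{\mathcal{E}_\rho}|\cdot|$ of the weight-free integrand, and the tensor-product error splits into the two coordinate directions in the usual way. After the affine and Duffy transformations recorded in \eqref{case_idpanel}, \eqref{case_neighbored}, and the separated/complement cases, each integrand is the product of (i) a genuine polynomial — the quotients such as $(\hat u_T(1-x) - \hat u_{T'}(xy))/x$ are polynomial because $u$ is continuous across $\overline T \cap \overline{T'}$ and the numerator vanishes on the appropriate coordinate line, exactly as used in the proof of Lemma~\ref{lemma:stability}~\ref{item:lemma:stability-iv} — of degree $\mathcal{O}(r)$ in the $u$-factor and $\mathcal{O}(p)$ in the $v$-factor, and (ii) an analytic non-polynomial factor, namely the remaining smooth denominator $(h_T + y h_{T'})^{-(1+2s)}$, $(x h_T + h_{T'})^{-(1+2s)}$, or $((1-x)h_T + \operatorname{dist}_{T,T'} + y h_{T'})^{-(1+2s)}$. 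The singularities of these denominators lie off $[0,1]$, at $y = -h_T/h_{T'}$, etc.; by $\gamma$-shape regularity the ratios $h_T/h_{T'}$ are bounded above and below and $\operatorname{dist}_{T,T'}/(h_T + h_{T'})$ is bounded below (for a pair separated by a single element this uses $h_{T''}\ge \gamma\max(h_T,h_{T'})$), so every singularity sits at distance $\gtrsim \gamma$ from $[0,1]$. This fixes a single $\rho > 1$ depending only on $\gamma$ for which the integrands are holomorphic and bounded on $\mathcal{E}_\rho$.

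It then remains to estimate $\max_{\mathcal{E}_\rho}$ of each integrand and to convert the resulting local bounds into the global norm. A polynomial of degree $d$ grows like $\rho^{d}$ on $\mathcal{E}_\rho$, so the two polynomial factors contribute $\rho^{r}$ and $\rho^{p}$; multiplied by the quadrature decay $\rho^{-2n}$ this yields the announced $\rho^{r+p-2n+1}$ (the $+1$ is harmless bookkeeping), while the analytic denominators are bounded uniformly using the distance estimate and contribute only to $C_{s,\gamma}$. To pass from the $L^\infty$-on-$\mathcal{E}_\rho$ bounds (equivalently, from the Legendre/Chebyshev coefficients of the local polynomials) back to $\|u\|_{\widetilde H^s(\Omega)}\|v\|_{\widetilde H^s(\Omega)}$ I would use polynomial inverse estimates on the reference element together with the norm equivalence $\|\cdot\|_{\widetilde H^s(\Omega)}\sim |\cdot|_{H^s({\mathbb R})}$; this is the step that generates the algebraic prefactors $r^3$ and $p^3$.

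The main obstacle I anticipate is the identical- and adjacent-element cases: there the Duffy transformation is indispensable, and one must verify carefully that after separating the Gauss-Jacobi weights the remaining integrand is genuinely holomorphic on a $\gamma$-controlled Bernstein ellipse and track its growth with explicit dependence on $r$ and $p$ in each of the two tensor directions (Gauss-Jacobi in one, Gauss-Legendre in the other). Equally delicate is making the norm-conversion step quantitative, i.e.\ obtaining the inverse estimates relating $\max_{\mathcal{E}_\rho}$ of the transformed local polynomials to the fractional-order norm with sharp polynomial powers; keeping these powers at $r^3 p^3$ rather than something larger requires care. The separated and complement cases are technically lighter since no singularity survives on $[0,1]^2$, but they must still be checked to fit the same Bernstein-ellipse framework, with $\operatorname{dist}_{T,T'}$ entering the analyticity radius through shape regularity.
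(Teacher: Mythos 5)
Your overall architecture coincides with the paper's: both split the consistency error into per-pair contributions $|I_{T,T'}-Q^n_{T,T'}|$ plus complement terms, prove a single per-pair bound (this is the paper's Lemma~\ref{lemma_quadrature_error}), and sum crudely over all $\mathcal{O}((\#\mathcal{T}_\gamma)^2)$ pairs, exactly as in \eqref{proof_of_consistency_error_lemma_bilinear}. For identical and adjacent pairs your plan also matches the paper in substance: removable singularities after the Duffy transform, holomorphy of the remaining denominator on a $\gamma$-controlled Bernstein ellipse, Bernstein growth $\rho^{r+p}$ of the polynomial factors against the $\rho^{-2n}$ quadrature decay, and inverse estimates producing $r^3p^3$ (the paper's Lemma~\ref{lemma_norm_approx}); the paper additionally notes that for $T=T'$ the weight-free integrand is a polynomial of degree $\le r+p-1$, so that case is integrated exactly once $n\ge\max(r,p)$.

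There is, however, a genuine gap in your treatment of separated pairs (and the analogous complement terms): the norm-conversion step you describe --- inverse estimates back to $\|u\|_{\widetilde H^s(\Omega)}$ and ``bounding each local norm by the global one'' --- does not yield a mesh-size-independent constant. Track the scalings: the transformed integral carries the factor $h_T h_{T'}$, the kernel supremum contributes $\operatorname{dist}_{T,T'}^{-1-2s}$, and converting $L^\infty$-on-ellipse bounds of the pullbacks into full local norms via an estimate of the type \eqref{norm_ung5} costs $h_T^{-1/2}h_{T'}^{-1/2}$; what remains is $h_T^{1/2}h_{T'}^{1/2}\operatorname{dist}_{T,T'}^{-1-2s}\le \gamma^{-1}\operatorname{dist}_{T,T'}^{-2s}$, which blows up as $\operatorname{dist}_{T,T'}\to 0$. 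This is fatal precisely in the intended application, since on the geometric meshes of Theorem~\ref{main_result} the distance between separated elements near the boundary is exponentially small in $L$. The missing idea is to subtract the mean $\overline{v_{\operatorname*{co}(T,T')}}$ over the convex hull from both polynomial factors and invoke the fractional Poincar\'e inequality \cite[Prop.~2.2]{heuer2014equivalence} (respectively \eqref{norm_ung4} for the complement case), which replaces full norms by seminorms carrying the scaling factor $h_{\operatorname*{co}(T,T')}^{2s}$; shape regularity then gives the uniform cancellation $\operatorname{dist}_{T,T'}^{-1-2s}(h_T+h_{T'})\,h^{2s}_{\operatorname*{co}(T,T')}\le (2/\gamma)(1+2/\gamma)^{2s}$, cf.\ \eqref{zwischen3} and the computation following it. Note also that the paper organizes this case through a different mechanism than your sup-on-ellipse bound: it applies the hybrid estimate \cite[Lem.~4.6]{melenk1998hp}, bounding the quadrature error by the best approximation of the smooth kernel alone in degree $r_c=2n-p-r-1$ times $L^2$-norms of the polynomial factors \eqref{zwischen2}; this delivers the same exponential factor $\rho^{r+p-2n+1}$ with only a $p^2$ algebraic prefactor and feeds naturally into the Poincar\'e step. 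Your route can be repaired along the same mean-insertion lines, but as written the conversion step would fail to produce a constant $C_{s,\gamma}$ independent of the mesh.
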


\noindent As pointed out in Remark~\ref{remk:number-quadrature-points}, the consistency error $a - \widetilde{a}_n$ allows one to infer 
uniform coercivity by a perturbation argument: 
\begin{lemma}[Uniform coercivity]
\label{lemma:uniform_coercivity}
	Let the assumptions of Lemma~\ref{consistency_error_lemma_bilinear} hold. Then, there are constants  $\widetilde{\alpha}, \lambda_{1}, \lambda_{2}>0$  
depending only on the shape regularity constant $\gamma$ and $s$
such that for $n\ge p + \lambda_{1}\ln (p+1) + \lambda_{2}\ln (\# \mathcal{T}_{\gamma} +1)$ there holds 
	\begin{align}
		\widetilde{\alpha} \|v_{N}\|^{2}_{\widetilde{H}^{s}(\Omega)} \le \widetilde{a}_{n}(v_{N},v_{N}) ~~~~\text{for all } v_{N} \in S^{p,1}_0(\mathcal{T}_{\gamma}).
	\end{align}
\end{lemma}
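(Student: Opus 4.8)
The plan is to run the perturbation argument already announced in Remark~\ref{remk:number-quadrature-points}(iii): treat $\widetilde{a}_n$ as a perturbation of the \emph{continuous} bilinear form $a$, which is coercive on $\widetilde H^s(\Omega)$, and absorb the perturbation using the consistency estimate of Lemma~\ref{consistency_error_lemma_bilinear}. Recall from Section~\ref{ch:main_result} (Lax--Milgram) that there is a coercivity constant $\alpha > 0$, depending only on $s$ and \emph{independent of the mesh and of $p$}, with $a(v,v) \ge \alpha \|v\|^2_{\widetilde H^s(\Omega)}$ for all $v \in \widetilde H^s(\Omega)$, in particular for all $v_N \in S^{p,1}_0(\mathcal{T}_\gamma)$. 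Writing
\[
\widetilde{a}_n(v_N,v_N) = a(v_N,v_N) - \bigl(a(v_N,v_N) - \widetilde{a}_n(v_N,v_N)\bigr) \ge \alpha \|v_N\|^2_{\widetilde H^s(\Omega)} - |a(v_N,v_N) - \widetilde{a}_n(v_N,v_N)|
\]
reduces the claim to showing that the consistency error can be made smaller than $\tfrac{\alpha}{2}\|v_N\|^2_{\widetilde H^s(\Omega)}$ by taking $n$ sufficiently large.

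To this end I would apply Lemma~\ref{consistency_error_lemma_bilinear} with $u = v = v_N$ and $r = p$, which yields a constant $C_{s,\gamma}$ and $\rho > 1$ (both depending only on $\gamma$ and $s$) such that $|a(v_N,v_N) - \widetilde{a}_n(v_N,v_N)| \le C_{s,\gamma} (\#\mathcal{T}_\gamma)^2 \rho^{\,2p-2n+1} p^6 \|v_N\|^2_{\widetilde H^s(\Omega)}$. Combining with the previous display gives
\[
\widetilde{a}_n(v_N,v_N) \ge \Bigl(\alpha - C_{s,\gamma} (\#\mathcal{T}_\gamma)^2\, \rho^{\,1-2(n-p)}\, p^6\Bigr) \|v_N\|^2_{\widetilde H^s(\Omega)}.
\]
It therefore suffices to choose $n$ so large that the bracketed quantity is bounded below by $\alpha/2$; one then sets $\widetilde\alpha := \alpha/2$. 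The sufficient condition is $C_{s,\gamma}(\#\mathcal{T}_\gamma)^2 \rho^{\,1-2(n-p)} p^6 \le \alpha/2$, and taking logarithms turns this into a linear inequality in $n-p$, namely $2(n-p)\ln\rho \ge \ln\rho + 6\ln p + 2\ln(\#\mathcal{T}_\gamma) - \ln\bigl(\tfrac{\alpha}{2C_{s,\gamma}}\bigr)$. Replacing $p$ by $p+1$ and $\#\mathcal{T}_\gamma$ by $\#\mathcal{T}_\gamma+1$ (so that both logarithms are nonnegative and the bound is valid for all $p,\#\mathcal{T}_\gamma \ge 1$) and solving for $n$ gives a threshold of the form $n \ge p + \lambda_1 \ln(p+1) + \lambda_2 \ln(\#\mathcal{T}_\gamma+1)$ with $\lambda_1 = 3/\ln\rho$ and $\lambda_2 = 1/\ln\rho$, into which the remaining additive constant $\tfrac12 - \tfrac{1}{2\ln\rho}\ln(\tfrac{\alpha}{2C_{s,\gamma}})$ is absorbed by enlarging $\lambda_1$, using $\ln(p+1) \ge \ln 2 > 0$.

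The argument is essentially bookkeeping once Lemma~\ref{consistency_error_lemma_bilinear} is in hand, so there is no real analytical obstacle; the only points demanding care are that the coercivity constant $\alpha$ of the \emph{continuous} form be genuinely independent of $p$ and of $\mathcal{T}_\gamma$ (which is why uniform coercivity of $\widetilde{a}_n$ results), and that the fixed additive constant produced by $C_{s,\gamma}$ and $\alpha$ be absorbable into the logarithmic terms of the stated threshold — possible precisely because $\ln(p+1)$ and $\ln(\#\mathcal{T}_\gamma+1)$ are bounded away from zero. This confirms the logarithmic over-integration $n \ge p + \mathcal{O}\bigl(\ln((p+1)(\#\mathcal{T}_\gamma+1))\bigr)$ anticipated in Remark~\ref{remk:number-quadrature-points}, which is weaker than the sharp $n \ge p+1$ of Corollary~\ref{cor:stability} but requires no positivity/exactness structure of the quadrature.
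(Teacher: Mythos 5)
Your proposal is correct and follows essentially the same route as the paper: coercivity of the continuous form $a(\cdot,\cdot)$ plus the triangle inequality, the consistency bound of Lemma~\ref{consistency_error_lemma_bilinear} with $r=p$, and a logarithmic choice of $n-p$ making the perturbation term at most $\alpha/2$, yielding $\widetilde\alpha=\alpha/2$. Even your explicit bookkeeping ($\lambda_1 = 3/\ln\rho$ for the $p^6$ factor, $\lambda_2$ of order $1/\ln\rho$ for $(\#\mathcal{T}_\gamma)^2$, absorbing the fixed additive constant via $\ln(p+1)\ge\ln 2$) matches the paper's choice of $\lambda_1,\lambda_2$ up to inessential constants.
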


\begin{proof}
	The coercivity of $a(\cdot,\cdot)$, the triangle inequality and Lemma~\ref{consistency_error_lemma_bilinear} applied with $r=p$ give
	\begin{align*}
		\alpha\|v_{N}\|^{2}_{\widetilde{H}^{s}(\Omega)} &\le a(v_{N},v_{N}) \le \widetilde{a}_{n}(v_{N},v_{N})+ |a(v_{N},v_{N})-\widetilde{a}_{n}(v_{N},v_{N})| \\
		& \le \widetilde{a}_{n}(v_{N},v_{N}) + C_{s,\gamma} \: (\# \mathcal{T}_{\gamma})^{2} \rho^{2p-2n+1}p^{6} \|v_{N}\|^{2}_{\widetilde{H}^{s}(\Omega)}.
	\end{align*}
	As the second term on the right-hand side tends to zero for $n \rightarrow \infty$, we may ensure for 
	$n\ge p + \lambda_{1} \ln (p+1) + \lambda_{2}\ln (\# \mathcal{T}_{\gamma} +1) $  with large enough constants $\lambda_1,\lambda_2$ that 
	\begin{align}\label{eq:coerc_tmp1}
		C_{s,\gamma} \: (\#\mathcal{T}_{\gamma})^{2} p^{6}\rho^{1-2\lambda_{1} \ln (p+1)-2\lambda_{2} \ln (\# \mathcal{T}_{\gamma}+1)} \le \dfrac{\alpha}{2}
	\end{align}
	so that coercivity of $\widetilde{a}_n$ follows with coercivity constant $\widetilde\alpha:= \alpha/2$. To give more details: we note that $\lambda_1$, $\lambda_2$ can be chosen independently of  $p$ and $\# \mathcal{T}_{\gamma}$ as
	\begin{itemize}
		\item $ \lambda_{1} \ge \dfrac{3}{\ln(\rho)} ~~~~~~~~~~~~~~~~~~~~~~~~~~~~~~~~~~~~~~~~~~~\hspace{1.6mm} \Longrightarrow~~~ (p+1)^{6-2\lambda_{1} \ln(\rho)} \le 1$,
		\item $ \lambda_{2} \ge \dfrac{2}{\ln(\rho)} ~~~~~~~~~~~~~~~~~~~~~~~~~~~~~~~~~~~~~~~~~~~\hspace{1.6mm} \Longrightarrow~~~ (\# \mathcal{T}_{\gamma} +1)^{2-\lambda_{2} \ln(\rho)} \le 1$,
		\item $ \lambda_{2} \ge \max\left( \dfrac{\ln(2\rho ~C_{s,\gamma}) -\ln(\alpha)}{\ln(\rho)\ln(2)} ,0\right) ~~~~~~\hspace{8.4mm}\Longrightarrow~~~ C_{s,\gamma} \: \rho \: (\# \mathcal{T}_{\gamma} +1)^{-\lambda_{2}\ln(\rho)} \le \dfrac{\alpha}{2}$,
	\end{itemize}
	which directly gives \eqref{eq:coerc_tmp1}.
	%	\begin{align}
		%			C(s,\rho, \sigma) &\# \mathcal{T} p^{6} \rho^{1-2\lambda_{1} \ln (p+1)-2\lambda_{2} \ln (L+1)} = C(s,\rho, \sigma) \# \mathcal{T}^{2} p^{6} \rho (p+1)^{-2\lambda_{1}\ln(\rho)} (L+1)^{-2\lambda_{2}\ln(\rho)} \notag \\
		%			&\le C(s,\rho,\sigma)16\rho (L+1)^{-\lambda_{2}\ln(\rho)} \cdot p^{6}(p+1)^{-2\lambda_{1} \ln(\rho)} \cdot L^{2} (L+1)^{-\lambda_{2} \ln(\rho)} \le \dfrac{\alpha}{2}.
		%	\end{align}
\end{proof}

\begin{proof}[Proof of Theorem~\ref{main_result}]
\emph{Proof of (\ref{ungl_main_result}):} 
Under the assumptions made, we can apply the stability result Corollary~\ref{cor:stability} with $\mathcal{T}_{\gamma}=\mathcal{T}_{geo,\sigma}^{L}$ noting that $\# \mathcal{T}_{geo,\sigma}^{L} = 2L+2$. Hence, for $r \in \{1,\dots,p\}$, we can use the First Strang Lemma to estimate
\begin{align}\label{has_to_be_bounded}
	\|u-\widetilde{u}_{N,n}\|_{\widetilde{H}^{s}(\Omega)} \le&~ C \bigg( \inf\limits_{u_{r}\in S^{r,1}_0(\mathcal{T}^{L}_{geo,\sigma})} \bigg( \|u-u_{r}\|_{\widetilde{H}^{s}(\Omega)} + \sup\limits_{w \in S^{p,1}_0(\mathcal{T}^{L}_{geo,\sigma})} \dfrac{|a(u_{r},w)-\widetilde{a}_{n}(u_{r},w)|}{\|w\|_{\widetilde{H}^{s}(\Omega)}}\bigg)  \notag \\
	&~\quad+ \sup\limits_{\omega \in S^{p,1}_0(\mathcal{T}^{L}_{geo,\sigma})}\dfrac{|l(w)-\widetilde{l}_{n}(w)|}{\|w\|_{\widetilde{H}^{s}(\Omega)}}\bigg). 
\end{align}
\noindent Taking $u_{r} \in S^{r,1}_0({\mathcal T}^L_{geo,\sigma})$ as the $hp$-FEM approximation of (\ref{galerkin_approx}) 
for the space $S_{0}^{r,1}(\mathcal{T}^L_{geo,\sigma})$, we get from Proposition~\ref{theorem_galerkin} for the first term
\begin{align}
	\|u-u_{r}\|_{\widetilde{H}^{s}(\Omega)} \le C e^{-br} + C_{\varepsilon} \sigma^{(1/2-\varepsilon)L} .
\end{align}
Lemma \ref{consistency_error_lemma_bilinear} and the \emph{a priori} estimate $\|u_{r}\|_{\widetilde{H}^{s}(\Omega)} \le C\|f\|_{L^{2}(\Omega)}$ lead to
	\begin{align}
	 \sup\limits_{w \in S^{p,1}_0(\mathcal{T}^{L}_{geo,\sigma})} \dfrac{|a(u_{r},w)-\widetilde{a}_{n}(u_{r},w)|}{\|w\|_{\widetilde{H}^{s}(\Omega)}}\le
	C_{s,\sigma,f} \: L^{2} \rho^{r+p-2n+1}r^{3} p^{3}.
\end{align}
Finally, Lemma \ref{consistency_error_lemma_functional} provides
\begin{align}
	\sup\limits_{w \in S^{p,1}_0(\mathcal{T}^{L}_{geo,\sigma})}\dfrac{|l(w)-\widetilde{l}_{n}(w)|}{\|w\|_{\widetilde{H}^{s}(\Omega)}} \le C_f \: \rho^{p-2n+1}p.
\end{align}
This proves the convergence result (\ref{ungl_main_result}).

\emph{Proof of (\ref{ungl_main_result-DOF}):} Follows from (\ref{ungl_main_result}) by taking $r = p/2$.

\emph{Proof of the complexity estimate:}
We are left to show that, for $L \sim p \sim n$ and the basis $\mathcal{B}^{geo}= \mathcal{B}^{lin} \cup \mathcal{B}^{Leg} $ from Definition~\ref{geometric_mesh_definition}, \revision{ the number of algebraic operations to set up the linear system $Ax=b$ is $\mathcal{O}(L^{5})$,
where $A \in \mathbb{R}^{N\times N}$ with $A_{ij}=a(\varphi_{j},\varphi_{i})$ and $b \in \mathbb{R}^N$ with $b_{i}:=\skp{f,\varphi_{i}}_{L^{2}(\Omega)}$.}
\revision{The key to the proof is that the evaluation of the $p+1$ shape functions at the $n$ quadrature points always happens on the reference element $\widehat{T}$ and therefore can be precomputed. This precomputation can be realized in $\mathcal{O}( n p)$ operations
using three-term recurrence relations by noting that the integrated Legendre polynomials are orthogonal polynomials (see, e.g., \cite[(A.3), (A.9)]{karniadakis-sherwin05}). }

\revision{
	\revision{We recall that the support of the basis \revision{functions consists of} two mesh elements for $\mathcal{B}^{lin}$ and one for $\mathcal{B}^{Leg}$. Therefore, in the definition of the approximated bilinear form
\begin{align*}
	\widetilde{a}_{n}(\varphi_{i},\varphi_{j}) = \frac{C(s)}{2} \bigg( \sum\limits_{T \in \mathcal{T}_{geo,\sigma}^{L}} \sum\limits_{T' \in \mathcal{T}_{geo,\sigma}^{L}} Q_{T,T'}^n(\varphi_{i},\varphi_{j}) + 2  \sum\limits_{T \in \mathcal{T}_{geo,\sigma}^{L}} Q^n_{T,\Omega^{c}}(\varphi_{i},\varphi_{j}) \bigg),
\end{align*}
most of the summands are zero and only $\mathcal{O}(1)$ are left to calculate.}
\revision{Before we derive the stated complexity bound, we show that a direct implementation is not enough to achieve $\mathcal{O}(L^{5})$.}

\emph{\revision{Direct implementation:}} 
In terms of computational effort, \revision{the evaluation of the stiffness matrix $A_{ij}$ dominates the computation of the load vector $b_{i}$} (which is of order $\mathcal{O}(L p n)$ by the same reasoning as below). For the stiffness matrix, a naive implementation contains nested loops (starting from the outer loops) of
\begin{itemize}
\item \revision{ 2 loops over the $\mathcal{O}(p \: L)$ basis functions $ \mathcal{B}^{geo} $ (thus complexity $\mathcal{O}(p^2\: L^{2})$) with evaluation of $\mathcal{O}(1)$ quadrature formulas of the type $Q^n_{T,T'}(\varphi_{i},\varphi_{j})$ and $Q^n_{T,\Omega^{c}}(\varphi_{i},\varphi_{j})$;}
%\item for a fixed pair of elements $T,T'$: expressing $v_N,w_N$ in the basis $\mathcal{B}^{geo}$ $\implies$ 2 loops over all shape functions $b_T,b_{T'}$, complexity $\mathcal{O}(p^2)$, and evaluation of $Q^n_{T,T'}(b_{T},b_{T'})$;
\item \revision{evaluation of each $Q^n_{T,T'}(\varphi_{i},\varphi_{j})$ and $Q^n_{T,\Omega^{c}}(\varphi_{i},\varphi_{j})$}: 2 loops over the quadrature points with complexity $\mathcal{O}(n^2)$.
\end{itemize}
In total this leads to a complexity of $\mathcal{O}(p^2 L^2) \mathcal{O}(n^2) = \mathcal{O}(L^6)$, since $L \sim p \sim n$. 
We now show that the complexity of setting up the stiffness matrix and therefore the overall complexity, can actually be reduced from $\mathcal{O}(L^6)$ to $\mathcal{O}(L^5)$.

\emph{\revision{Step~1 (blockwise assembly):}} 
We assemble the stiffness matrix blockwise. Therefore, for subsets of basis functions $\widehat{\mathcal{B}},\widetilde{\mathcal{B}} \subset \mathcal{B}$, 
we introduce the notation $\widetilde{a}_{n}(\widehat{\mathcal{B}},\widetilde{\mathcal{B}}) :=(\widetilde{a}_{n}(\revision{\varphi}_{i},\revision{\varphi}_{j}))_{\revision{\varphi}_{i} \in \widehat{\mathcal{B}},\revision{\varphi}_{j} \in \widetilde{\mathcal{B}}}$ for a matrix block. 

As $\mathcal{B} = \mathcal{B}^{lin} \cup \mathcal{B}^{Leg}$, we have 
$A = \begin{pmatrix} \widetilde a_n(\mathcal{B}^{lin},\mathcal{B}^{lin}) & \widetilde  a_n(\mathcal{B}^{lin},\mathcal{B}^{Leg})  \\  \widetilde a_n(\mathcal{B}^{Leg},\mathcal{B}^{lin}) & \widetilde a_n(\mathcal{B}^{Leg},\mathcal{B}^{Leg})  \end{pmatrix}$.
The block $\widetilde a_n(\mathcal{B}^{lin},\mathcal{B}^{lin})$ has $\mathcal{O}(L^2)$ entries (as $\# \mathcal{B}^{lin} = \mathcal{O}(L)$) that can each be calculated in $\mathcal{O}(n^2)$ operations. 
Similarly, the blocks  $\widetilde{a}_{n}(\mathcal{B}^{lin},\mathcal{B}^{Leg})$ and $\widetilde{a}_{n}(\mathcal{B}^{Leg},\mathcal{B}^{lin})$ have $\mathcal{O}(p\: L^{2})$ entries (\revision{as $\# \mathcal{B}^{Leg} = \mathcal{O}(p \: L)$}), 
which can be each calculated in $\mathcal{O}(n^{2})$ operations. 
Thus, the total complexity for the calculation of these three blocks is $\mathcal{O}(p\: L^{2}  n^{2})$ operations.
It thus remains to treat the block $\widetilde{a}_{n}(\mathcal{B}^{Leg},\mathcal{B}^{Leg})$. 

\emph{Step~2 (treatment of $\widetilde{a}_n(\mathcal{B}^{Leg},\mathcal{B}^{Leg})$):} 
Let $T,T' \in \mathcal{T}_{geo,\sigma}^{L}$ be a pair of elements. We distinguish three cases: 
the $\mathcal{O}(L)$ pairs of adjacent elements, the $\mathcal{O}(L)$ coinciding pairs $T' = T$,  and the $\mathcal{O}(L^2)$ well-separated pairs. 
\revision{For the first two cases of adjacent pairs or identical pairs, one has to consider  
$\mathcal{O}(p^{2})$ combinations of basis functions $\mathcal{B}^{Leg}$ so that the total complexity for this case is $\mathcal{O}(p^{2} L\: n^{2})$, 
which is the desired $\mathcal{O}(L^5)$ complexity.}

Therefore, let $T,T'$ be separated, i.e., $\overline T \cap \overline{T'} =  \emptyset$ and $\revision{\varphi}_T \in \mathcal{B}_T, \revision{\varphi}_{T'} \in \mathcal{B}_{T'}$ be fixed. For this case, the 
 bilinear form $\widetilde{a}_{n}(\revision{\varphi}_{T},\revision{\varphi}_{T'})$ simplifies to
\begin{align}\label{eq:effort_tmp}
	\dfrac{\widetilde{a}_{n}(\revision{\varphi}_{T},\revision{\varphi}_{T'})}{h_{T}h_{T'}} &=   GL_{n,x}\circ GL_{n,y} \left( \dfrac{\widehat{\revision{\varphi}}_{T}(x)\widehat{\revision{\varphi}}_{T'}(y)}{|(1-x)h_{T}+ \operatorname{dist}_{T,T'} +  yh_{T'}|^{1+2s}} \right) \notag \\
	&= (\omega_{i}\: \widehat{\revision{\varphi}}_{T}(x_{i}))^{\top}_{i=1,\dots,n} \cdot
	(k_{T,T'}(x_{i},y_{j}))_{i,j=1,\dots,n} \cdot
	(\omega_{j}\: \widehat{\revision{\varphi}}_{T'}(y_{j}))_{j=1,\dots,n},
\end{align}
where $k_{T,T'}(x,y):= (|(1-x)h_{T}+ \operatorname{dist}_{T,T'} +  yh_{T'}|^{1+2s})^{-1}$. 
The key observation is that the vectors $(\omega_{i}\: \widehat{\revision{\varphi}}_{T}(x_{i}))^{\top}_{i=1,\dots,n} $ and $(\omega_{j}\: \widehat{\revision{\varphi}}_{T'}(y_{j}))_{j=1,\dots,n}$ can be precomputed in $\mathcal{O}(pn)$ operations using recurrence relations since $\widehat{\revision{\varphi}}_{T}$ and $\widehat{\revision{\varphi}}_{T'} $ are the integrated Legendre polynomials on the reference element $ \widehat{T}$ and therefore independent of $T$ and $T'$. Thus, we can compute the products in (\ref{eq:effort_tmp}) as: For all pairs of separated elements $T$, $T'$ and all $\revision{\varphi}_T \in \mathcal{B}_T$, compute 
the vectors
\begin{itemize}
\item $M:=(\omega_{i}\: \widehat{\revision{\varphi}}_{T}(x_{i}))^{\top}_{i=1,\dots,n} \cdot
		(k_{T,T'}(x_{i},y_{j}))_{i,j=1,\dots,n}$ in $\mathcal{O}(n^2)$;

\item then, loop over all basis functions $\revision{\varphi}_{T'} \in \mathcal{B}_{T'}$ and compute the scalar product $M \cdot (\omega_{j}\: \widehat{\revision{\varphi}}_{T'}(y_{j}))_{j=1,\dots,n}$ in $\mathcal{O}(n)$.	
\end{itemize}
This leads to a total complexity of $\mathcal{O}(L^2 \: p \:(n^2+p\:n)) = \mathcal{O}(L^{5})$, which finishes the proof.
}

%We finish the proof with Algorithm \ref{alg1},
%\begin{algorithm}
%	\caption{}
%	\label{alg1}
%	\begin{algorithmic}[1]
%		\FOR{$T \in \mathcal{T}_{geo,\sigma}^{L}$}
%		\FOR{$b_{T} \in \mathcal{B}^{T}$}
%		\FOR{$T' \in \mathcal{T}_{geo,\sigma}^{L}$ with $\overline{T} \cap \overline{T'} = \emptyset$}
%		\STATE compute $M:=(\omega_{i}\: \widehat{b}_{T}(x_{i}))^{\top}_{i=1,\dots,n} \cdot
%		(k_{T,T'}(x_{i},y_{j}))_{i,j=1,\dots,n}$ in $\mathcal{O}(n^2)$
%		\FOR{$b_{T'} \in \mathcal{B}^{T'}$ }
%		\STATE compute $M \cdot (\omega_{j}\: \widehat{b}_{T'}(x_{j}))_{j=1,\dots,n}$ in $\mathcal{O}(n)$		
%		\ENDFOR
%		\ENDFOR
%		\ENDFOR
%		\ENDFOR
%	\end{algorithmic}
%\end{algorithm}
%to get $\mathcal{O}(L^2 \: p \:(n^2+p\:n)) = \mathcal{O}(L^{5})$ algebraic operations to set up the entries for integrated Legendre basis functions with well-separated support. This leads to $ \mathcal{O}(L^{5}) $ as total amount of algebraic operations to set up the linear system.
%We want to note that the constant can be slightly improved by also precomputing the matrices $(k_{T,T'}(x_{i},y_{j}))_{i,j=1,\dots,n}$ for all combinations of $T$ and $T'$.
\end{proof}

%\begin{proof}[Proof of Corollary~\ref{corollary_exponential}]
%	This corollary is a direct application of Theorem \ref{main_result} by setting $r=p/2$\revision{.
%Since} exponential decay is faster than polynomial growth, we obtain with $N \sim p^2$ that
%\begin{align}
%	\|u-\revision{\widetilde{u}_{N,n}}\|_{\widetilde{H}^{s}(\Omega)} &\le C \bigg(  e^{-b \: p/2} + \sigma^{(1/2-\varepsilon)\: c \: p/2} + (\#\mathcal{T}^{L}_{geo,\sigma})^{2} p^{6} \rho^{1+3p/2-2n} \bigg) \notag \\
%	&\le C \bigg(  e^{-b \: p/2} + \sigma^{(1/2-\varepsilon)\: c \: p/2} + \rho^{-p/2} \bigg)  =: C\revision{e^{-b'\sqrt{N}}},
%\end{align}
%which is the stated result. 
%\end{proof}

%------------------------------------------------------------
\section{Consistency errors}\label{ch:consistency_errors}
%------------------------------------------------------------
In this chapter, we present the proofs for the consistency error estimates in Lemmas~\ref{consistency_error_lemma_functional} and \ref{consistency_error_lemma_bilinear}.

We start with a well-known basic error estimate for Gaussian quadrature. Recall that 
\begin{align*}
	I(\Phi):= \int_{\widehat{T}}\Phi(x)\omega(x) ~dx \approx \sum\limits_{i=1}^{n}  \omega_{i}  \Phi(x_{i})=: G_n(\Phi),
\end{align*}
with $\sum_i \omega_i =C_\omega := \int_{\widehat{T}} \omega dx$
and that the numerical integration is exact for $\Pi \in \mathcal{P}_{2n-1}(\widehat{T})$. Thus, for an arbitrary polynomial $\Pi \in \mathcal{P}_{2n-1}(\widehat{T})$ we get
(using also the positivity of the weights $\omega_i$)
\begin{align*}
	E_{n} &:= |I(\Phi)-G_{n}(\Phi)| = |I(\Phi-\Pi)-G_{n}(\Phi-\Pi)| \\
	&\le C_\omega \|\Phi-\Pi\|_{L^{\infty}(\widehat{T})} + \sum_{i=1}^n\omega_{i}\: \|\Phi-\Pi\|_{L^{\infty}(\widehat{T})} \le 2C_\omega \|\Phi-\Pi\|_{L^{\infty}(\widehat{T})},
\end{align*}
which gives the best approximation estimate
\begin{align}\label{1d_quad_approx}
	E_{n} \le 2C_\omega \inf_{\Pi \in \mathcal{P}_{2n-1}(\widehat{T})} \|\Phi-\Pi\|_{L^{\infty}(\widehat{T})}.
\end{align}

\noindent By tensorization, this result for univariate Gaussian quadrature can be extended to the $2d$-case. We consider the special case $\omega \equiv 1$ and 
for 
\begin{align*}
	I^{2D}(\Phi):=\int_{\widehat{T}} \int_{\widehat{T}} \Phi(x,y) \,dy\,dx \approx 
	G^{2D}_{n}(\Phi):= G_{n,\revision{x}} \circ G_{n,\revision{y}}(\Phi)  = G_{n,\revision{y}} \circ G_{n,\revision{x}}(\Phi) = \revision{\sum\limits_{i,j=1}^{n}} \omega_{i}\omega_{j}\Phi(x_{i},y_{j})
\end{align*}
\revision{we estimate the error} using $C_\omega = 1$ for $\omega \equiv 1$: 
\begin{align}\label{2d_to_1d}
	E^{2D}_{n}& := |I^{2D}(\Phi)-G^{2D}_{n}(\Phi)| \\
	&= \left| \int_{\widehat{T}} \int_{\widehat{T}} \Phi(x,y) \,dy - G_{n,\revision{y}}(\Phi(x,\cdot))\,dx \right| 
	+ \left| \int_{\widehat{T}}  G_{n,\revision{y}}(\Phi(x,\cdot)) \,dx - G_{n,\revision{y}} \circ G_{n,\revision{x}}(\Phi) \right| \notag \\
\nonumber 
	&\le \sup\limits_{x \in \widehat{T}} |I(\Phi(x,\cdot))-G_{n,\revision{y}}(\Phi(x,\cdot))| +  \left |G_{n,\revision{y}} \left( \int_{\widehat{T}} \Phi(x,\cdot)\,dx - G_{n,\revision{x}}(\Phi) \right)\right|  \nonumber 
\\
\nonumber 
	&\le \sup\limits_{x \in \widehat{T}} |I(\Phi(x,\cdot))-G_{n,\revision{y}}(\Phi(x,\cdot))| 
         +  \sum_{i=1}^n \revision{\omega_i}  \left| \int_{\widehat{T}} \Phi(x,\revision{y_i})\,dx - G_{n,\revision{x}}(\Phi(\cdot,\revision{y_i})) \right|  \\
\nonumber 
& \stackrel{\sum_i \omega_i = 1}{\leq} 
	\sup\limits_{x \in \widehat{T}} |I(\Phi(x,\cdot))-G_{n,\revision{y}}(\Phi(x,\cdot))| 
         +  \sup_{y \in \widehat{T}} \left| I(\Phi(\cdot,y)) - G_{n,\revision{x}}(\Phi(\cdot,y)) \right|.  
\end{align}

%\begin{align}\label{2d_to_1d}
%	E^{2D}_{n}& := |I^{2D}(\Phi)-G^{2D}_{n}(\Phi)| \notag \\
%	&= \left| \int_{\widehat{T}} \int_{\widehat{T}} \Phi(x,y) \,dy - G_{n}^{y}(\Phi(x,\cdot))\,dx \right| 
%	+ \left| \int_{\widehat{T}}  G_{n}^{y}(\Phi(x,\cdot)) \,dx - G_{n}^{x} \circ G_{n}^{y}(\Phi) \right| \notag \\
%	&\le \sup\limits_{x \in \widehat{T}} \|I(\Phi(x,\cdot))-G_{n}^{y}(\Phi(x,\cdot))\|_{L^{\infty}(\widehat{T})} + \|I(\Phi(\cdot,y))-G_{n}^{x}(\Phi(\cdot,y))\|_{L^{\infty}(\widehat{T})}.
%\end{align}
In view of (\ref{1d_quad_approx}), these two univariate integration errors are estimated by best approximation errors. 
For analytic integrands, the best approximation errors will be quantified in Proposition~\ref{proposition_best_approx}.

\begin{definition}[Bernstein ellipse]
	 For $\rho > 1 $, we define the Bernstein ellipse $\mathcal{E}_\rho$ and its scaled version $\widehat{\mathcal{E}}_\rho$ by 
	\begin{align}
		\mathcal{E}_{\rho}&:=\{ z \in \mathbb{C}:|z-1|+|z+1|<\rho+\rho^{-1} \}, \\ 
\label{eq:transformed-ellipse}
		\widehat{\mathcal{E}}_{\rho}&:= \revision{{F}}^{-1}_{(-1,1)}(	\mathcal{E}_{\rho}),
	\end{align}
\revision{where $F_{(-1,1)}:\mathbb{C} \rightarrow \mathbb{C}$, $x \mapsto 2x-1$ is the affine map transforming $(-1,1)$ to $(0,1)$. }
	\revision{We note that the focal points of $ \widehat{\mathcal{E}}_{\rho} $ are $0$ and $1$.}
	
\end{definition}

\begin{proposition}\label{proposition_best_approx}
	Let $\Phi$ be holomorphic on $\widehat{\mathcal{E}}_{\widetilde{\rho}}$, $\widetilde\rho > 1$. Then, for every $1<\rho<\widetilde{\rho}$, we have
	\begin{align}
		\inf\limits_{v \in \mathcal{P}_{n}} \|\Phi-v\|_{L^{\infty}(0,1)} \le \dfrac{2}{\rho-1}\rho^{-n} \|\Phi\|_{L^{\infty}(\widehat{\mathcal{E}}_{\rho})}.
	\end{align}
%	If additionally $f|_{[0,1]}$ is real-valued, then the polynomials in the infimum may be assumed to be also real-valued on $\mathbb{R}$.
\end{proposition}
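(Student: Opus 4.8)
The plan is to reduce the statement on $(0,1)$ to the classical best-approximation result on the reference interval $(-1,1)$ and then prove the latter via a Chebyshev expansion. First I would exploit the affine map $F_{(-1,1)}\colon x \mapsto 2x-1$, which takes $(0,1)$ onto $(-1,1)$ and, by the definition \eqref{eq:transformed-ellipse}, takes $\widehat{\mathcal{E}}_\rho$ onto the standard Bernstein ellipse $\mathcal{E}_\rho$. Setting $g := \Phi \circ F_{(-1,1)}^{-1}$, the function $g$ is holomorphic on $\mathcal{E}_{\widetilde\rho}$; composition with an affine map is a bijection of $\mathcal{P}_n$ onto itself, and one has both $\|\Phi - v\|_{L^\infty(0,1)} = \|g - (v \circ F_{(-1,1)}^{-1})\|_{L^\infty(-1,1)}$ and $\|\Phi\|_{L^\infty(\widehat{\mathcal{E}}_\rho)} = \|g\|_{L^\infty(\mathcal{E}_\rho)}$. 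Hence it suffices to establish
\[
\inf_{v \in \mathcal{P}_n} \|g - v\|_{L^\infty(-1,1)} \le \frac{2}{\rho-1}\rho^{-n}\|g\|_{L^\infty(\mathcal{E}_\rho)}
\]
for $g$ holomorphic on $\mathcal{E}_{\widetilde\rho}$.

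For this reference-interval estimate I would expand $g$ in Chebyshev polynomials, $g = \sum_{k \ge 0} a_k T_k$, and use the truncation $v = \sum_{k=0}^n a_k T_k \in \mathcal{P}_n$ as competitor in the infimum. The core quantitative input is the decay of the Chebyshev coefficients: via the Joukowski map $z = \tfrac12(w + w^{-1})$, which carries the circle $|w| = r$ onto $\mathcal{E}_r$ and satisfies $T_k(z) = \tfrac12(w^k + w^{-k})$, each coefficient is written as a contour integral over $|w| = r$ for any $1 < r < \widetilde\rho$; bounding the integrand by $\|g\|_{L^\infty(\mathcal{E}_r)}$ and letting $r \to \rho$ (permissible because $g$ is holomorphic on the strictly larger $\mathcal{E}_{\widetilde\rho}$, hence continuous up to $\overline{\mathcal{E}_\rho}$) yields $|a_k| \le 2\rho^{-k}\|g\|_{L^\infty(\mathcal{E}_\rho)}$ for $k \ge 1$. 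Since $\|T_k\|_{L^\infty(-1,1)} = 1$, summing the geometric tail gives
\[
\Big\|g - \sum_{k=0}^n a_k T_k\Big\|_{L^\infty(-1,1)} \le \sum_{k=n+1}^\infty |a_k| \le 2\|g\|_{L^\infty(\mathcal{E}_\rho)} \sum_{k=n+1}^\infty \rho^{-k} = \frac{2\rho^{-n}}{\rho-1}\|g\|_{L^\infty(\mathcal{E}_\rho)},
\]
which is exactly the claimed bound.

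I expect the main obstacle to be the rigorous justification of the coefficient bound $|a_k| \le 2\rho^{-k}\|g\|_{L^\infty(\mathcal{E}_\rho)}$: one must verify that the Chebyshev series represents $g$ on the ellipse (not merely on the interval), identify the $a_k$ with the Laurent coefficients of $w \mapsto g(\tfrac12(w+w^{-1}))$ on the annulus $\rho^{-1} < |w| < \rho$, and control the passage $r \to \rho$ in the contour integral together with the sup of $g$ on $\mathcal{E}_r$. All of this is standard Chebyshev-expansion theory, and the two remaining ingredients — the affine reduction and the geometric summation — are routine.
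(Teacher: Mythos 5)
Your proposal is correct and follows essentially the same route as the paper: the paper's proof consists of the same affine reduction via $F_{(-1,1)}$ together with a citation of \cite[Chap.~7, Thm.~8.1]{devore1993constructive}, and your Chebyshev-expansion argument (Joukowski map, coefficient bound $|a_k|\le 2\rho^{-k}\|g\|_{L^\infty(\mathcal{E}_\rho)}$, geometric tail) is precisely the standard proof of that cited theorem, carried out in full. The only difference is that you prove the classical Bernstein-ellipse estimate rather than cite it, and your handling of the limit $r\to\rho$ is sound since $\overline{\mathcal{E}_\rho}\subset\mathcal{E}_{\widetilde\rho}$ for $\rho<\widetilde\rho$.
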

\begin{proof}
	%\revision{This theorem is just a transformed version of \cite[Theorem 3.4.]{apel2015interpolation}.}
	This proposition is just a transformed version of \cite[Chap.~{7}, Thm.~{8.1}]{devore1993constructive}.
	
\end{proof}

With this estimate for the best approximation error, we obtain exponential convergence for the quadrature errors.

\begin{lemma}\label{lemma_quadabsch}		
Let $\widetilde{\rho} > 1$. 
\begin{enumerate}[nosep, label=(\roman*),leftmargin=*]
\item 
 Let $\Phi:\widehat{\mathcal{E}}_{\widetilde{\rho}}  \rightarrow \mathbb{C}$ be holomorphic. Then, for every $1<\rho<\widetilde{\rho}$, the quadrature error can be estimated by
		\begin{align}\label{quad_err_1D}
			|I(\Phi)-G_{n}(\Phi)| \le C\rho^{-2n+1} \|\Phi\|_{L^{\infty}(\widehat{\mathcal{E}}_{\rho})},
		\end{align}
		where the constant $C$ is independent of $n$ \revision{ and $\Phi$.}
\item 	
Let $\Phi: \widehat{\mathcal{E}}_{\widetilde{\rho}} \times \widehat{\mathcal{E}}_{\widetilde{\rho}} \rightarrow \mathbb{C}$ be such that 
for each  $y \in (0,1)$ the function $\Phi(\cdot,y)$ is holomorphic on $\widehat{\mathcal{E}}_{\widetilde{\rho}}$ and such that for each $x \in (0,1)$, $\Phi(x,\cdot)$ is holomorphic on $\widehat{\mathcal{E}}_{\widetilde{\rho}}$.
Then, for every $1<\rho<\widetilde{\rho}$, the quadrature error can be estimated by
		\begin{align}\label{quad_err_2D}
			|I^{2D}(\Phi)-G^{2D}_{n}(\Phi)| \le C\rho^{-2n+1} \left( \sup\limits_{y\in (0,1)}\|\Phi(\cdot,y)\|_{L^{\infty}(\widehat{\mathcal{E}}_{\rho})} + \sup\limits_{x\in (0,1)}\|\Phi(x,\cdot)\|_{L^{\infty}(\widehat{\mathcal{E}}_{\rho})} \right),
		\end{align}
		where the constant $C$ is independent of $n$ \revision{ and $\Phi$.}
\end{enumerate}
\end{lemma}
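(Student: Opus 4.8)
The plan is to obtain both estimates by combining the best-approximation form of the Gaussian quadrature error already recorded in \eqref{1d_quad_approx} and \eqref{2d_to_1d} with the exponential best-approximation bound for holomorphic functions from Proposition~\ref{proposition_best_approx}. No new machinery is needed; the work is purely to chain the two stated results and to keep track of the constants.

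For part (i) I would start from \eqref{1d_quad_approx}, which bounds the univariate quadrature error by the best approximation of $\Phi$ by polynomials of degree $2n-1$ in the sup-norm on $\widehat T = (0,1)$. Since $\Phi$ is holomorphic on $\widehat{\mathcal{E}}_{\widetilde\rho}$, Proposition~\ref{proposition_best_approx} applies with polynomial degree $2n-1$ and yields, for every $1 < \rho < \widetilde\rho$, the bound $\inf_{\Pi \in \mathcal{P}_{2n-1}} \|\Phi - \Pi\|_{L^\infty(0,1)} \le \frac{2}{\rho-1}\rho^{-(2n-1)}\|\Phi\|_{L^\infty(\widehat{\mathcal{E}}_\rho)}$. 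Substituting this into \eqref{1d_quad_approx} and using $\rho^{-(2n-1)} = \rho^{-2n+1}$ gives exactly \eqref{quad_err_1D} with $C := 4C_\omega/(\rho-1)$. The essential point to record is that this constant depends only on $\rho$ and on the fixed weight through $C_\omega$, hence is independent of $n$ and of $\Phi$; this uniformity is what makes part (ii) go through.

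For part (ii) I would invoke the tensorization estimate \eqref{2d_to_1d}, which controls the bivariate error $E_n^{2D}$ by $\sup_{x \in (0,1)} |I(\Phi(x,\cdot)) - G_{n,y}(\Phi(x,\cdot))|$ plus the symmetric term with the roles of $x$ and $y$ interchanged. By hypothesis, for each fixed $x \in (0,1)$ the slice $\Phi(x,\cdot)$ is holomorphic on $\widehat{\mathcal{E}}_{\widetilde\rho}$, so part (i) (with $\omega \equiv 1$, whence $C_\omega = 1$) applies to it with one and the same constant $C$. Taking the supremum over $x$ then bounds the first contribution by $C\rho^{-2n+1}\sup_{x \in (0,1)}\|\Phi(x,\cdot)\|_{L^\infty(\widehat{\mathcal{E}}_\rho)}$, and the second contribution is handled identically. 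Summing the two terms delivers \eqref{quad_err_2D}.

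I do not anticipate a genuine obstacle, since the argument is a direct composition of two results stated above. The only point deserving care is the constant bookkeeping in part (ii): one must be sure that the constant from part (i) is truly independent of the function to which it is applied, so that the \emph{same} $C$ can be used uniformly across all slices $\Phi(x,\cdot)$ before the suprema are taken. As noted above, this holds because the constant produced in part (i) depends only on $\rho$ and $C_\omega$, never on $\Phi$ itself.
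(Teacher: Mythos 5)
Your proposal is correct and follows exactly the route the paper intends: the paper states Lemma~\ref{lemma_quadabsch} without a separate proof precisely because it is the immediate combination of the best-approximation bound \eqref{1d_quad_approx} (and its tensorized version \eqref{2d_to_1d} with $C_\omega=1$) with Proposition~\ref{proposition_best_approx} applied to polynomials of degree $2n-1$. Your bookkeeping of the constant $C=4C_\omega/(\rho-1)$, its independence of $n$ and $\Phi$, and the uniform application of part (i) to the slices $\Phi(x,\cdot)$ and $\Phi(\cdot,y)$ before taking suprema is exactly what is needed.
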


The norms in the previous estimates do not involve the $ \|\cdot\|_{\widetilde{H}^{s}(\Omega)}$ norm required in the Strang Lemma. This is achieved with 
an inverse estimate or a Poincar\'e type estimate.

\begin{lemma}\label{lemma_norm_approx}
 Let $\mathcal{I}  = (x_\ell, x_\ell + h_{\mathcal I})\subset \mathbb{R}$ be an interval with diameter $h_{\mathcal{I}}:= \operatorname{diam}(\mathcal{I}) < \infty$. 
\begin{enumerate}[nosep, label=(\roman*),leftmargin=*]
\item 
There is a constant independent of $\mathcal{I}$ such that for every $\rho>1$ and $p \in \mathbb{N}$ there holds for all polynomials $v \in \mathcal{P}_{p}(\mathcal{I}) $ \revision{ and their pullbacks $\hat v := v \circ F_{\mathcal{I}}$ }
	\begin{align}
		\left\|\dfrac{d}{dx}\hat{v}\right\|_{L^{\infty}(\widehat{\mathcal{E}}_{\rho})} &\le C\rho^{p}p^{3}h_{\mathcal{I}}^{s-1/2}|v|_{H^{s}(\mathcal{I})} ,\label{norm_ung1}\\
		\|\hat{v}\|_{L^{\infty}(\widehat{\mathcal{E}}_{\rho})} &\le C\rho^{p} p~ h_{\mathcal{I}}^{-1/2} \|v\|_{{H}^{s}(\mathcal{I})}. \label{norm_ung5}	
	\end{align}
\item Denote ${\mathcal I}_{sym} = (x_\ell - h_{\mathcal I}, x_\ell + h_{\mathcal I})$ and let $v \in H^s({\mathcal I}_{sym})$ with $v|_{(x_\ell-h_{\mathcal I},x_\ell)} = 0$. 
Then, there is $C > 0$ depending only on $s$ such that 
	\begin{align}
		\left\|v\right\|_{L^{2}(\mathcal{I})} &\le C \: h_{\mathcal{I}}^{s} |v|_{H^{s}(\mathcal{I}_{sym})}. \label{norm_ung4}
	\end{align}
The same estimate holds for 
${\mathcal I}_{sym} = \revision{(x_\ell, x_\ell + 2 h_{\mathcal I})}$ and $v \in H^s({\mathcal I}_{sym})$ with $v|_{(x_\ell+h_{\mathcal I},x_\ell + 2 h_{\mathcal I})} = 0$. 
\end{enumerate}
\end{lemma}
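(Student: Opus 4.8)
The plan is to reduce everything to a few standard facts about univariate polynomials together with affine scaling. Throughout, write $\hat v = v\circ F_{\mathcal I}$ with $F_{\mathcal I}(x)=x_\ell+xh_{\mathcal I}$, so that $\hat v\in\mathcal P_p(0,1)$. The three ingredients I would isolate first are: (a) the Bernstein--Walsh inequality $\|q\|_{L^\infty(\widehat{\mathcal E}_\rho)}\le\rho^{\deg q}\|q\|_{L^\infty(0,1)}$, which holds because the conformal map of the exterior of $(0,1)$ sends the boundary of $\widehat{\mathcal E}_\rho$ to the circle of radius $\rho$; (b) the Nikolskii-type inverse estimate $\|q\|_{L^\infty(0,1)}\le C(p+1)\|q\|_{L^2(0,1)}$ for $q\in\mathcal P_p$, obtained from the $L^2$-orthonormal Legendre expansion using $\sum_{k=0}^p\|P_k\|_{L^\infty(0,1)}^2\sim p^2$; and (c) the scaling identities $\|\hat v\|_{L^2(0,1)}=h_{\mathcal I}^{-1/2}\|v\|_{L^2(\mathcal I)}$ and $\|\tfrac{d}{dx}\hat v\|_{L^2(0,1)}=h_{\mathcal I}^{1/2}\|v'\|_{L^2(\mathcal I)}=h_{\mathcal I}^{1/2}|v|_{H^1(\mathcal I)}$, which follow from the chain rule and the change of variables $y=F_{\mathcal I}(x)$. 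All associated constants live on the reference interval and are therefore independent of $\mathcal I$.

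For \eqref{norm_ung5} I would simply chain (a), (b), (c) for $\hat v\in\mathcal P_p$:
\[
\|\hat v\|_{L^\infty(\widehat{\mathcal E}_\rho)}\le\rho^p\|\hat v\|_{L^\infty(0,1)}\le C\rho^p(p+1)\|\hat v\|_{L^2(0,1)}=C\rho^p(p+1)h_{\mathcal I}^{-1/2}\|v\|_{L^2(\mathcal I)},
\]
and $\|v\|_{L^2(\mathcal I)}\le\|v\|_{H^s(\mathcal I)}$ finishes it. For \eqref{norm_ung1} I apply the same chain to $\hat v'\in\mathcal P_{p-1}$, which lands on $h_{\mathcal I}^{1/2}|v|_{H^1(\mathcal I)}$, and then trade the first-order seminorm for the fractional one by the polynomial inverse inequality $|v|_{H^1(\mathcal I)}\le Cp^{2(1-s)}h_{\mathcal I}^{s-1}|v|_{H^s(\mathcal I)}$ (established on $(0,1)$ and scaled). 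Multiplying the $p$-powers gives the exponent $1+2(1-s)=3-2s\le 3$, while the $h_{\mathcal I}$-powers combine to $h_{\mathcal I}^{1/2}\cdot h_{\mathcal I}^{s-1}=h_{\mathcal I}^{s-1/2}$, matching \eqref{norm_ung1} exactly (using $\rho^{p-1}\le\rho^p$).

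For part (ii) I would argue directly, using only that $v$ vanishes on one half of $\mathcal I_{sym}$. Restricting the double integral in $|v|_{H^s(\mathcal I_{sym})}^2$ to $x\in\mathcal I$ and $y$ in the left half $(x_\ell-h_{\mathcal I},x_\ell)$, where $v(y)=0$, gives
\[
|v|_{H^s(\mathcal I_{sym})}^2\ge\int_{\mathcal I}|v(x)|^2\Big(\int_{x_\ell-h_{\mathcal I}}^{x_\ell}\frac{dy}{|x-y|^{1+2s}}\Big)dx.
\]
Setting $\delta=x-x_\ell\in(0,h_{\mathcal I})$, the inner integral equals $\tfrac{1}{2s}\big(\delta^{-2s}-(\delta+h_{\mathcal I})^{-2s}\big)$, which is decreasing in $\delta$ and hence bounded below by its value at $\delta=h_{\mathcal I}$, namely $\tfrac{1-2^{-2s}}{2s}h_{\mathcal I}^{-2s}$. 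This yields $|v|_{H^s(\mathcal I_{sym})}^2\ge c_s h_{\mathcal I}^{-2s}\|v\|_{L^2(\mathcal I)}^2$ with $c_s=(1-2^{-2s})/(2s)$, i.e.\ \eqref{norm_ung4} with $C=c_s^{-1/2}$ depending only on $s$; the case of vanishing on the right half is identical by symmetry.

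The only genuinely non-routine step is the fractional polynomial inverse inequality $|v|_{H^1}\le Cp^{2(1-s)}h^{s-1}|v|_{H^s}$ used for \eqref{norm_ung1}: it is the sole place where the $H^s$ seminorm enters (rather than the cheaper $L^2$ or $H^1$ norm) and it accounts for two of the three powers of $p$. I would prove it on the reference interval via a Legendre/Jacobi expansion argument (or invoke the standard $hp$-inverse estimates available in the literature) and then scale; the remainder is bookkeeping of powers of $p$ and $h_{\mathcal I}$.
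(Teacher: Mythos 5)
Your proposal is correct. For \eqref{norm_ung5} and part of \eqref{norm_ung1} it matches the paper's argument (Bernstein--Walsh on $\widehat{\mathcal E}_\rho$, an $L^\infty$--$L^2$ Nikolskii bound costing one power of $p$, affine scaling), but at two points you take a genuinely different route. For \eqref{norm_ung1}, the paper stays in $L^\infty$: after Bernstein it inserts the mean $\overline{\hat v}$, applies the $L^\infty$ Markov inequality (factor $p^2$) and the $L^\infty$--$L^2$ inverse estimate (factor $p$), and then uses the fractional Poincar\'e inequality $\|\hat v-\overline{\hat v}\|_{L^2(0,1)}\le C|\hat v|_{H^s(0,1)}$ of \cite{heuer2014equivalence}, which costs no power of $p$, before scaling; you instead differentiate first and trade $|v|_{H^1(\mathcal I)}$ for $|v|_{H^s(\mathcal I)}$ via the fractional inverse inequality $|v|_{H^1(\mathcal I)}\le Cp^{2(1-s)}h_{\mathcal I}^{s-1}|v|_{H^s(\mathcal I)}$. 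Your bookkeeping is right and even gives the marginally sharper exponent $p^{3-2s}\le p^3$, but note that this inverse inequality is exactly where the work hides: its cleanest derivation (the $L^2$ Markov bound $\|v'\|_{L^2}\le Cp^2\|v\|_{L^2}$, interpolated between $L^2$ and $H^1$ — which requires some care, e.g., a $p$-uniformly $H^1$-stable projection onto $\mathcal P_p$ to make the interpolation-on-a-subspace argument legitimate — followed by the same fractional Poincar\'e inequality to pass to the seminorm) uses the paper's two ingredients plus interpolation machinery, so the two approaches are closer than they appear, with the paper simply avoiding interpolation at the price of the cruder $p^3$. For part (ii) your argument is genuinely different and arguably preferable: the paper obtains \eqref{norm_ung4} by scaling and the compact embedding $H^s\subset L^2$ (a compactness, hence non-constructive, argument), whereas your restriction of the double integral to $x\in\mathcal I$ and $y$ in the half-interval where $v$ vanishes, with the explicit inner integral $\frac{1}{2s}\bigl(\delta^{-2s}-(\delta+h_{\mathcal I})^{-2s}\bigr)$ bounded below by its value at $\delta=h_{\mathcal I}$ (the monotonicity in $\delta$ checks out), is elementary, quantitative, and yields the explicit constant $C=\bigl(2s/(1-2^{-2s})\bigr)^{1/2}$ depending only on $s$, with the mirrored case following by symmetry exactly as you say.
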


\begin{proof}
 With the Bernstein inequality \cite[Chap.~4, Thm.~2.2]{devore1993constructive}
	\begin{align*}
		\|q\|_{L^{\infty}(\widehat{\mathcal{E}}_{\rho})} \le \rho^{p} \|q \|_{L^{\infty}(0,1)}~~~ \text{for all } q \in \mathcal{P}_{p}(0,1)
	\end{align*} 
	and inserting the mean $\overline{\hat{v}}:= \int_{0}^{1} \hat{v}(x)~dx $, we obtain
	\begin{align*}
			\left\|\dfrac{d}{dx}\hat{v}\right\|_{L^{\infty}(\widehat{\mathcal{E}}_{\rho})} \le C  \rho^{p} \left\|\dfrac{d}{dx}\hat{v} \right\|_{L^{\infty}(0,1)} = C  \rho^{p} \left\|\dfrac{d}{dx}(\hat{v} - \overline{\hat{v}}) \right\|_{L^{\infty}(0,1)}.
	\end{align*}
Employing inverse inequalities of Markov type, see  \cite[Thm~3.91, Thm.~3.92]{schwab1998p} together with a fractional Poincar\'e inequality, see \cite{heuer2014equivalence}, and a scaling argument, we arrive at
	\begin{align}\label{hilf123}
\left\|\dfrac{d}{dx}(\hat{v} - \overline{\hat{v}}) \right\|_{L^{\infty}(0,1)} &\leq \revision{C p^{2} \left\|\hat{v} - \overline{\hat{v}} \right\|_{L^{\infty}(0,1)}}
\leq Cp^3 \left\|\hat{v} - \overline{\hat{v}} \right\|_{L^{2}(0,1)} \\
&\leq Cp^3 \revision{|\hat{v}|}_{H^{s}(0,1)} \le C p^3  h_{\mathcal{I}}^{s-1/2} \revision{| v |}_{H^{s}(\mathcal{I})}.
	\end{align}
This shows (\ref{norm_ung1}). Inequality \eqref{norm_ung5} follows with the same arguments.

The fractional Poincar\'e inequality \eqref{norm_ung4} can be shown by a scaling argument and the compact embedding $H^s \subset L^2$; the fact that the 
seminorm appears on the right-hand side of \eqref{norm_ung4} is a consequence of the fact that $v$ is assumed to vanish on parts of ${\mathcal I}_{sym}$. 
See also \cite{acosta2017femfractional} for the proof of a closely related result. 
\end{proof}

The following lemma provides the key technical estimates for the quadrature errors appearing in the approximated bilinear and linear forms.

\begin{lemma}\label{lemma_quadrature_error}
	Let $\operatorname*{co}(T,T')$ denote the convex hull of two sets $T$ and $T'$. Let ${\mathcal T}_{\gamma}$ be a $\gamma$-shape regular mesh on $\Omega$. There exists a constant $\rho > 1 $ that depends  only on $\gamma$ and $s$ such that for all $v \in S_{0}^{r,1}(\mathcal{T}_{\gamma})$, $w \in S_{0}^{p,1}(\mathcal{T}_{\gamma})$ and $T$, $T' \in  \mathcal{T}_{\gamma} $ there holds
	\begin{align}\label{quad_error_drinnen}
		\left| I_{T,T'}(v,w) - Q^n_{T,T'}(v,w) \right| 
& \le \revision{C_{s,\rho,\gamma}} r^3 p^3\rho^{r+p-2n+1}|v|_{H^{s}(\operatorname*{co}(T,T'))}|w|_{H^{s}(\operatorname*{co}(T,T'))},  \\
	\label{quad_error_complement}
		\left| I_{T,\Omega^{c}}(v,w) - Q^n_{T,\Omega^{c}}(v,w) \right| &\le
			\revision{C_{s,\rho,\gamma}} r^3 p^3\rho^{r+p-2n+1}|v|_{\widetilde{H}^{s}(\Omega)}|w|_{\widetilde{H}^{s}(\Omega)}. 
	\end{align}
\end{lemma}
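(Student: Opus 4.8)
The plan is to prove Lemma~\ref{lemma_quadrature_error} by reducing each quadrature error to the one- or two-dimensional Gauss-error estimates of Lemma~\ref{lemma_quadabsch}, and then controlling the resulting $L^\infty$-norms over the Bernstein ellipse by the $H^s$-seminorms via Lemma~\ref{lemma_norm_approx}. The first step is to treat the separated case $\overline{T}\cap\overline{T'}=\emptyset$, which is cleanest: here the transformed integrand is a product of the two analytic (in fact polynomial) shape functions and the kernel $k_{T,T'}(x,y)=|(1-x)h_T+\operatorname{dist}_{T,T'}+yh_T'|^{-(1+2s)}$, and one applies the two-dimensional estimate \eqref{quad_err_2D}. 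The analyticity radius $\widetilde\rho$ of the kernel must be quantified: since the denominator stays bounded away from zero on a complex neighborhood whose size is controlled by $\operatorname{dist}_{T,T'}$ relative to the element sizes, $\gamma$-shape regularity guarantees a uniform $\rho>1$ depending only on $\gamma$ and $s$. The shape-function factors $\hat v_T$, $\hat w_{T'}$ are polynomials of degree $r$, $p$, so their $L^\infty(\widehat{\mathcal{E}}_\rho)$ norms are estimated by \eqref{norm_ung5}, producing the $\rho^{r}$, $\rho^{p}$ growth and the $rp$ algebraic factors, while $\rho^{-2n+1}$ comes from the quadrature estimate; combining gives the $\rho^{r+p-2n+1}$ rate.

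Next I would handle the identical-element case $T=T'$ and the adjacent case, where the transformed integrands in \eqref{case_idpanel} and \eqref{case_neighbored} involve the removable-singularity quotients such as $U(x,y)=(\hat v_T(1-x)-\hat v_{T'}(xy))/x$. The key point is that after separating the Gauss-Jacobi weight these quotients are \emph{polynomials} (as noted after \eqref{case_idpanel}), hence entire, so the only source of non-polynomiality is the smooth kernel factor $(h_T+yh_{T'})^{-(1+2s)}$ in the adjacent case (in the identical case the integrand is purely polynomial and the estimate degenerates but the bound still holds trivially, or one keeps the bounded factor $|x-xy|^{-2}x^2=(1-y)^{-0}$-type terms analytic). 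I would verify that these polynomial quotients extend holomorphically to $\widehat{\mathcal{E}}_\rho\times\widehat{\mathcal{E}}_\rho$ with $L^\infty$-norm controlled by $\rho^{r}p^{?}h^{\ldots}|v|_{H^s}$, using \eqref{norm_ung1} to pass from the difference quotient to $\tfrac{d}{dx}\hat v$ and hence to the seminorm; the division by $x$ (respectively $y$) costs one degree and one extra algebraic power, which is the origin of the $r^3p^3$ factors. The convex hull $\operatorname*{co}(T,T')$ on the right-hand side appears because the difference quotients couple values of $v$ on both $T$ and $T'$, so the seminorm must be taken over their joint convex hull.

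The complement estimate \eqref{quad_error_complement} is handled analogously, using the explicit one-dimensional reductions \eqref{case_complementary}, \eqref{quad_comp_schnitt1}, \eqref{quad_comp_schnitt2} and the one-dimensional quadrature bound \eqref{quad_err_1D}. Here the integrand is $\hat v_T(x)\hat w_T(x)$ times a kernel that is either smooth (interior element, Gauss-Legendre) or carries a boundary singularity aligned with the coordinate, which after extracting the Gauss-Jacobi weight $(1-x)^\alpha x^\beta$ leaves an analytic factor $h_T^{-2s}$ or similar. The right-hand side uses the full $\widetilde{H}^s(\Omega)$-seminorm because the complement contribution genuinely sees the exterior Dirichlet condition; this is where the Poincar\'e-type bound \eqref{norm_ung4} enters to convert $L^2$-control of $\hat v_T$ into seminorm control exploiting that $v$ vanishes outside $\Omega$.

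The main obstacle I anticipate is the careful quantification of the holomorphy domain and the $\rho$-uniformity across all element pairs. For separated elements one must check that the kernel's complex singularities stay outside $\widehat{\mathcal{E}}_\rho\times\widehat{\mathcal{E}}_\rho$ uniformly; the worst case is nearly-touching elements where $\operatorname{dist}_{T,T'}$ is comparable to $\min(h_T,h_{T'})$, and shape regularity is exactly what rescues a uniform $\rho>1$. For the adjacent case the subtle bookkeeping is tracking how the Duffy-type transformations and the division by $x$ or $y$ affect both the polynomial degree (controlling the $\rho^{r+p}$ exponent precisely) and the algebraic prefactors, and ensuring the kernel factor $(h_T+yh_{T'})^{-(1+2s)}$ extends holomorphically in $y$ with a radius independent of the (possibly very different) element sizes — again a consequence of $\gamma$-shape regularity. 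Once the holomorphy radii are pinned down uniformly, the remaining work is the routine assembly of the three ingredients (quadrature error, Bernstein-ellipse norm bound, Poincar\'e/inverse estimate) into the stated product form.
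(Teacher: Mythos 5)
Your overall architecture (case split into identical/adjacent/separated/complement pairs, Bernstein-ellipse quadrature estimates from Lemma~\ref{lemma_quadabsch}, difference quotients handled via the fundamental theorem of calculus and \eqref{norm_ung1}, and \eqref{norm_ung4} for the complement) matches the paper for the adjacent, identical, and complement cases. However, your treatment of the \emph{separated} case contains a genuine gap. You propose to apply \eqref{quad_err_2D} to the full integrand and to bound the polynomial factors by \eqref{norm_ung5}. This cannot deliver the statement of the lemma, for two reasons. First, \eqref{norm_ung5} produces the full norm $\|v\|_{H^s(T)}$, not the seminorm $|v|_{H^s(\operatorname*{co}(T,T'))}$ claimed in \eqref{quad_error_drinnen}; your assertion that the convex-hull seminorm appears ``because the difference quotients couple values of $v$ on both $T$ and $T'$'' is justified for the adjacent case (where \eqref{norm_ung1} bounds a \emph{derivative} and hence sees only the seminorm) but not here, where no derivative occurs. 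Second, and more seriously, the scaling fails to be uniform over element pairs: the kernel satisfies $\|k_{T,T'}\|_{L^\infty} \sim \operatorname{dist}_{T,T'}^{-1-2s}$ on the ellipse, the Jacobian contributes $h_T h_{T'}$, and \eqref{norm_ung5} contributes $h_T^{-1/2} h_{T'}^{-1/2}$, so your bound scales like $(h_T h_{T'})^{1/2}\operatorname{dist}_{T,T'}^{-1-2s} \lesssim \operatorname{dist}_{T,T'}^{-2s}$, which blows up for small separated pairs — exactly the situation occurring near the boundary of a geometric mesh. You correctly worried about the uniformity of the holomorphy \emph{radius} $\rho$, but the unaddressed problem is the \emph{amplitude} of the kernel on the ellipse.

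The paper closes this hole with two ingredients you do not have. It invokes \cite[Lem.~4.6]{melenk1998hp} to bound the Gauss error by the best approximation of the kernel \emph{alone} in ${\mathcal Q}_{r_c}$ with $r_c = 2n-p-r-1$ (exploiting exactness of degree $2n-1$ against the polynomial factors), times $L^2$-norms of $\hat{v}_T(x)-\hat{v}_{T'}(y)$ and $\hat{w}_T(x)-\hat{w}_{T'}(y)$; it then inserts the mean $\overline{v_{\operatorname*{co}(T,T')}}$ and applies the fractional Poincar\'e inequality \cite[Prop.~{2.2}]{heuer2014equivalence} on the convex hull, gaining the factor $h_{\operatorname*{co}(T,T')}^{2s}$ that, together with the shape-regularity bounds $h_T, h_{T'} \leq \gamma^{-1}\operatorname{dist}_{T,T'}$ and $h_{\operatorname*{co}(T,T')} \leq (1+2/\gamma)\operatorname{dist}_{T,T'}$, exactly neutralizes $\operatorname{dist}_{T,T'}^{-1-2s}(h_T+h_{T'})$ and yields both the pair-uniform constant and the convex-hull seminorms. (Your $L^\infty$-framework could in principle be repaired by subtracting this mean \emph{before} applying Bernstein's inequality, but as written your proposal does not do this.) Two further small points: in the identical case the paper notes the transformed integrand is a polynomial of degree $\leq r+p-1$, hence integrated \emph{exactly} for $n \geq \max(r,p)$, which is cleaner than your ``degenerate but trivial'' bound; and the factors $r^3 p^3$ arise from the Markov inequality ($p^2$) combined with the $L^\infty$--$L^2$ inverse estimate ($p^1$) in \eqref{hilf123}, not from the division by $x$, which the fundamental theorem of calculus absorbs at no algebraic cost.
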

\begin{proof}
We distinguish the cases of pairs of adjacent elements, identical pairs, well-separated pairs, and combinations of elements $T$ with $\Omega^c$. 

\emph{Case of adjacent elements:}
We start with the case for adjacent elements $T \neq T'$ with $\overline{T} \cap \overline{T'} \ne \emptyset$. Due to Lemma~\ref{lemma_quadabsch} it is sufficient to estimate the $L^\infty$-norms of the integrands in \eqref{case_neighbored}. As both integrands can be treated in the same way, we only consider the first one
	\begin{align*}
		 \widehat{g}_1(x,y):= h_{T}^{-1-2s}\dfrac{(\hat{v}_{T}(1-x)-\hat{v}_{T'}(xy))}{x} \cdot \dfrac{(\hat{w}_{T}(1-x)-\hat{w}_{T'}(xy))}{x} \cdot \frac{1}{|1+yh_{T'}/h_{T}|^{1+2s}}.
	\end{align*}
Note that the first two fractions of the product on the right-hand side have removable singularities and are therefore holomorphic on $\mathbb{C}$ in each variable. 
The function $y \mapsto |1+y  h_{T'}/h_{T}|  = \sqrt{(1+y h_{T'}/h_{T})^2} > 0 $ on the closed interval $[0,1]$ and therefore has a holomorphic extension to 
an ellipse $\widehat{\mathcal{E}}_{\rho}$ for some $\rho > 1$ that solely depends on $\gamma$ since $h_{T'}/h_{T} \leq 1/\gamma$ by shape regularity.  
We conclude that $\widehat{g}_1(\cdot,y)$ is holomorphic on ${\mathbb C}$ for fixed $y \in [0,1]$ and $\widehat{g}_1(x,\cdot)$ is holomorphic on $\widehat{\mathcal E}_{\rho}$ for fixed $x \in [0,1]$. 
Using that $\hat{v}_{T}(1) = \hat{v}_{T'}(0)$, the fundamental theorem of calculus implies 
for \revision{ $(x,y) \in [0,1] \times \widehat{\mathcal E}_\rho$ and for
$(x,y) \in \widehat{\mathcal E}_\rho \times [0,1]$}
	\begin{align*}
		\bigg|\frac{1}{x}(\hat{v}_{T}(1-x)-\hat{v}_{T'}(xy))\bigg|&= \biggg|\frac{1}{x}\left(\int\limits_{0}^{x} \dfrac{d}{dz}\hat{v}_{T}(1-z)dz-\int\limits_{0}^{xy} \dfrac{d}{dz}\hat{v}_{T'}(z)dz\right)\biggg| \\
		&\le 2  \max\left( \left\|\dfrac{d}{dz}\hat{v}_{T}\right\|_{L^{\infty}(\widehat{\mathcal{E}}_{\rho})}, \left\|\dfrac{d}{dz}\hat{v}_{T'}\right\|_{L^{\infty}(\widehat{\mathcal{E}}_{\rho})}\right).
	\end{align*} 
Analogously, the same can be shown for the function $\hat{w}$. 
With Lemma~\ref{lemma_norm_approx}, this implies 
	\begin{align*}
		\sup\limits_{y\in (0,1)}\|\widehat{g}_1(\cdot,y)\|_{L^{\infty}(\widehat{\mathcal{E}}_{\rho})} &+ \sup\limits_{x\in (0,1)}\|\widehat{g}_1(x,\cdot)\|_{L^{\infty}(\widehat{\mathcal{E}}_{\rho})} \\
		&\le \revision{C_{s,\rho,\gamma}} r^3 p^3\rho^{r+p}\max(|v|_{H^{s}(T)},|v|_{H^{s}(T')})\max(|w|_{H^{s}(T)},|w|_{H^{s}(T')}).
	\end{align*}
Together with \eqref{quad_err_2D} and $ \max(|v|_{H^{s}(T)},|v|_{H^{s}(T')}) \le |v|_{H^{s}(\operatorname*{co}(T,T'))}$, this finishes the proof for the case of adjacent elements $T,T'$.
\bigskip

\emph{Case of identical elements:}
	The case $T=T'$ follows \revision{with similar} arguments. We note that in this case the integrand 
	\begin{align}
		\widehat{g}_2(x,y):= \dfrac{(\hat{u}_{T}(x)-\hat{u}_{T}(xy)) (\hat{v}_{T}(x)-\hat{v}_{T}(xy))}{|x-xy|^{2}}h_{T}^{-1-2s}
	\end{align}
is a polynomial of degree $\leq r+p-1$ and thus is integrated exactly for $n \geq \max(r,p)$.
\bigskip

\emph{Case of well-separated elements:}
	For separated elements $\overline{T} \cap \overline{T'} = \emptyset$ the integrand is continuous. Thus, by  \cite[Lem.~4.6]{melenk1998hp}, the Gaussian quadrature error can be estimated by the best approximation error for the function $\widehat{g}_3(x,y):= |\operatorname{dist}_{T,T'}+(1-x)h_{T}+y h_{T'}|^{-1-2s}$ in $L^\infty$ using polynomials of maximal degree $r_{c}:=2n-p-r-1$ and $L^2$-norms of the polynomials $\hat{v}_{T}(x)-\hat{v}_{T'}(y)$ and $\hat{w}_{T}(x)-\hat{w}_{T'}(y)$: 
	\begin{align}\label{zwischen2}
		&\bigg| I_{T,T'} (v,w) - Q^n_{T,T'}(v,w) \bigg| 	 \le C p^{2}h_{T} h_{T'} \inf_{\widehat{q} \in  \mathcal{Q}_{r_{c}}((0,1)^2)}\|\widehat{g}_3-\widehat{q}\|_{L^{\infty}(\widehat{T}\times \widehat{T})}  \cdot  \nonumber \\
&\qquad\quad \Big(\int_{\widehat{T}} \int_{\widehat{T}}(\hat{v}_{T}(x)-\hat{v}_{T'}(y))^2 dy dx\Big)^{1/2}  \Big(\int_{\widehat{T}} \int_{\widehat{T}}(\hat{w}_{T}(x)-\hat{w}_{T'}(y))^2 dy dx\Big)^{1/2}  ,
	\end{align}
 where ${\mathcal Q}_{r_c}((0,1)^2)$ denotes the tensor product space ${\mathcal P}_{r_c}(0,1) \otimes {\mathcal P}_{r_c}(0,1) = 
\operatorname{span} \{(x,y) \mapsto x^i y^j\colon 0 \leq i,j \leq r_c\}$. 
Similarly to the case of adjacent elements, the function $\widehat g$ admits a holomorphic extension to $\widehat{\mathcal{E}}_{\rho}  \times \widehat{\mathcal{E}}_{\rho}$
for some $\rho > 1$ since $\widehat{g}_3(x,y)= ((\operatorname{dist}_{T,T'}+(1-x)h_{T}+y h_{T'})^2) ^{-1/2-s}$ and the argument of $(\cdot)^{-1-2s}$ is bounded away from 
$0$ for $(x,y) \in [0,1]^2$. In fact, we only require that for each fixed $x \in [0,1]$ the function $\widehat g_3(x,\cdot)$ can be extended holomorphically  to $\widehat{\mathcal E}_{\rho}$ and 
for each fixed $ y \in [0,1]$ the function $\widehat{g}_3(\cdot,y)$  can be extended holomorphically to $\widehat{\mathcal E}_{\rho}$. 
As in the case of adjacent element, we have by shape regularity 
$h_T/\operatorname{dist}_{T,T'} \leq 1/\gamma$ and $h_{T'}/\operatorname{dist}_{T,T'} \leq 1/\gamma$.  

We may employ Proposition~\ref{proposition_best_approx} and a tensor product argument akin to that employed in (\ref{2d_to_1d}) to 
get with inequality (\ref{2d_to_1d}) the existence of $\rho > 1$ such that
	\begin{align}
	\inf_{\widehat{q} \in \mathcal{Q}_{r_{c}}((0,1)^2)}\|\widehat{g}_3-\widehat{q}\|_{L^{\infty}(\widehat{T}\times \widehat{T})} \le \revision{C_{s,\rho,\gamma}} \rho^{-r_{c}} \operatorname{dist}_{T,T'}^{-1-2s}.
	\end{align}

For the remaining terms in \eqref{zwischen2}, we transform back to the physical elements, insert the mean $\overline{v_{\operatorname*{co}(T,T')}}:= \int_{\operatorname*{co}(T,T')} v(x) dx / h_{\operatorname*{co}(T,T')}  $  over the convex hull $co(T,T')$ of $T$ and $T'$ and integrate in one variable to obtain 
\begin{align}\label{zwischen3}
&\int_{\widehat{T}} \int_{\widehat{T}}(\hat{v}_{T}(x)-\hat{v}_{T'}(y))^2 dy dx =  h_T^{-1}h_{T'}^{-1}\int_{T} \int_{T'}(v_{T}(x)-v_{T'}(y))^2 dy dx \nonumber \\
&\qquad\leq 2h_T^{-1}h_{T'}^{-1}\int_{T} \int_{T'}(v(x)-\overline{v_{\operatorname*{co}(T,T')}})^2 + (\overline{v_{\operatorname*{co}(T,T')}}-v(y))^2 dy dx \nonumber \\
&\qquad= 2h_T^{-1}\|v-\overline{v_{\operatorname*{co}(T,T')}}\|_{L^2(T)}^2 + 2h_{T'}^{-1}\|v-\overline{v_{\operatorname*{co}(T,T')}}\|_{L^2(T')}^2. 
\end{align}
Both terms can be treated in the same way, we thus only focus on the first one.
Increasing the domain of integration to 
the convex hull $\operatorname*{co}(T,T')$ and employing a Poincar\'e inequality, see \cite[Prop.~{2.2}]{heuer2014equivalence}, gives
	\begin{align}
		\|v-\overline{v_{\operatorname*{co}(T,T')}}\|^{2}_{L^{2}(T)} \leq \|v-\overline{v_{\operatorname*{co}(T,T')}}\|^{2}_{L^{2}(\operatorname*{co}(T,T'))} \le \revision{C_s} h_{\operatorname*{co}(T,T')}^{2s}|v|^{2}_{H^{s}(\operatorname*{co}(T,T'))}.
	\end{align}
 Inserting everything into \eqref{zwischen2} gives
	\begin{align}
	\left| I_{T,T'}(v,w) - Q^{n}_{T,T'}(v,w) \right| \le C p^2\rho^{r+p-2n+1}\operatorname{dist}_{T,T'}^{-1-2s}(h_T+h_{T'})h_{\operatorname*{co}(T,T')}^{2s}|v|_{H^{s}(\operatorname*{co}(T,T'))}|w|_{H^{s}(\operatorname*{co}(T,T'))}.
\end{align}
	We note that, for shape regular meshes, we can estimate 
\begin{align*}
h_T & \leq \gamma^{-1} \operatorname{dist}_{T,T'},
& 
h_{T'} & \leq \gamma^{-1} \operatorname{dist}_{T,T'},
& 
h_{\operatorname{co}(T,T')} & \leq \operatorname{dist}_{T,T'} + h_{T} + h_{T'} \leq \operatorname{dist}_{T,T'}  \left(1 + \frac{2}{\gamma}\right). 
\end{align*}
Thus, 
there holds $\operatorname{dist}_{T,T'}^{-1-2s}(h_{T'}+h_{T})h^{2s}_{\operatorname*{co}(T,T')}\le (2/\gamma) (1 +2/\gamma)^{2s}$, which concludes the argument
for the case of separated elements. 
\bigskip

\emph{Case of combination of $T$ with $\Omega^c$:}
	For the complementary part, see (\ref{case_complementary}), we consider integrals of the form
	\begin{align}\label{comp_hilf1}
		h_{T} \int\limits_{\widehat{T}} \dfrac{\hat{v}_{T}(x)\hat{w}_{T}(x)}{| \operatorname{dist}_{T,\{-1\}} +  xh_{T}|^{2s}} \,dx.
	\end{align}
	We have to distinguish two cases. If $T$ is at the left boundary, $ -1 \in \overline{T}$ and therefore $\operatorname{dist}_{T,\revision{\{-1\}}} = 0$, we can treat the singular integral (\ref{comp_hilf1}) as a one dimensional version of the adjacent case. If $\operatorname{dist}_{T,\{-1\}} > 0$, the proof uses similar techniques as the separated case.
	The only difference is that, instead of the convex hull of two elements, the convex hull of the element and the boundary point $-1$ is used and 
\cite[Prop.~{2.2}]{heuer2014equivalence} is replaced with \eqref{norm_ung4} to bound the $L^{2}$-norms 
	\begin{align}
		\|v\|_{L^{2}(T)} \le \|v\|_{L^{2}(\operatorname*{co}(T,\{-1\}))} \le C \: h^{s}_{\operatorname*{co}(T,\{-1\})} \|v\|_{H^{s}(\Omega)}. 
	\end{align}
This finishes the proof.
\end{proof}

Now, the consistency errors follow from summation of the elementwise contributions. 

\begin{proof}[Proof of  Lemma~\ref{consistency_error_lemma_bilinear}]
	With the triangle inequality, basic integration and Lemma \ref{lemma_quadrature_error} we obtain
	\begin{align}\label{proof_of_consistency_error_lemma_bilinear}
 |a(\revision{v},\revision{w})-\widetilde{a}_{n}(\revision{v},\revision{w})| &\le \sum\limits_{T \in \mathcal{T}_{\gamma}}\sum\limits_{T' \in \mathcal{T}_{\gamma}} 	\left| I_{T,T'}(v,w) - Q^{n}_{T,T'}(v,w) \right| 
		+2 \sum\limits_{T \in \mathcal{T}_{\gamma}} 	\left| I_{T,\Omega^{c}}(v,w) - Q_{T,\Omega^{c}}^n(v,w) \right| \notag  \\
		&\le \revision{C_{s,\rho,\gamma}} r^3 p^3 \rho^{r+p-2n+1}\bigg(\sum\limits_{T \in \mathcal{T}_{\gamma}} \sum\limits_{T' \in \mathcal{T}_{\gamma}} |v|_{H^{s}(\operatorname*{co}(T,T'))}|w|_{H^{s}(\operatorname*{co}(T,T'))} \notag  \\
		&\quad+2 \sum\limits_{T \in \mathcal{T}_{\gamma}} \|\revision{v}\|_{\widetilde{H}^{s}(\Omega)}\|\revision{w}\|_{\widetilde{H}^{s}(\Omega)} \bigg) \notag  \\
		&\le \revision{C_{s,\rho,\gamma}} (\#\mathcal{T}_{\gamma})^{2} r^3 p^{3} \rho^{r+p-2n+1}\|\revision{v}\|_{\widetilde{H}^{s}(\Omega)} \|\revision{w}\|_{\widetilde{H}^{s}(\Omega)},
	\end{align}
which finishes the proof.
\end{proof}

\begin{proof}[Proof of Lemma~\ref{consistency_error_lemma_functional}]
%	The quadrature error on the element $T$ can be estimated by with an arbitrary $ \rho > 1$. 
As $f$ is analytic on $[0,1]$ there exists an analytic extension to a Bernstein ellipse $\widehat{\mathcal{E}}_{\rho}$ for some $\rho>1$.
Using \eqref{quad_err_1D} of Lemma~\ref{lemma_quadabsch} gives for each element
\begin{align*}
	\left|\int_{\widehat{T}}\widehat{f}_{T}(x)\widehat{v}_{T}(x) dx-GL_{n}(\widehat{f}_{T} \widehat{v}_{T})\right| &\le C\rho^{-2n+1} \|\widehat{f}_{T} \widehat{v}_{T}\|_{L^{\infty}(\widehat{\mathcal{E}}_{\rho})} \le  \revision{C_{\rho,f}} \rho^{-2n+1} \|\widehat{v}_{T}\|_{L^{\infty}(\widehat{\mathcal{E}}_{\rho})} \\
& \stackrel{\eqref{norm_ung5}}{\leq} \revision{C_{s,\rho,f}} p \rho^{p-2n+1} h_T^{-1/2} \|v\|_{\revision{{H}^{s}(T)}}.
\end{align*} 
Summation over all elements $T \in \mathcal{T}_{\gamma}$ together with the Cauchy-Schwarz inequality gives
\begin{align*}
|l(v)-\widetilde{l}_n(v)| &\leq \sum\limits_{T \in \mathcal{T}_{\gamma}} h_T	\left|\int_{\widehat{T}}\widehat{f}_{T}(x)\widehat{v}_{T}(x) dx-GL_{n}(\widehat{f}_{T} \widehat{v}_{T})\right| \\
&\leq  \revision{C_{s,\rho,f}} p \rho^{p-2n+1}  \sum\limits_{T \in \mathcal{T}_{\gamma}}  h_T^{1/2} \|v\|_{\revision{{H}^{s}(T)}} \leq  \revision{C_{s,\rho,f}} p \rho^{p-2n+1} \revision{\sqrt{|\Omega|}}\Big( \sum\limits_{T \in \mathcal{T}_{\gamma}}  \|v\|^2_{\revision{{H}^{s}(T)}} \Big)^{1/2} \\
&\leq \revision{C_{s,\rho,f}}\; p \rho^{p-2n+1}  \|v\|_{H^{s}(\Omega)}, 
\end{align*}
which finishes the proof.
\end{proof}

\section{\revision{Outlook: the multidimensional case on shape regular meshes}} \label{sec:2D}
%-------------------------
In this section, we discuss how the preceding $1d$-analysis can be generalized to the multidimensional case $d > 1$ for bounded polyhedral Lipschitz domains $\Omega \subset \mathbb{R}^d$. In this case, 
the weak formulation is given by: Find  $u \in \widetilde H^s(\Omega)$ such that
\begin{align}\label{weak_formulationMulti}
	a(u,v):=\dfrac{C(s,d)}{2}\int_{\mathbb{R}^d}\int_{\mathbb{R}^d}\dfrac{(u(\vec{x})-u(\vec{y}))(v(\vec{x})-v(\vec{y}))}{|\vec{x}-\vec{y}|^{d+2s}} \,d\vec{y}\,d\vec{x} = \skp{f,v}_{L^{2}(\Omega)} =: l(v)
\end{align}
for all $v \in \widetilde H^s(\Omega)$, where $C(s,d):= 2^{2s}s\Gamma(s+d/2)/(\pi^{d/2}\Gamma(1-s))$ (see, e.g., \cite{acosta2017femfractional}).
Thus, we have to numerically compute integrals of the form
\begin{align}
\label{2D_int}
	I_{S_{1},S_{2}}(v,w) &:= \int\limits_{S_{1}} \int\limits_{S_{2}} \dfrac{(v(\vec{x})-v(\vec{y}))(w(\vec{x})-w(\vec{y}))}{|\vec{x}-\vec{y}|^{d+2s}} \,d\vec{y}\, d\vec{x}, \\
\label{2D_int-a}
I_{S_{1},\Omega^c} (v,w) &:=
	 \int\limits_{S_{1}} v(\vec{x}) w(\vec{x})  \int\limits_{\Omega^c} \frac{1}{|\vec{x}-\vec{y}|^{d+2s}} \,d\vec{y}\, d\vec{x},
\end{align}
where $S_{1}$ and $S_{2}$ denote $d$-dimensional simplices. 

In the following, we will consider regular, $\gamma$-shape regular triangulations ${\mathcal T}_\gamma$ of $\Omega$, i.e., decompositions of $\Omega$ into simplices. 
$\gamma$-shape regularity means that the affine element maps $F: \widehat{S} \rightarrow S$ from the reference simplex $\widehat{S}$ to $S \in {\mathcal T}_\gamma$ with 
$\operatorname{diam} S = h_S$ 
satisfy $\|F^\prime_S \|_{L^\infty} \leq \gamma h_S$ and $\|(F^\prime)^{-1}\|_{L^\infty} \leq \gamma h_S^{-1}$. 
As usual, we set $S^{p,1}_0({\mathcal T}_\gamma):= \{u \in H^1_0(\Omega)\,|\, u|_S \in {\mathcal P}_p({\mathbb R}^d) \quad \forall S \in {\mathcal P}_p\}$, where
${\mathcal P}_p({\mathbb R}^d)$ denotes the space of $d$-variate polynomials of (total) degree $p$. We will also require the tensor-product space
${\mathcal Q}_p({\mathbb R}^d):=\operatorname{span} \{x_1^{\alpha_1}\cdots x_d^{\alpha_d}\,|\, 0 \leq \alpha_1,\ldots,\alpha_d \leq p\}$. 
%-------------------------
\subsection{Quadrature on pairs of simplices} 
%-------------------------
In the present case of shape regular triangulations, techniques developed in \cite{chernov-schwab12} can be adapted to numerically integrate \eqref{2D_int}. Similarly to the case $d=1$ in the previous sections, singularities in the integrand can be transformed such that suitable combinations of Gauss-Legendre and Gauss-Jacobi quadrature can be employed. In the following we state the main result of \cite{chernov-schwab12} regarding numerical integration of certain singular integrals.
%The work \cite{chernov-schwab12} shows an Gauss-Jacobi quadrature approximation that is build on the following Proposition \ref{2d_proposition}.
\begin{proposition}[\cite{chernov-schwab12}]
\label{2d_proposition}
Let $\mathcal{T}_{\gamma}$ be a $\gamma$-shape regular mesh and $S_{1}, S_{2} \in \mathcal{T}_{\gamma}$ be closed simplices in $\mathbb{R}^{d}$ with $k := \text{dim}(S_{1} \cap S_{2})$ (setting $k:=-1$ if $S_{1}\cap S_{2} = \emptyset$) and consider integrals of the form
	\begin{align}
		I = \int\limits_{S_{1}} \int\limits_{S_{2}} | \vec{x}-\vec{y} |^{\alpha} F(\vec{x},\vec{y},\vec{x}-\vec{y}) \: d\vec{y} \: d\vec{x},
	\end{align}
	where $\alpha \in \mathbb{R}$ and $F$ is a real analytic function, i.e., $F \in C^{\omega}(S_{1} \times S_{2} \times (S_{2}-S_{1}))$.

 Then, there exist $K_{k} \in \mathbb{N}$ depending only on $k$ and polynomial transformations $\Phi_{j}$, $j=0,\dots,K_k$ of degree 
$q_\Phi := \max_{j} \operatorname*{deg}(\Phi_j)$, depending only on $d$, such that  
 the integral $I$ takes the form
	\begin{align}\label{2d_trans_basis}
		I = \sum\limits_{j=0}^{K_{k}} \int\limits_{~[0,1]^{2d}} F\circ \Phi_{j}(\vec{\hspace{0.5mm}t}) \: \mathcal{R}_j(\vec{\hspace{0.5mm}t}) \: J_{\Phi_{j}}(\vec{\hspace{0.5mm}t}) \: t_{1}^{\alpha+2d-k-1} \,d\vec{\hspace{0.5mm}t},
	\end{align}
where $\mathcal{R}_j \in C^{\omega}([0,1]^{2d})$ are real analytic functions given by %denotes an in every component real analytic function and is defined by

	\begin{align}
		\mathcal{R}_j(\vec{\hspace{0.5mm}t}) := \dfrac{ |(\vec{x}-\vec{y}) \circ \Phi_{j}(\vec{\hspace{0.5mm}t})|^{\alpha} }{t_{1}^{\alpha}}
	\end{align}
and the Jacobians $J_{\Phi_{j}}$ are polynomials of degree at most $d(q_\Phi-1)$.

	In particular, the condition $\alpha > k-2d$ ensures that \eqref{2d_trans_basis} is integrable and $t_{1}^{\alpha+2d-k-1}$ can be used as a Gauss-Jacobi weight function.
\end{proposition}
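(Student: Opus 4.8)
The plan is to follow the regularizing-transformation strategy of Sauter--Schwab type, as carried out in \cite{chernov-schwab12}: classify the geometric configuration of the pair $(S_1,S_2)$ by the dimension $k$ of the contact set $S_1 \cap S_2$, and for each case construct a finite family of polynomial Duffy-type maps $\Phi_j \colon [0,1]^{2d} \to S_1 \times S_2$ that align the singular set $\{\vec{x} = \vec{y}\}$ with the hyperplane $\{t_1 = 0\}$. The number $K_k$ of maps is purely combinatorial and depends only on $k$ (and $d$), since it counts the subdivisions of the product simplex $S_1 \times S_2$ on which a single smooth branch of the transformation is valid. Because each $\Phi_j$ is polynomial and $F$ is real analytic in all three of its arguments (including the relative variable $\vec{x}-\vec{y}$, even as it tends to $0$), the composite $F \circ \Phi_j$ is automatically real analytic on $[0,1]^{2d}$, so the singularity is confined entirely to the factor $|\vec{x}-\vec{y}|^{\alpha}$.

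First I would reduce to a reference configuration: pulling $S_1$ and $S_2$ back to the reference simplex $\widehat{S}$ via their affine element maps and relabeling vertices so that the $k$ shared vertices come first, which fixes the combinatorial type of the contact. Next I would introduce the regularizing coordinates, writing the relative variable radially so that one coordinate $t_1$ plays the role of the distance to the singular set while the remaining $2d-1$ coordinates parametrize the angular directions and the position along the contact set. The decisive geometric content is the identity $|(\vec{x}-\vec{y})\circ \Phi_j(\vec{t})| = t_1\, g_j(\vec{t})$, where $g_j$ is real analytic and bounded away from $0$ on the closed cube; this is exactly what makes $\mathcal{R}_j = g_j^{\alpha}$ real analytic and is where one exploits that, after factoring out $t_1$, the normalized direction $(\vec{x}-\vec{y})/t_1$ never degenerates.

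The change of variables then contributes the Jacobian $J_{\Phi_j}$, a polynomial of degree at most $d(q_\Phi - 1)$ by direct differentiation of the polynomial components of $\Phi_j$, together with the scaling factor $t_1^{2d - k - 1}$ arising from the $(2d-k)$-dimensional radial blow-up transverse to the $k$-dimensional singular set (a codimension-$m$ radial coordinate carries a measure factor $r^{m-1}$, here $m = 2d-k$). Collecting $|\vec{x}-\vec{y}|^{\alpha} = t_1^{\alpha} \mathcal{R}_j$ with this scaling yields the stated weight $t_1^{\alpha + 2d - k - 1}$, which is integrable precisely when $\alpha + 2d - k - 1 > -1$, i.e.\ $\alpha > k - 2d$, and is then admissible as a Gauss--Jacobi weight $t_1^{\beta}$ with $\beta = \alpha + 2d - k - 1 > -1$. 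Summing the $K_k+1$ subdomain contributions gives the representation \eqref{2d_trans_basis}.

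I expect the main obstacle to be the explicit construction of the maps $\Phi_j$ for each contact dimension $k = -1, 0, 1, \dots, d$ and, above all, the verification that $|\vec{x}-\vec{y}|/t_1 = g_j$ extends to a nonvanishing real analytic function on the closed cube: one must check that the vanishing of $|\vec{x}-\vec{y}|$ is confined to $t_1 = 0$, that the numerator vanishes there to exactly matching first order, and that $g_j$ stays bounded below by compactness of $[0,1]^{2d}$. Once this geometric lemma is established for each $k$, tracking the polynomial degrees of $\Phi_j$ and $J_{\Phi_j}$ and assembling the sum is routine bookkeeping.
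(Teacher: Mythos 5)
Your proposal takes essentially the same route as the paper, which in fact offers no independent proof but defers entirely to \cite[Sec.~3, Thm.~4.1, Rem.~2]{chernov-schwab12}: the construction you sketch --- case distinction by the contact dimension $k$, affine pullback and vertex relabeling, polynomial Duffy-type maps aligning the singular set with $\{t_1 = 0\}$, the key factorization $|(\vec{x}-\vec{y})\circ\Phi_j(\vec{\hspace{0.5mm}t})| = t_1\, g_j(\vec{\hspace{0.5mm}t})$ with $g_j$ analytic and bounded away from zero, and the measure factor $t_1^{2d-k-1}$ from the transverse blow-up --- is precisely the content of the cited reference. Your observation that the condition $\alpha > k-2d$ enters only to guarantee integrability of the weight $t_1^{\alpha+2d-k-1}$ also matches the paper's explicit remark on this point, so the two arguments coincide.
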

\begin{proof}
	See \cite[Sec.~3]{chernov-schwab12} for the explicit construction of the transformations $\Phi_j$ and the resulting polynomial degree $q_\Phi$ as well as \cite[Thm.~{4.1}, Rem.~{2}]{chernov-schwab12}, where a slightly different formulation is shown, which even includes the more general case that $F$ is in a Gevrey class. In \cite{chernov-schwab12} the condition $\alpha > k-2d$ is required, but it follows from inspection of
the proof that it is only needed to ensure integrability of the integrand. 
\end{proof}

\begin{remark}
\begin{enumerate}[label=(\roman*)]
\item The transformations $\Phi_j$ are, similarly to the case $d=1$, combinations of affine transformations and Duffy-like transformations that transform simplices to hypercubes and thus are polynomials. The 
 parameter $K_k \in \mathbb{N}$ accounts for different cases that have to be treated with different transformations (as can be seen in the case $d=1$ as well, compare (\ref{case_idpanel}) and (\ref{case_neighbored})). If $d>1$, this requires even more cases; however, structurally they are all similar, which allows for the compact notation.

\item An important observation of (\ref{2d_trans_basis}) is that the transformations (by employing relative coordinates) can be constructed such that the singularity of the function $|\vec{x}-\vec{y}|^{-d-2s}$ appears after transformation only in a single variable labelled $t_1$.

\item  Since the term $t_{1}^{\alpha+2d-k-1}$ with $ \alpha +2d-k-1 >-1 $ can be handled as a weight function with Gauss-Jacobi quadrature, an approximation to \eqref{2d_trans_basis} can be achieved by a tensor quadrature rule. 
\eremk
\end{enumerate}
\end{remark}

Unfortunately, the integrals in \eqref{2D_int} do not fulfill the requirement of the final statement of Proposition~\ref{2d_proposition} to be integrable since $\alpha = -d-2s > k-2d$ does not hold for all $0 \le k \le d$ and $s \in (0,1)$. Therefore, we have to modify the analysis of \cite{chernov-schwab12} to suit our integrand by showing that, after application of the transformations $\Phi_j$, 
the term $ (v(x)-v(y))(w(x)-w(y)) $ takes the form $t_{1}^{2} \: q(t_{1},\dots, t_{2d})$ where $q$ is a polynomial in $2d$ variables, i.e., $ q \in \mathcal{P}_k(\mathbb{R}^{2d}) $ for some $k$. 
Consequently, the singular term in the integral takes the form 
$t_{1}^{\tilde{\alpha}}$ with $\tilde{\alpha} := \alpha+2d-k+1 > -1$. 
More precisely, we have the following Corollary~\ref{corollar_2d}, which can be seen as an extension of \cite[Thm.~{4.1}]{chernov-schwab12} to the present specific case \eqref{2D_int}.

\begin{corollary}\label{corollar_2d}
Let $\mathcal{T}_{\gamma}$ be a $\gamma$-shape regular mesh and $S_{1}, S_{2} \in \mathcal{T}_{\gamma}$ be closed simplices in $\mathbb{R}^{d}$ with $k := \text{dim}(S_{1} \cap S_{2})$ (setting $k:=-1$ if $S_{1}\cap S_{2} = \emptyset$) and, for $v,w \in S_{0}^{p,1}(\mathcal{T}_{\gamma})$ consider the integral
	\begin{align}\label{corollar_2d_eq1}
		I_{S_{1},S_{2}}(v,w) := \int\limits_{S_{1}} \int\limits_{S_{2}} \dfrac{(v|_{S_{1}}(\vec{x})-v|_{S_{2}}(\vec{y}))(w|_{S_{1}}(\vec{x})-w|_{S_{2}}(\vec{y}))}{|\vec{x}-\vec{y}|^{d+2s}} \,d\vec{y}\, d\vec{x}.
	\end{align}
Then, employing, for $k \ge 0$, the polynomial transformations $\Phi_j$ of Proposition~\ref{2d_proposition} of degree (at most) $q_{\Phi}$ the integral $I_{S_{1},S_{2}}(v,w)$ takes the form
	\begin{align}\label{corollar_2d_eq2}
		I_{S_{1},S_{2}}(v,w) =  \int\limits_{~[0,1]^{2d}} \sum\limits_{j=0}^{K_{k}} \underset{=: F_{j}}{\underbrace{ P_{v,j}(\vec{\hspace{0.5mm}t}) \: P_{w,j}(\vec{\hspace{0.5mm}t}) \: \mathcal{R}_{j}(\vec{\hspace{0.5mm}t}) \: J_{\Phi_{j}}(\vec{\hspace{0.5mm}t)} }} \: t_{1}^{2-2s+d-k-1} \,d\vec{\hspace{0.5mm}t}.
	\end{align}
	Here, the Jacobians $J_{\Phi_{j}}$ are polynomials of degree (at most) $\leq d(q_\Phi-1)$, $R_j \in C^{\omega}([0,1]^{2d})$ are analytic functions given by 
	\begin{align}
		\mathcal{R}_{j}(\vec{\hspace{0.5mm}t}) := \dfrac{t_{1}^{d+2s}}{|(\vec{x}-\vec{y}) \circ \Phi_{j}(\vec{\hspace{0.5mm}t})|^{d+2s}},
	\end{align}
 and  $P_{v,j}, P_{w,j} \in \mathcal{P}(\mathbb{R}^{2d})$ are polynomials of degree (at most) $\leq p q_\Phi-1$, defined by
	\begin{align}\label{corollar_2d_poly_def}
		P_{v,j}(\vec{\hspace{0.5mm}t}) := \dfrac{(v|_{S_{1}}-v|_{S_{2}}) \circ \Phi_{j} (\vec{\hspace{0.5mm}t})}{t_{1}} ~~\text{ and }~~ P_{w,j}(\vec{\hspace{0.5mm}t}) := \dfrac{(w|_{S_{1}}-w|_{S_{2}}) \circ \Phi_{j} (\vec{\hspace{0.5mm}t})}{t_{1}}.
	\end{align}
	For $k = -1$, we get the form 
		\begin{align}\label{corollar_2d_eq2_getrennt}
		I_{S_{1},S_{2}}(v,w) =  \int\limits_{~[0,1]^{2d}} \underset{=: F_{-1}}{\underbrace{ P_{v,-1}(\vec{\hspace{0.5mm}t}) \: P_{w,-1}(\vec{\hspace{0.5mm}t})  \: \mathcal{R}_{-1}(\vec{\hspace{0.5mm}t}) \: J_{\Phi_{-1}}(\vec{\hspace{0.5mm}t}) }}  \,d\vec{\hspace{0.5mm}t},
	\end{align}
with polynomial Jacobian $J_{\Phi_{-1}}$, $\mathcal{R}_{-1}(\vec{\hspace{0.5mm}t}) := |(\vec{x}-\vec{y}) \circ \Phi_{-1}(\vec{\hspace{0.5mm}t})|^{-d-2s}$ analytic  and 
polynomials $P_{v,-1}, P_{w,-1} \in \mathcal{P}_{p q_\Phi-1}(\mathbb{R}^{2d})$ defined by
	\begin{align}\label{corollar_2d_poly_def_getrennt}
		P_{v,-1}(\vec{\hspace{0.5mm}t}) := (v|_{S_{1}}-v|_{S_{2}}) \circ \Phi_{-1} (\vec{\hspace{0.5mm}t}) ~~\text{ and }~~ P_{w,-1}(\vec{\hspace{0.5mm}t}) := (w|_{S_{1}}-w|_{S_{2}}) \circ \Phi_{-1} (\vec{\hspace{0.5mm}t}).
	\end{align}
\end{corollary}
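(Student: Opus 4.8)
The plan is to invoke Proposition~\ref{2d_proposition} directly with exponent $\alpha = -d-2s$ and with the real-analytic (in fact polynomial) integrand $F(\vec{x},\vec{y},\vec{x}-\vec{y}) := (v|_{S_1}(\vec{x})-v|_{S_2}(\vec{y}))(w|_{S_1}(\vec{x})-w|_{S_2}(\vec{y}))$, and then to extract two additional powers of $t_1$ from the difference structure of $F$. Applying the proposition gives
\[
I_{S_1,S_2}(v,w) = \sum_{j=0}^{K_k}\int_{[0,1]^{2d}} (F\circ\Phi_j)(\vec{t})\,\mathcal{R}_j(\vec{t})\,J_{\Phi_j}(\vec{t})\, t_1^{\alpha+2d-k-1}\,d\vec{t},
\]
with $\mathcal{R}_j(\vec{t}) = |(\vec{x}-\vec{y})\circ\Phi_j(\vec{t})|^{\alpha}/t_1^{\alpha} = t_1^{d+2s}/|(\vec{x}-\vec{y})\circ\Phi_j(\vec{t})|^{d+2s}$, which is precisely the $\mathcal{R}_j$ appearing in the corollary. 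The analyticity and nonvanishing of $\mathcal{R}_j$ on $[0,1]^{2d}$ encode that $(\vec{x}-\vec{y})\circ\Phi_j(\vec{t})$ is divisible by $t_1$, so that $\vec{x}\circ\Phi_j = \vec{y}\circ\Phi_j$ on the hyperplane $\{t_1=0\}$.

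The key step is to show that each transformed difference is itself divisible by $t_1$. Since $v|_{S_1}$ and $v|_{S_2}$ are polynomials and $\Phi_j$ is a polynomial map, $(v|_{S_1}-v|_{S_2})\circ\Phi_j$ is a polynomial in $\vec{t}$ of degree at most $pq_\Phi$. At $t_1=0$ the common point $\vec{p} := \vec{x}\circ\Phi_j|_{t_1=0} = \vec{y}\circ\Phi_j|_{t_1=0}$ lies in $S_1\cap S_2$, because the transformations map the $\vec{x}$- and $\vec{y}$-components into $S_1$ and $S_2$, respectively. As $v\in S_0^{p,1}(\mathcal{T}_\gamma)\subset H^1_0(\Omega)$ is continuous, its two polynomial representatives agree on the shared face, $v|_{S_1}(\vec{p}) = v(\vec{p}) = v|_{S_2}(\vec{p})$; hence $(v|_{S_1}-v|_{S_2})\circ\Phi_j$ vanishes identically on $\{t_1=0\}$ and is therefore divisible by $t_1$. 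This produces the polynomial $P_{v,j}$ of \eqref{corollar_2d_poly_def} with $\deg P_{v,j}\le pq_\Phi-1$, and the same argument gives $P_{w,j}$. Substituting $F\circ\Phi_j = t_1^2\,P_{v,j}\,P_{w,j}$ and $\alpha = -d-2s$ collapses the Gauss--Jacobi weight to $t_1^{\alpha+2d-k-1+2} = t_1^{2-2s+d-k-1}$, yielding \eqref{corollar_2d_eq2}. For the disjoint case $k=-1$ the kernel is nonsingular: $\mathcal{R}_{-1}$ is already analytic, there is no $t_1$-weight to adjust, and no division is performed, so $P_{v,-1},P_{w,-1}$ are the plain compositions and \eqref{corollar_2d_eq2_getrennt} follows.

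The main obstacle is rigorously justifying the divisibility claim, i.e., that the transformed difference vanishes on $\{t_1=0\}$. This rests on two ingredients: the structural property of the Chernov--Schwab transformations that $\vec{x}$ and $\vec{y}$ are mapped to a common point of the shared face $S_1\cap S_2$ exactly when $t_1=0$ (which I would read off from the explicit construction in \cite[Sec.~3]{chernov-schwab12} that already underlies the analyticity of $\mathcal{R}_j$), and the $H^1$-conformity of the discrete space, which forces $v|_{S_1}$ and $v|_{S_2}$ to coincide on $S_1\cap S_2$. Once divisibility is in hand, the remaining check is routine: the new exponent $\tilde\alpha := 2-2s+d-k-1 = d-k+1-2s$ satisfies $\tilde\alpha > -1$ because $d-k\ge 0$ and $s<1$ give $d-k+2 > 2s$, so $t_1^{\tilde\alpha}$ is an admissible Gauss--Jacobi weight; the degree bounds on the Jacobians $J_{\Phi_j}$ are inherited unchanged from Proposition~\ref{2d_proposition}.
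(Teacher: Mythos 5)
Your proposal is correct and takes essentially the same route as the paper's proof: invoke Proposition~\ref{2d_proposition} with $\alpha=-d-2s$ and $F=(v|_{S_1}(\vec{x})-v|_{S_2}(\vec{y}))(w|_{S_1}(\vec{x})-w|_{S_2}(\vec{y}))$, then reduce everything to showing that $P_{v,j},P_{w,j}$ are polynomials, which follows because continuity of $v,w\in S_0^{p,1}(\mathcal{T}_\gamma)$ across the shared face $S_1\cap S_2$ forces $(v|_{S_1}-v|_{S_2})\circ\Phi_j$ to vanish on $\{t_1=0\}$ and hence be divisible by $t_1$, after which the weight exponent collapses to $2-2s+d-k-1>-1$. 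Your explicit divisibility argument (that $\Phi_j$ maps $\{t_1=0\}$ into the diagonal over $S_1\cap S_2$, so the composed difference vanishes identically on that hyperplane and $t_1$ factors out) is a slightly more careful rendering of the step the paper compresses into an appeal to the fundamental theorem of algebra, and your treatment of the $k=-1$ case matches the paper's reference to Steps 1 and 2 of the Chernov--Schwab transformations.
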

\begin{proof}
	For $k \ge 0$, with Proposition~\ref{2d_proposition} it is only left to show that $P_{v,j}$ and $P_{w,j}$ from \eqref{corollar_2d_poly_def} are polynomials. We only prove the statement for $P_{v,j}$.
	
	Since $v \in S_{0}^{p,1}(\mathcal{T}_{\gamma})$ is a piecewise continuous polynomial, the singularity points $\vec{x} = \vec{y}$ of $|\vec{x}-\vec{y}|^{-d-2s}$ are a subset of the roots of the polynomial $(v|_{S_{1}}(\vec{x})-v|_{S_{2}}(\vec{y}))$. Since $\Phi_{j}$ 
is a polynomial, it follows that $(v|_{S_{1}}-v|_{S_{2}}) \circ \Phi_{j}$ is also a polynomial (of degree bounded by $pq_\Phi$) that vanishes at the singularities of $ |\vec{x}-\vec{y}|^{-d-2s} \circ \Phi_{j} $. So the separated singularity $t_{1}$ has to be a root of $(v|_{S_{1}}-v|_{S_{2}}) \circ \Phi_{j}$. The fundamental theorem of algebra finishes the proof.
	
	For $k = -1$ the proof follows immediately from Step~1 and 2 of the transformations of \cite[Sec.~{3}]{chernov-schwab12}.
\end{proof}
\cite[Thm.~{5.4}]{chernov-schwab12} also asserts exponential convergence of a suitable combination of Gauss-Jacobi and Gauss-Legendre quadrature employed to integrands covered by Proposition~\ref{2d_proposition}.

\begin{proposition}\label{2d_proposition2}
	Let $\mathcal{R} \in C^{\omega}([0,1]^{d'})$ and $\beta_{1} > -1$. Then, there exist $C$, $b>0$ independent of $d'$ 
such that for all $n\in \mathbb{N}$ there holds 
	\begin{align}\label{2d_proposition2_formel}
		\bigg| \int_{[0,1]^{d'}} t_{1}^{\beta_{1}} \mathcal{R}(\vec{\hspace{0.5mm} t }) \, d\vec{\hspace{0.5mm} t } - GJ_{n,t_{1}}^{0,\beta_{1}} \circ GL_{n,t_{2}}\circ \cdots \circ GL_{n,t_{d'}}(\mathcal{R}) \bigg| \le C \exp(-b \: N^{1/d'}),
	\end{align}
	where $N = \mathcal{O}(n^{d'})$ is the total number of quadrature points.
\end{proposition}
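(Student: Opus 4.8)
The plan is to derive \eqref{2d_proposition2_formel} by tensorizing the univariate Gaussian quadrature error estimate, following the strategy that already produced the two-dimensional bound \eqref{2d_to_1d}. First I would use that $\mathcal{R}$ is real analytic on the compact cube $[0,1]^{d'}$ to obtain a single radius $\widetilde\rho>1$ such that, for every coordinate direction and every fixed choice of the remaining variables in $[0,1]^{d'-1}$, the resulting univariate function extends holomorphically to the scaled Bernstein ellipse $\widehat{\mathcal{E}}_{\widetilde\rho}$, with an $L^\infty(\widehat{\mathcal{E}}_{\widetilde\rho})$-bound that is uniform over the compact ranges of the frozen variables. Fixing any $1<\rho<\widetilde\rho$, let $M$ denote the largest $L^\infty(\widehat{\mathcal{E}}_\rho)$-norm occurring among all these univariate holomorphic extensions; it is finite and independent of the quadrature order $n$.

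Next I would peel off the quadrature rules one variable at a time. Abbreviating the full rule by $Q := GJ^{0,\beta_1}_{n,t_1}\circ GL_{n,t_2}\circ\cdots\circ GL_{n,t_{d'}}$ and inserting the $d'$ intermediate partial tensor rules, a telescoping identity, exactly as in the derivation of \eqref{2d_to_1d}, represents the total error as a sum of $d'$ terms, where the $k$-th term is a univariate quadrature error in the variable $t_k$ with the remaining variables acted on either by the exact integral or by the already-applied Gauss rules. The decisive structural point, inherited from \eqref{2d_to_1d}, is that the Gauss-Legendre weights sum to $1$ and the Gauss-Jacobi weights sum to $\int_0^1 t_1^{\beta_1}\,dt_1=(\beta_1+1)^{-1}$, so these partial rules contribute only bounded constants; consequently the total error is bounded by a sum of $d'$ suprema, over the frozen variables, of purely univariate quadrature errors.

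Each univariate error I would control by the best-approximation estimate \eqref{1d_quad_approx}, which applies verbatim to the Gauss-Jacobi weight $t_1^{\beta_1}$ since it uses only exactness up to degree $2n-1$ together with positivity of the weights, and then by Proposition~\ref{proposition_best_approx} on $\widehat{\mathcal{E}}_\rho$. This yields a bound of the form $C\rho^{-2n+1}M$ for each contribution, with $C$ and $\rho$ depending only on $\widetilde\rho$. Summing the $d'$ terms and invoking $N=\mathcal{O}(n^{d'})$, hence $N^{1/d'}\sim n$, I would rewrite $\rho^{-2n+1}=\rho\,\exp(-2n\ln\rho)\le C\exp(-b\,N^{1/d'})$ with $b:=2\ln\rho$, which is the claimed rate.

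The main obstacle is the assertion that $C$ and $b$ are genuinely independent of $d'$. The telescoping above only delivers a bound with a prefactor linear in $d'$ multiplying $\rho^{-2n+1}$, and removing this dimensional factor to obtain a clean, dimension-robust exponential rate is the delicate point. It rests on the fact that the holomorphy radius $\widetilde\rho$, and hence $\rho$ and $b=2\ln\rho$, does not degrade with the coordinate direction or with $d'$, together with a multivariate best-approximation argument that avoids accumulating constants across dimensions. This dimension-robustness is exactly the content of \cite[Thm.~5.4]{chernov-schwab12}, to which the estimate ultimately reduces; I would therefore either invoke that result directly or reproduce its refined tensor-product approximation bound in place of the crude telescoping.
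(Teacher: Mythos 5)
Your proposal is essentially correct, but you should know that the paper does not prove Proposition~\ref{2d_proposition2} at all: the statement is imported directly from \cite[Thm.~5.4]{chernov-schwab12}, as announced in the sentence preceding it, and no proof is given. Your reconstruction is the natural one and is fully consistent with the paper's own $1d$ machinery: real analyticity of $\mathcal{R}$ on the compact cube gives, by compactness, a single radius $\widetilde\rho>1$ and a uniform bound $M$ for all univariate slices on $\widehat{\mathcal{E}}_{\rho}$; the telescoping over the $d'$ directions works exactly as in \eqref{2d_to_1d}, with the partial rules contributing only the weight masses ($1$ for Gauss-Legendre, $(1+\beta_1)^{-1}$ for the Gauss-Jacobi direction, finite since $\beta_1>-1$); and each univariate error is controlled by \eqref{1d_quad_approx} together with Proposition~\ref{proposition_best_approx}, i.e., by Lemma~\ref{lemma_quadabsch}, yielding a total bound of the form $C\,d'\,\rho^{-2n+1}$, which converts to $C\exp(-b\,N^{1/d'})$ via $N^{1/d'}\sim n$ and $b=2\ln\rho$. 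You also correctly identify the one genuine delicacy: this elementary route leaves a prefactor linear in $d'$ and does not by itself establish the claimed $d'$-independence of $C$ and $b$, which in \cite{chernov-schwab12} rests on Gevrey-type derivative bounds and a refined tensor-product approximation argument. For the way the proposition is used in this paper (with $d'=2d$ fixed by the spatial dimension, and $\mathcal{R}$ itself depending on $d'$), the dimensional prefactor is harmless and can be absorbed into $C$; so your argument would in fact suffice as a self-contained substitute for the citation in the paper's setting, while the sharp dimension-robust form is exactly the content of the cited theorem, to which you correctly defer.
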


Propositions~\ref{2d_proposition} and \ref{2d_proposition2} are formulated for fairly general integrands. 
However, in order to obtain exponential convergence results for $hp$-FEM discretizations, as in the case $d=1$, an explicit dependence of the convergence rate on the employed polynomial degree has to be derived, which is not directly deducible from Proposition~\ref{2d_proposition2}. 

In the following we extend our $1d$-quadrature analysis, which was explicit in $p$, to higher dimension $d>1$ specifically for the easier case of $\gamma$-shape regular meshes $\mathcal{T}_\gamma$ with a finite number of patch configurations.

We will
make the following assumption on the structure of the underlying triangulation of $\Omega$: 
	\begin{assumption}
		\label{assumption:finitely-many-patches}
		The triangulation ${\mathcal T}_\gamma$ is $\gamma$-shape regular and there exists, 
		up to dilations, rotations, and translations a finite number (independent on the number of elements in the mesh) of different 
		patches (i.e., unions of elements sharing a vertex). This is, for example, ensured for $d \in \{2,3\}$, if the mesh is generated from a coarse mesh
		by ``newest vertex bisection'', \cite{karkulik-pavlicek-praetorius13,stevenson08}.
	\end{assumption}

\begin{remark}
For exponential convergence results in terms 
of ``error vs.\ number of degrees of freedom'' as in  
Proposition~\ref{theorem_galerkin} or Theorem~\ref{main_result}, special geometric meshes $\mathcal{T}_{geo}$ are required that 
include anisotropic elements, \cite{faustmann2023hp}. A quadrature analysis on such meshes requires a more careful analysis of elements with large aspect ratio and is postponed to a forthcoming work. 
\eremk
\end{remark}
%-------------------------
\subsection{Consistency error analysis}
%-------------------------
	We start with a standard quadrature rule on a simplex $S$. To that end, we can also use the affine transformation \cite[Sec.~3 (Step 1)]{chernov-schwab12} to map a given simplex $S$ to the reference simplex $\widehat{S}_{d}:= \{ (x_{1},\dots, x_{d}) ~|~ x_i\ge 0 \;\forall i=1,\dots,d, \: x_{1}+\cdots + x_{d} \le 1 \}$ and  afterwards with the Duffy type transformation \cite[(2.12)]{chernov-schwab12} to $[0,1]^{d}$. This then allows to use tensor product Gauss-Legendre rules to obtain
	\begin{align}\label{Gauss_Simplex}
		\int_{S} f(\vec{x}) \, d\vec{x} = \int_{[0,1]^d} f \circ \Phi_{S}(\vec{\hspace{0.5mm}t}) \, J_{\Phi_{S}}(\vec{\hspace{0.5mm}t}) \, d\vec{\hspace{0.5mm}t}  \approx GL_{n,t_{1}}\circ \cdots \circ GL_{n,t_{d}}(f \circ \Phi_{S} \, J_{\Phi_{S}})  =: GL_{S}^{n}(f),
	\end{align}
	where $\Phi_{S}$ denotes the composed polynomial transformations \cite[Sec.~3 (Step 1) with (2.12)]{chernov-schwab12} depending only on the simplex $S$ with its polynomial Jacobian $J_{\Phi_{S}}$. Since $\Phi_{S}$ is an affine transformation composed with a Duffy type transformation, it holds for polynomials $u \in \mathcal{P}_{p}(S)$ that $u \circ \Phi_{S} \in \mathcal{Q}_{p}({\mathbb R}^d)$.

	The approximation of the right-hand side $l(v) := \skp{f,v}_{L^{2}(\Omega)}$ follows immediately.

		\begin{definition}[Approximate linear form for $d > 1$] \label{lin_multi_d_approx}
			For a piecewise polynomial $v \in S_{0}^{p,1}(\mathcal{T}_{\gamma})$, we define the approximate linear form by 
			\begin{align}\label{functional_aprox_definition_multi}
				l(v) := \skp{f,v}_{L^{2}(\Omega)} = \sum_{S \in \mathcal{T}_{\gamma}} \int_S f(\vec{x}) v(\vec{x}) \; d\vec{x} 
				 \approx \sum_{S \in \mathcal{T}_{\gamma}} GL_{S}^{n} \big( f \, v \big) =:\widetilde{l}_{n}(v),
			\end{align}
			where $GL^n_{S}$ denotes the tensor product Gauss-Legendre rule \eqref{Gauss_Simplex}.
		\end{definition}

Consistency error estimates for the linear form $l$  follows with the same arguments as for the one dimensional case in Lemma~\ref{consistency_error_lemma_functional}\revision{.}

	\begin{lemma}[Consistency error for $l$]
		\label{consistency_error_lemma_functional_multi}
		Let $f$ be analytic in $\overline{\Omega}$, and let $\mathcal{T}_{\gamma}$ be a $\gamma$-shape regular mesh on $\Omega \subseteq \mathbb{R}^{d}$. Let $ l(v) := \skp{f,v}_{L^{2}(\Omega)}$ and let its approximation
		$ \widetilde{l}_{n}(\cdot)$ be defined by \eqref{functional_aprox_definition_multi}.
		Then, there exist constants $\rho > 1$ and $C_{f,\gamma,s} > 0$ depending only on $f$, $\gamma$, $s$, and $\Omega$ such that
		\begin{align}
			|l(v)-\widetilde{l}_{n}(v)| \le C_{f,\gamma,s} \,  p \;   \rho^{p-2n+1} \|v\|_{\widetilde{H}^{s}(\Omega)} &~~~~~~\text{for all } ~ v \in S_{0}^{p,1}(\mathcal{T}_{\gamma}).
		\end{align}
	\end{lemma}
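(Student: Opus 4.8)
The plan is to mirror the one-dimensional argument of Lemma~\ref{consistency_error_lemma_functional}, with the affine pullback to $\widehat{T}$ replaced by the composed transformation $\Phi_S$ of \eqref{Gauss_Simplex} and the univariate Gauss-Legendre rule replaced by the tensor-product rule $GL^n_S$. On each simplex $S \in \mathcal{T}_\gamma$ I would write, with $\Psi_S := (f\circ\Phi_S)\,(v\circ\Phi_S)\,J_{\Phi_S}$,
\[
\int_S f\,v\,d\vec{x} - GL^n_S(f\,v) = \int_{[0,1]^d}\Psi_S(\vec{t})\,d\vec{t} - GL_{n,t_1}\circ\cdots\circ GL_{n,t_d}(\Psi_S),
\]
and bound this tensor-product quadrature error by its univariate counterparts. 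Iterating the splitting \eqref{2d_to_1d} over the $d$ coordinate directions gives a bound of the form $C\rho^{-2n+1}\sum_{i=1}^d \sup\|\Psi_S\|_{L^\infty}$, where in the $i$-th term only $t_i$ is extended to the scaled Bernstein ellipse $\widehat{\mathcal E}_\rho$ while the remaining variables stay real in $[0,1]$; this is the $d$-dimensional analogue of \eqref{quad_err_2D} in Lemma~\ref{lemma_quadabsch}.

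First I would record the analyticity of the integrand. Since $\Phi_S$ is polynomial (an affine map composed with a Duffy-type transformation) and $(v\circ\Phi_S)\,J_{\Phi_S}$ is a polynomial, the only non-polynomial factor of $\Psi_S$ is $f\circ\Phi_S$. As $f$ is analytic on $\overline\Omega$, it extends holomorphically to a complex neighborhood of $\overline\Omega$, and since the affine part of $\Phi_S$ scales like $h_S \le \operatorname{diam}\Omega$, the pullback $f\circ\Phi_S$ is holomorphic on a polyellipse $\widehat{\mathcal E}_\rho^{\,d}$ for some $\rho>1$ that may be chosen uniformly in $S$ by $\gamma$-shape regularity; on this set $\|f\circ\Phi_S\|_{L^\infty}$ is bounded by a constant depending only on $f$ and $\Omega$.

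It then remains to bound the polynomial factor on each partially complexified ellipse. Fixing the real variables and viewing $v\circ\Phi_S$ as a univariate polynomial of degree $\le p$ in the single complexified variable, the Bernstein inequality contributes exactly one factor $\rho^p$ (not $\rho^{dp}$, precisely because only one variable is complexified). Combining this with a multivariate inverse inequality — the $d$-dimensional analogue of \eqref{norm_ung5} in Lemma~\ref{lemma_norm_approx} — and the scaling induced by $\Phi_S$ (so that the weighted $L^2$-norm on $[0,1]^d$ reproduces $\|v\|_{L^2(S)}$ and $J_{\Phi_S}$ supplies the correct power of $h_S$), I would obtain on each simplex a bound of the form $C_{f,\gamma,s}\,p\,\rho^{p-2n+1}\,h_S^{d/2}\|v\|_{H^s(S)}$, where the displayed power of $p$ stands for a fixed power of $p$ that is immaterial for the exponential rate. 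Summation over all $S\in\mathcal{T}_\gamma$, the Cauchy-Schwarz inequality, $\sum_S h_S^{d}\le C|\Omega|$, and the elementary bound $\sum_S\|v\|^2_{H^s(S)}\le\|v\|^2_{H^s(\Omega)}\le\|v\|^2_{\widetilde H^s(\Omega)}$ (the element-wise seminorms being dominated by the global one) then yield the asserted estimate.

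The main obstacle is the last quantitative step: establishing the correct multivariate version of \eqref{norm_ung5}, i.e.\ controlling the sup-norm of a $\mathcal{Q}_p$-polynomial on the partially complexified polyellipse by the fractional seminorm $|v|_{H^s(S)}$ with the right power of $h_S$. Care is required because the Duffy-type part of $\Phi_S$ is non-affine, so its Jacobian is a non-constant polynomial and the $h_S$-scaling must be propagated through this transformation rather than read off from a constant Jacobian; one must also check that the single $\rho$ arising from the analyticity of $f$ and the $\rho^p$ from the Bernstein step can be taken equal, so that the exponent collapses to $\rho^{p-2n+1}$ as stated.
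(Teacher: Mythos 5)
Your proposal is correct and takes essentially the same approach as the paper, which in fact gives no separate proof of this lemma and only remarks that it ``follows with the same arguments as for the one dimensional case'' (Lemma~\ref{consistency_error_lemma_functional}): your tensor-product splitting of the quadrature error into univariate errors, the uniform Bernstein ellipse for $f\circ\Phi_S$ via $\gamma$-shape regularity, Bernstein in the single complexified variable (the degree per variable stays $p$ up to the fixed Jacobian degree since $v\circ\Phi_S\in\mathcal{Q}_p(\mathbb{R}^d)$), inverse estimates with scaling, and summation with Cauchy--Schwarz are exactly that argument carried out. The obstacle you flag resolves as in the paper's own bilinear-form analysis: passing from $L^\infty([0,1]^d)$ back to $L^\infty(\widehat{S}_d)$ via $\|\Psi\circ\Phi_S\|_{L^\infty([0,1]^d)}\le\|\Psi\|_{L^\infty(\widehat{S}_d)}$ confines all $h_S$-dependence to the affine factor (the Duffy part is $S$-independent), and the resulting fixed power of $p$ from the multivariate $L^\infty$--$L^2$ inverse estimate is, as you note, immaterial for the exponential rate.
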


Next, we define the approximation to the bilinear form $a(\cdot,\cdot)$.

\begin{definition}[Approximate bilinear form for $d>1$] \label{bilin_multi_d_approx}
Let $\mathcal{T}_{\gamma}$ be a $\gamma$-shape regular mesh and $S_{1}, S_{2} \in \mathcal{T}_{\gamma}$ be closed simplices in $\mathbb{R}^{d}$ with $k := \text{dim}(S_{1} \cap S_{2})$ (setting $k:=-1$ if $S_{1}\cap S_{2} = \emptyset$). For piecewise polynomials $v \in S_{0}^{p,1}(\mathcal{T}_{\gamma})$, $w \in S_{0}^{r,1}(\mathcal{T}_{\gamma})$, using the notations $F_j$, $j=-1,\dots,K_k$ from Corollary~\ref{corollar_2d}, we define the following tensor product quadrature rules
	\begin{align}
		Q^{n}_{S_{1},S_{2}}(v,w)&:= GJ_{n,t_{1}}^{0,\beta_{1}} \circ GL_{n,t_{2}}\circ \cdots \circ GL_{n,t_{d}} \big( \textstyle \sum_{j=0}^{K_{k}} F_{j} \big) &&~~\text{for}~~ k\ge0, \\
		Q^{n}_{S_{1},S_{2}}(v,w)&:= GL_{n,t_{1}}\circ \cdots \circ GL_{n,t_{d}} \big(  F_{-1} \big)&&~~\text{for}~~ k=-1,
	\end{align}
	where $\beta_{1}:= 1-2s+d-k$. 

The final approximation to the bilinear form $a(\cdot,\cdot)$ reads 
	\begin{align}\label{bilinear_aprox_definition_mehrd}
		\widetilde{a}_{n}(v,w):= \frac{C(s,d)}{2}  \sum\limits_{S_{1} \in \mathcal{T}_{\gamma}} \sum\limits_{S_{2} \in \mathcal{T}_{\gamma}} Q_{S_{1},S_{2}}^n(v,w) + \sum\limits_{S_{1} \in \mathcal{T}_{\gamma}} Q^n_{S_{1},\Omega^{c}}(v,w).
	\end{align}
Here $Q^n_{S_{1},\Omega^{c}}(v,w)$ denotes an approximation to $I_{S_{1},\Omega^c}(v,w)$ given by (\ref{eq:Qnomegac}). 
\end{definition}

We now employ scaling arguments to work out the dependence on the element sizes and the polynomial degree when estimating $a(\cdot,\cdot) - \widetilde{a}_n(\cdot,\cdot)$. 

\subsubsection*{Adjacent or identical simplices}

We start with the case  of two simplices $S_1,S_2$ with $ k := \text{dim}(S_{1} \cap S_{2}) \ge 0 $.
We define the reference simplex as $\widehat{S}_{d}:= \{ (x_{1},\dots, x_{d}) ~|~ x_i\ge 0 \;\forall i=1,\dots,d, \: x_{1}+\cdots + x_{d} \le 1 \}$. 
%Step 1 of the transformations in \cite[Sec.~{3}]{chernov-schwab12} is the pullback from $S_{1} \times S_{2}$ to $\widehat{S}_{d}\times \widehat{S}_{d}$.
As the simplices $S_i$, $i=1,2$ share, by assumption, $k+1$ vertices, we may label the vertices $\vec{\hspace{0.3mm} v }^{(i,\ell)}$ of $S_i$ such that
 $ \vec{\hspace{0.3mm} v }^{(1,\ell)} =  \vec{\hspace{0.3mm} v }^{(2,\ell)}$ for all $0 \le \ell \le k$ and $ \vec{\hspace{0.3mm} v }^{(1,\ell)} \neq  \vec{\hspace{0.3mm} v }^{(2,\ell)}$ for all $k+1 \le \ell \le d$. With the $d\times d\:$-$\:$matrices
\begin{align}
	A^{(i)}:= \big(\vec{\hspace{0.3mm} v }^{(i,1)} - \vec{\hspace{0.3mm} v }^{(i,0)} ~ \cdots ~ \vec{\hspace{0.3mm} v }^{(i,d)} - \vec{\hspace{0.3mm} v }^{(i,0)} \big), ~~~ i=1,2,
\end{align}
the pullback transformation $F_{S_{1}\times S_{2}}$ is given by
\begin{align}
	F_{S_{1}\times S_{2}}: \widehat{S}_{d}\times\widehat{S}_{d} \rightarrow  S_{1}\times S_{2}, ~~~ (\vec{x},\vec{y}) \mapsto \big( F_{S_{1}} (\vec{x}) , F_{S_{2}} (\vec{y})  \big) := \Big(\vec{\hspace{0.3mm} v }^{(1,0)} + A^{(1)} \vec{x},\: \vec{\hspace{0.3mm} v }^{(2,0)} + A^{(2)} \vec{y} \Big)
\end{align}
with its Jacobian $ J_{F_{S_{1} \times S_{2}}} = |\det A^{(1)} \: \det A^{(2)}| $. Denoting by $\hat{v}_{S_{i}} := v|_{S_{i}} \circ F_{S_{i}}$ and $ \hat{w}_{S_{i}} := w|_{S_{i}} \circ F_{S_{i}} $ for $i=1,2$ the pullbacks to the reference simplex $\widehat{S}_{d}$, $F_{S_{1}\times S_{2}}$ transforms the integral \eqref{corollar_2d_eq1}
to
\begin{align}\label{int_2d_ref}
	I_{S_{1},S_{2}}(v,w) = \int\limits_{\widehat{S}_{d}} \int\limits_{\widehat{S}_{d}} \dfrac{(\hat{v}_{S_{1}} (\vec{x}) - \hat{v}_{S_{2}} (\vec{y})) \: (\hat{w}_{S_{1}} (\vec{x})- \hat{w}_{S_{2}} (\vec{y}))}{| F_{S_{1}}(\vec{x}) - F_{S_{2}} (\vec{y})|^{d+2s}} J_{F_{S_{1} \times S_{2}}} \,d\vec{y}\, d\vec{x}.
\end{align}
As, for all elements in a $\gamma$-shape regular mesh $\mathcal{T}_{\gamma}$, the lengths of all edges $|\vec{\hspace{0.3mm} v }^{(j,i)} - \vec{\hspace{0.3mm} v }^{(j,0)}|$ are controlled by the element diameter $h_{S_j}$, we obtain $ J_{F_{S_{1} \times S_{2}}} \le C_{\gamma,d} \: h_{S_{1}}^{d} \: h_{S_{2}}^{d} $ with a constant $C_{\gamma,d}$ that depends only on $\gamma$ and the dimension $d$.

%For the simplices $S_{1},S_{2} \subset \mathbb{R}^{d}$ the length of a simplex edge side is $h_{S_{j},i}:= |\vec{\hspace{0.3mm} v }^{(j,i)} - \vec{\hspace{0.3mm} v }^{(j,0)}|$. If $S_{1}$, $S_{2}$ are elements of a $\gamma$-shape regular mesh $\mathcal{T}_{\gamma} $, then all lengths $h_{S_{j},i}$ are comparable (with implied constants  depending only on $\gamma$) to the diameter $h_{S_j}$ of $S_{j}$, i.e., $ h_{S_{j},i} \sim h_{S_{j}} $. Therefore, for a $\gamma$-shape regular mesh we have $ J_{F_{S_{1} \times S_{2}}} \le C_{\gamma,d} \: h_{S_{1}}^{d} \: h_{S_{2}}^{d} $ with a constant $C_{\gamma,d}$ that depends only on $\gamma$ and the dimension $d$.

To simplify the notation we introduce $\mathcal{N}_{\hat{v}}(\vec{x}, \vec{y}):= \hat{v}_{S_{1}} (\vec{x}) - \hat{v}_{S_{2}} (\vec{y}) $ and  $\mathcal{N}_{\hat{w}}(\vec{x}, \vec{y}):= \hat{w}_{S_{1}} (\vec{x}) - \hat{w}_{S_{2}} (\vec{y}) $.
Corollary \ref{corollar_2d} yields for \eqref{int_2d_ref} 
\begin{align}\label{2d_integrand_absch_start}
	\hspace{-6mm} I_{S_{1},S_{2}}(v,w) = J_{F_{S_{1}  \times S_{2}}} \hspace{-3mm} \int\limits_{~[0,1]^{2d}}   \underbrace{\sum\limits_{j=0}^{K_{k}} \dfrac{\mathcal{N}_{\hat{v}} \circ \Phi_{j}(\vec{\hspace{0.5mm}t})}{t_{1}} \: \dfrac{\mathcal{N}_{\hat{w}} \circ \Phi_{j}(\vec{\hspace{0.5mm}t})}{t_{1}} \: \dfrac{t_{1}^{d+2s}}{|(A^{(1)}\vec{x}-A^{(2)}\vec{y}) \circ \Phi_{j}(\vec{\hspace{0.5mm}t})|^{d+2s}} \: J_{\Phi_{j}}(\vec{\hspace{0.5mm}t})}_{=: {\mathcal I(\vec{\hspace{0.5mm}t})}}\:  t_{1}^{1-2s+d-k} \,d\vec{\hspace{0.5mm}t}.
\end{align} 
The estimate of the consistency error is again based on Lemma~\ref{lemma_quadabsch}, which directly generalizes to higher dimensions. Corollary~\ref{corollar_2d} shows that $\mathcal{I}$ allows for a holomorphic extension to a Bernstein ellipse $\widehat{\mathcal{E}_{\rho}}$ in each variable with fixed $\rho_{A^{(1)},A^{(2)}} > 1$, ostensibly dependent on the transformation matrices $A^{(1)},A^{(2)}$ but  independent of $v \in S_{0}^{p,1}(\mathcal{T}_{\gamma})$, $w \in S_{0}^{r,1}(\mathcal{T}_{\gamma})$. By Assumption~\ref{assumption:finitely-many-patches}, there is only a finite number of patch configurations in $\mathcal{T}_\gamma$, which leads, up to scaling, to a finite number of different matrices $ A^{(1)},A^{(2)}$. To remove the scaling dependence, we note that
\begin{align}\label{scaling_argument}
	A^{(1)}\vec{x} - A^{(2)}\vec{y} = h_{S_{1}} \Big( h_{S_{1}}^{-1} \: A^{(1)}\vec{x} - h_{S_{1}}^{-1} \: A^{(2)}\vec{y} \Big).
\end{align}
For $\gamma$-shape regular meshes we have $h_{S_{1}} \sim h_{S_{2}}$ and the diameter of each simplex is proportional to all edge lengths, which leads for $ \widehat{A}^{i} := h_{S_{1}}^{-1} \: A^{(i)} $ to $ \| \widehat{A}^{i} \|_{1} = \mathcal{O}(1) $ for $i=1,2$ and subsequently to a finite number of different  values $\rho_{\widehat{A}^{(1)},\widehat{A}^{(2)}} > 1$. Thus, we have a holomorphic extension of \revision{${\mathcal I}$} to a Bernstein ellipse $\widehat{\mathcal{E}}_{\rho} $ with a fixed $\rho := \min_{\widehat{A}^{(1)},\widehat{A}^{(2)}}(\rho_{\widehat{A}^{(1)},\widehat{A}^{(2)}}) > 1$. To finish the estimate of the consistency error, it suffices to bound each of the three quotients in \eqref{2d_integrand_absch_start} in the norms $\| \cdot \|_{L^{\infty} (\widehat{\mathcal{I}}^{2d \setminus \ell} \times \widehat{\mathcal{E}}_{\rho}^{\ell}) }$, where $\widehat{\mathcal{I}} := (0,1)$ and $ \widehat{\mathcal{I}}^{2d \setminus \ell} \times \widehat{\mathcal{E}}_{\rho}^{\ell}:= \widehat{\mathcal{I}} \times \cdots \times \widehat{\mathcal{I}} \times \widehat{\mathcal{E}}_{\rho}^{\ell} \times \widehat{\mathcal{I}} \times \cdots \times \widehat{\mathcal{I}} $ denotes the set where the $\ell$-th component of $\widehat{\mathcal{I}}^{2d}$ is extended to the Bernstein ellipse $\widehat{\mathcal{E}}_{\rho}$.

Using  $ \hat{v}_{S_{1}}(\vec{0}) = \hat{v}_{S_{2}}(\vec{0}) $ for $k \ge 0$, the first term can be bounded as in Lemma~\ref{lemma_norm_approx} using the 
Bernstein and Markov  inequalities by 
\begin{align}
	\big\| &\mathcal{N}_{\hat{v}}  \circ \Phi_j \: t_{1}^{-1} \big\|_{L^{\infty} (\widehat{\mathcal{I}}^{2d \setminus \ell} \times \widehat{\mathcal{E}}_{\rho}^{\ell}) } = 
	\bigg\| \int_{0}^{t_{1}} \partial_{\tau} \big( \mathcal{N}_{\hat{v}} \circ \Phi_j(\tau,t_2,\dots,t_d) \big) \: d\tau \: t_{1}^{-1} \bigg\|_{L^{\infty} (\widehat{\mathcal{I}}^{2d \setminus \ell} \times \widehat{\mathcal{E}}_{\rho}^{\ell}) } \notag \\
	&\le \big\| \partial_{t_{1}} \big( \mathcal{N}_{\hat{v}} \circ \Phi_j(\vec{\hspace{0.5mm} t }) \big) \big\|_{L^{\infty} (\widehat{\mathcal{I}}^{2d \setminus \ell} \times \widehat{\mathcal{E}}_{\rho}^{\ell}) } 
	\le \rho^{q_\Phi p} \big\| \partial_{t_{1}} \big( \mathcal{N}_{\hat{v}} \circ \Phi_j(\vec{\hspace{0.5mm} t }) \big) \big\|_{L^{\infty} (\widehat{\mathcal{I}}^{2d}) } \notag \\
	&\lesssim  (q_\Phi p)^{2} \rho^{q_\Phi p} \big\|  \mathcal{N}_{\hat{v}} \circ \Phi_j(\vec{\hspace{0.5mm} t })  \big\|_{L^{\infty} (\widehat{\mathcal{I}}^{2d}) } 
	=  (q_\Phi p)^{2} \rho^{q_\Phi p} \big\|  \mathcal{N}_{\hat{v}}   \big\|_{L^{\infty} (  \Phi_{j}(\widehat{\mathcal{I}}^{2d})) } 
	\le (q_\Phi p)^{2} \rho^{q_\Phi p} \big\|  \mathcal{N}_{\hat{v}}   \big\|_{L^{\infty} ( \widehat{S}_{d} \times \widehat{S}_{d}) }  \notag \\
	&= (q_\Phi p)^{2} \rho^{q_\Phi p} \big\|  \hat{v}_{S_{1}} - \hat{v}_{S_{1}}(\vec{0}) + \hat{v}_{S_{2}}(\vec{0}) - \hat{v}_{S_{2}}   \big\|_{L^{\infty} ( \widehat{S}_{d} \times \widehat{S}_{d}) } \notag \\
	&\le  (q_\Phi p)^{2} \rho^{ q_\Phi p} \big( \big\|  \hat{v}_{S_{1}} - \hat{v}_{S_{1}}(\vec{0})  \big\|_{L^{\infty} ( \widehat{S}_{d}) } + \big\|  \hat{v}_{S_{2}} - \hat{v}_{S_{2}}(\vec{0})   \big\|_{L^{\infty} ( \widehat{S}_{d}) }  \big),
\end{align}
where, again, $q_\Phi$ is the maximal degree of the polynomial transformations $\Phi_j$. On the reference simplex, there holds by Markov's inequality and inductive application of the inverse inequality from \cite[Thm.~3.92]{schwab1998p} that
\begin{align}
	\big\|  \hat{v}_{S_{1}} - \hat{v}_{S_{1}}(\vec{0})  \big\|_{L^{\infty} ( \widehat{S}_{d}) } 
	\lesssim \big\|  \nabla \hat{v}_{S_{1}}  \big\|_{L^{\infty} ( \widehat{S}_{d}) } 
	= \big\|  \nabla (\hat{v}_{S_{1}} - \overline{\hat{v}_{S_{1}}} )  \big\|_{L^{\infty} ( \widehat{S}_{d}) } 
	\lesssim (q_\Phi p)^{2} \big\|  \hat{v}_{S_{1}} - \overline{\hat{v}_{S_{1}}}   \big\|_{ L^{\infty} ( \widehat{S}_{d}) } \notag \\
	\lesssim (q_\Phi p)^{2+d} \big\|  \hat{v}_{S_{1}} - \overline{\hat{v}_{S_{1}}}   \big\|_{ L^{2} ( \widehat{S}_{d}) }
	\lesssim (q_\Phi p)^{2+d} \big|  \hat{v}_{S_{1}}   \big|_{ H^{s} ( \widehat{S}_{d}) } 
	\le C_{\gamma,d,s} (q_\Phi p)^{2+d} h_{S_{1}}^{s-d/2} \big| \hspace{0.3mm}  v|_{S_{1}}   \big|_{ H^{s} ( S_{1}) } ,
\end{align}
	where $C_{\gamma,d,s}$ is a constant that depends only on $\gamma,d,s$. This finishes the upper bound for the first quotient in (\ref{2d_integrand_absch_start})
	\begin{align}
		\big\| \mathcal{N}_{\hat{v}}  \circ \Phi_j \: t_{1}^{-1} \big\|_{L^{\infty} (\widehat{\mathcal{I}}^{2d \setminus \ell} \times \widehat{\mathcal{E}}_{\rho}^{\ell}) } 
		&\le C_{\gamma,d,s} \: \rho^{q_\Phi p} \: (q_\Phi p)^{4+d} \max \Big( h_{S_{1}}^{s-d/2} \: \big| \hspace{0.3mm}  v|_{S_{1}}   \big|_{ H^{s} ( S_{1}) }, \: h_{S_{2}}^{s-d/2} \: \big| \hspace{0.3mm}  v|_{S_{2}}   \big|_{ H^{s} ( S_{2}) } \Big) \notag \\
		&\le C_{\gamma,d,s} \: \rho^{q_\Phi p} \: (q_\Phi p)^{4+d}  h_{S_{1}}^{s-d/2} \: \big|   v   \big|_{ H^{s} ( co(S_{1},S_{2})) }.
	\end{align}

	The second factor in the integrand in (\ref{2d_integrand_absch_start}) can be treated in the same way. The estimate for the third factor in the integrand follows again, as discussed above, by Assumption~\ref{assumption:finitely-many-patches} and   \eqref{scaling_argument}
	\begin{align*}
	&\bigg\| \dfrac{t_{1}^{d+2s}}{|(A^{(1)}\vec{x}-A^{(2)}\vec{y}) \circ \Phi_{j}(\vec{\hspace{0.5mm}t})|^{d+2s}} \bigg\|_{L^{\infty} (\widehat{\mathcal{I}}^{2d \setminus \ell} \times \widehat{\mathcal{E}}_{\rho}^{\ell}) } \\ 
&\qquad\qquad=
h_{S_{1}}^{-d-2s} \Bigg\| \dfrac{t_{1}^{d+2s}}{\big|\big( h_{S_{1}}^{-1}  A^{(1)}\vec{x} - h_{S_{1}}^{-1}  A^{(2)}\vec{y} \big) \circ \Phi_{j}(\vec{\hspace{0.5mm}t}) \big|^{d+2s}} \Bigg\|_{L^{\infty} (\widehat{\mathcal{I}}^{2d \setminus \ell} \times \widehat{\mathcal{E}}_{\rho}^{\ell}) } 
\leq  C_{\gamma,s,\revision{\rho},d} \: h_{S_{1}}^{-d-2s},
	\end{align*}
where the last estimate follows from the observation that we only have  a finite number of cases for the function inside the norm. 
Now, we have deduced the appropriate scaling in terms of the element sizes for each factor in (\ref{2d_integrand_absch_start}) in the $L^\infty$-norm  and
inserting everything into the higher-dimensional analog of Lemma~\ref{lemma_quadabsch} yields 
\begin{align}
	\hspace{-5mm}|I_{S_{1},S_{2}}(v,w) - Q^{n}_{S_{1},S_{2}}(v,w)| \le C_{s,\gamma,\rho,d} \: (q_{\Phi}p)^{d+4} (q_{\Phi}r)^{d+4}  \rho^{q_\Phi(p+r)-2n+1} \big|   v   \big|_{ H^{s} ( co(S_{1},S_{2})) } \big|   w   \big|_{ H^{s} ( co(S_{1},S_{2})) }
\end{align}
for adjacent or identical simplices $S_{1},S_{2}$.

\subsubsection*{Separated simplices}

	For the case $ S_{1} \cap S_{2} = \emptyset$, i.e. $k = -1$, we start with the same transformation as in \eqref{int_2d_ref}, where we labelled the vertices such that there holds $ \vec{\hspace{0.3mm} v }^{(1,0)} - \vec{\hspace{0.3mm} v }^{(2,0)} = \min_{i,j} \vec{\hspace{0.3mm} v }^{(1,i)} - \vec{\hspace{0.3mm} v }^{(2,j)} $. Corollary~\ref{corollar_2d} yields for \eqref{int_2d_ref} 
	\begin{align}\label{2d_integrand_absch_start_getrennt}
		I_{S_{1},S_{2}}(v,w) = J_{F_{S_{1}  \times S_{2}}} \hspace{-3mm} \int\limits_{~[0,1]^{2d}} \mathcal{N}_{\hat{v}} \circ \Phi_{-1}(\vec{\hspace{0.5mm}t}) \: \mathcal{N}_{\hat{w}} \circ \Phi_{-1}(\vec{\hspace{0.5mm}t}) \: |(F_{S_{1}}(\vec{x}) - F_{S_{2}} (\vec{y})) \circ \Phi_{-1}(\vec{\hspace{0.5mm}t})|^{-d-2s} \: J_{\Phi_{-1}}(\vec{\hspace{0.5mm}t)} \,d\vec{\hspace{0.5mm}t}. 
	\end{align}
For simplices $S_1$, $S_2$ define $d_{S_{1},S_{2}} := \operatorname{dist}(S_{1},S_{2})$ and pick a closed ball $B_{S_1,S_2}$ with $S_1$, $S_2 \subseteq B_{S_1,S_2}$ 
and $\operatorname{diam} B_{S_1,S_2} \leq h_{S_1} + h_{S_2} + d_{S_{1},S_{2}}$.

The integrand can be estimated with a combination of arguments applied to the case $k \ge 0$ and the case $d=1$ in Lemma~\ref{lemma_quadrature_error}. Inserting the mean 
$\overline{v_{B_{S_{1},S_{2}}}} := \int_{B_{S_{1},S_{2}}} v(x) dx / |B_{S_{1},S_{2}}|$ gives
	\begin{align}
		\big\| &\mathcal{N}_{\hat{v}}  \circ \Phi_{-1}  \big\|_{L^{\infty} (\widehat{\mathcal{I}}^{2d \setminus \ell} \times \widehat{\mathcal{E}}_{\rho}^{\ell}) } \le 
		\rho^{q_\Phi p }  \bigg\|\mathcal{N}_{\hat{v}} \circ \Phi_{-1} \bigg\|_{L^{\infty} (\widehat{\mathcal{I}}^{2d}) }
		\le C\rho^{q_\Phi p }  \bigg\| \hat{v}_{S_{1}}  - \hat{v}_{S_{2}}  \bigg\|_{L^{\infty} (\widehat{S}_{d} \times \widehat{S}_{d}) } \notag \\
		&\le C  \rho^{q_\Phi p } \big( \big\|  \hat{v}_{S_{1}}  - \overline{v_{B_{S_{1},S_{2}}}}  \big\|_{L^{\infty} ( \widehat{S}_{d}) } + \big\|   \hat{v}_{S_{2}} - \overline{v_{B_{S_{1},S_{2}}}}  \big\|_{L^{\infty} ( \widehat{S}_{d}) }  \big).
	\end{align}
With an $L^\infty$--$L^2$ inverse estimate on the reference simplex and a Poincar\'e type estimate  for the ball $B_{S_1,S_2}$ 
there holds 
	\begin{align}
		\big\|  \hat{v}_{S_{1}}  - \overline{v_{B_{S_{1},S_{2}}}}  \big\|_{ L^{\infty} ( \widehat{S}_{d}) }
		&\le  C_d
		(q_\Phi p)^{d} \big\|  \hat{v}_{S_{1}}  - \overline{v_{B_{S_{1},S_{2}}}}   \big\|_{ L^{2} ( \widehat{S}_{d}) }
		\le C_{\gamma,d,s} \, h_{S_{1}}^{-d/2}
		(q_\Phi p)^{d}  \big\|  v  - \overline{v_{B_{S_{1},S_{2}}}}   \big\|_{ L^{2} ( S_{1}) } \notag \\
		&\le C_{\gamma,d,s} \,  h_{S_{1}}^{d/2} (h_{S_1} + h_{S_2} + d_{S_{1},S_{2}})^s
		(q_\Phi p)^{d}   \big|  v  \big|_{ H^{s} (B_{S_{1},S_{2}}) },
	\end{align}
	where $C_{\gamma,d,s}$ is a constant that depends only on $\gamma,d,s$. For the third factor in the integrand in (\ref{2d_integrand_absch_start_getrennt}), we note 
	\begin{align}
		 |(F_{S_{1}}(\vec{x}) - F_{S_{2}} (\vec{y})) \circ \Phi_{-1}(\vec{\hspace{0.5mm}t})|^{-d-2s}  
		 =
		|(\vec{\hspace{0.3mm} v }^{(1,0)} - \vec{\hspace{0.3mm} v }^{(2,0)} + A^{(1)}\vec{x}-A^{(2)}\vec{y}) \circ \Phi_{-1}(\vec{\hspace{0.5mm}t})|^{-d-2s}.
	\end{align}
	It follows that
	\begin{align}
		\big\| &|(F_{S_{1}}(\vec{x}) - F_{S_{2}} (\vec{y})) \circ \Phi_{-1}(\vec{\hspace{0.5mm}t})|^{-d-2s} \big\|_{L^{\infty} (\widehat{\mathcal{I}}^{2d \setminus \ell} \times \widehat{\mathcal{E}}_{\rho}^{\ell}) }  \notag \\
		&\le d_{S_{1},S_{2}}^{-d-2s}
		\big\| |(d_{S_{1},S_{2}}^{-1} (\vec{\hspace{0.3mm} v }^{(1,0)} - \vec{\hspace{0.3mm} v }^{(2,0)})  +d_{S_{1},S_{2}}^{-1} A^{(1)}\vec{x}- d_{S_{1},S_{2}}^{-1} A^{(2)}\vec{y}) \circ \Phi_{-1}(\vec{\hspace{0.5mm}t})|^{-d-2s} \big\|_{L^{\infty} (\widehat{\mathcal{I}}^{2d \setminus \ell} \times \widehat{\mathcal{E}}_{\rho}^{\ell}) }.
	\end{align}
	By Assumption~\ref{assumption:finitely-many-patches}, there is only a finite number of patch configurations in $\mathcal{T}_\gamma$, which leads, up to scaling, to a finite number of different matrices $A^{(1)},A^{(2)}$.
The $\gamma$-shape regularity and choice of numbering of the vertices yield  $ | (\vec{\hspace{0.3mm} v }^{(1,0)} - \vec{\hspace{0.3mm} v }^{(2,0)})| \sim d_{S_{1},S_{2}} $ and $\| d_{S_{1},S_{2}}^{-1} A^{(1)} \|, \, \| d_{S_{1},S_{2}}^{-1} A^{(2)} \|   =  \mathcal{O}(1)$. This leads to a finite number of holomorphic extensions. Hence,  
  there is a $\rho > 1$ for which a holomorphic extension in each variable to the Bernstein ellipse $\widehat{\mathcal{E}}_{\rho}$ is possible, and this extension 
can be bounded by 
	\begin{align}
		\big\| &|(F_{S_{1}}(\vec{x}) - F_{S_{2}} (\vec{y})) \circ \Phi_{-1}(\vec{\hspace{0.5mm}t})|^{-d-2s} \big\|_{L^{\infty} (\widehat{\mathcal{I}}^{2d \setminus \ell} \times \widehat{\mathcal{E}}_{\rho}^{\ell}) } \le C_{\gamma,s,\rho,d} \, d_{S_{1},S_{2}}^{-d-2s}. 
	\end{align}
Inserting everything into the higher-dimensional analog of Lemma~\ref{lemma_quadabsch} yields for separated simplices $S_{1},S_{2}$
\begin{align}
	|I_{S_{1},S_{2}}(v,w) - Q^{n}_{S_{1},S_{2}}(v,w)| \le C_{s,\gamma,d} \: (q_{\Phi}p)^{d} (q_{\Phi}r)^{d}  \rho^{q_\Phi(p+r)-2n+1} \big|   v   \big|_{ H^{s} ( B_{S_{1},S_{2}}) } \big|   w   \big|_{ H^{s} ( B_{S_{1},S_{2}}) },
\end{align}
where we used that for $\gamma$-shape regular meshes there holds $d_{S_{1},S_{2}} \geq C \max\{h_{S_1},h_{S_2}\}$ for some $C > 0$ depending on $\gamma$ so that 
the combined effect of the scaling parameters of all contributions in (\ref{2d_integrand_absch_start_getrennt}) can be uniformly bounded by
	\begin{align*}
		d_{S_{1},S_{2}}^{-d-2s} \: h_{S_{1}}^{d} \: h_{S_{2}}^{d} (h_{S_1} + h_{S_2} + d_{S_{1},S_{2}})^{2s} \big(h_{S_{1}}^{-d/2} + h_{S_{2}}^{-d/2}\big)^{2} \leq C_{\gamma,s}.
	\end{align*}

	Combining the estimates for all cases with the simple observation $co(S_{1},S_{2}) \subseteq B_{S_{1},S_{2}}$ 
yields the following lemma for the quadrature error.

	\begin{lemma}\label{consistency_error_lemma_multi}
	Let $\mathcal{T}_{\gamma}$ be a $\gamma$-shape regular mesh satisfying Assumption~\ref{assumption:finitely-many-patches}. Let $S_{1}, S_{2} \in \mathcal{T}_{\gamma}$ be closed simplices in $\mathbb{R}^{d}$ and denote by
	 $B_{S_{1},S_{2}}$ a closed ball with $\operatorname{diam} B_{S_1,S_2} \leq h_{S_1} + h_{S_2} + \operatorname{dist}(S_1,S_2)$ that contains the simplices $S_1,\, S_2 \subseteq B_{S_1,S_2}$. Then, for the integral $ I_{S_{1},S_{2}}(v,w) $ from \eqref{corollar_2d_eq1}
		and its approximation $ Q^{n}_{S_{1},S_{2}}(v,w) $ by quadrature, 
there exists a constant $\rho>1$ that depends only on $\gamma$ and $\Omega$ such that for all $v \in S_{0}^{p,1}(\mathcal{T}_{\gamma})$, $w \in S_{0}^{r,1}(\mathcal{T}_{\gamma})$ there holds
		\begin{align}
			|I_{S_{1},S_{2}}(v,w) - Q^{n}_{S_{1},S_{2}}(v,w)| \le C_{s,\gamma,d} \: (q_{\Phi}p)^{d+4} (q_{\Phi}r)^{d+4}  \rho^{q_\Phi(p+r)-2n+1} \big|   v   \big|_{ H^{s} ( B_{S_{1},S_{2}}) } \big|   w   \big|_{ H^{s} ( B_{S_{1},S_{2}}) }
		\end{align}
		with the constant $C_{s,\gamma,d}$ depending only on $s$, $\gamma$, $d$ and $\Omega$; $q_\Phi$ is given by Proposition~\ref{2d_proposition}.
	\end{lemma}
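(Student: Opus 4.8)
The plan is to obtain the stated bound by collecting the two elementwise estimates already established in the two preceding subsections: one for the case $k := \dim(S_1 \cap S_2) \ge 0$ (adjacent or identical simplices) and one for $k = -1$ (separated simplices). In both cases the argument runs identically in spirit: transform $I_{S_1,S_2}(v,w)$ to the parameter cube $[0,1]^{2d}$ via Corollary~\ref{corollar_2d}, so that the integrand factorizes into the two polynomial quotients $\mathcal N_{\hat v}\circ\Phi_j / t_1$ and $\mathcal N_{\hat w}\circ\Phi_j / t_1$ (respectively $\mathcal N_{\hat v}\circ\Phi_{-1}$, $\mathcal N_{\hat w}\circ\Phi_{-1}$ for $k=-1$), the analytic kernel factor $\mathcal R_j$, the polynomial Jacobian $J_{\Phi_j}$, and the Gauss--Jacobi weight $t_1^{\beta_1}$; then apply the higher-dimensional analog of Lemma~\ref{lemma_quadabsch}, which reduces the quadrature error to $L^\infty$-norms of the integrand taken over products of the unit interval and a single Bernstein ellipse $\widehat{\mathcal E}_\rho$.

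With the two per-case estimates in hand, the unification is routine. First, since $\operatorname{co}(S_1,S_2) \subseteq B_{S_1,S_2}$ and the $H^s$-seminorm is monotone under enlargement of the integration domain, we have $|v|_{H^s(\operatorname{co}(S_1,S_2))} \le |v|_{H^s(B_{S_1,S_2})}$ and likewise for $w$, so the adjacent/identical estimate may be rewritten with the ball $B_{S_1,S_2}$ in place of the convex hull. Second, the polynomial prefactor $(q_\Phi p)^d (q_\Phi r)^d$ of the separated case is dominated by the prefactor $(q_\Phi p)^{d+4}(q_\Phi r)^{d+4}$ of the other case, so taking the larger power yields a single estimate valid for every $k \in \{-1,0,\dots,d\}$; the resulting constant and the base $\rho > 1$ then depend only on $s$, $\gamma$, $d$ and $\Omega$.

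The genuine content is not in this final collation but in the two subsections it rests upon, and I expect the same to be the crux of any independent proof. The decisive structural input is Corollary~\ref{corollar_2d}: the divisibility of $\mathcal N_{\hat v}\circ\Phi_j$ by $t_1$ (a consequence of the singular set $\vec x = \vec y$ lying inside the zero set of the piecewise polynomial $v$) is exactly what converts the non-integrable exponent $\alpha = -d-2s$ into the admissible Gauss--Jacobi exponent $\beta_1 = 1-2s+d-k > -1$. The main obstacle is then to secure a single Bernstein ellipse radius $\rho > 1$ uniform over all element pairs: this is where Assumption~\ref{assumption:finitely-many-patches} (finitely many patch configurations up to similarity) enters, together with the scaling normalization $\widehat A^{(i)} := h_{S_1}^{-1} A^{(i)}$, which forces the kernel factor $\mathcal R_j$ to range over a finite family of analytic functions bounded away from their singularities. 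Interwoven with this is the bookkeeping of the correct powers of $p$ and of $h_{S_i}$: the $(q_\Phi p)^{d+4}$ prefactor is accumulated from one Bernstein factor $\rho^{q_\Phi p}$, two first-order Markov-type inverse inequalities each contributing $(q_\Phi p)^2$ (applied after cancelling the removable $t_1^{-1}$ by the fundamental theorem of calculus), and a concluding $L^\infty$--$L^2$ inverse estimate on the reference simplex contributing $(q_\Phi p)^d$, while the $h$-powers must combine—via shape regularity and the distance estimate $\operatorname{dist}(S_1,S_2) \ge C\max\{h_{S_1},h_{S_2}\}$—into the scale-invariant seminorm $|v|_{H^s(B_{S_1,S_2})}$.
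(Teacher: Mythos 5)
Your proposal is correct and follows essentially the same route as the paper: the lemma is indeed proved there by combining the two elementwise estimates of the preceding subsections, using $\operatorname{co}(S_1,S_2)\subseteq B_{S_1,S_2}$ and dominating the separated-case prefactor $(q_\Phi p)^{d}(q_\Phi r)^{d}$ by $(q_\Phi p)^{d+4}(q_\Phi r)^{d+4}$. Your identification of the crux --- the divisibility of $\mathcal{N}_{\hat v}\circ\Phi_j$ by $t_1$ from Corollary~\ref{corollar_2d} turning $\alpha=-d-2s$ into the admissible exponent $\beta_1>-1$, and the uniform Bernstein radius $\rho>1$ secured via Assumption~\ref{assumption:finitely-many-patches} together with the normalization $\widehat A^{(i)}=h_{S_1}^{-1}A^{(i)}$ --- as well as your bookkeeping of the powers of $q_\Phi p$ and of the $h$-scalings matches the paper's argument.
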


%---------------------------------------
%---------------------------------------
\subsection{Treatment of $\Omega^c$}
%---------------------------------------
In this section, we discuss the issue that the evaluation of the bilinear form $a(\cdot,\cdot)$ requires the evaluation of 
$I_{S_{1},\Omega^c}$ given by (\ref{2D_int}).  This is addressed using two ingredients: 
\begin{enumerate}[nosep, label=(\roman*),leftmargin=*]
\item 
\label{item:omegac-i}
we select a set $B_R$ with $\overline{\Omega} \subset B_R$ (for convenience, 
this set will be taken to be a hypercube $[-R,R]^d$ below) and extend the mesh ${\mathcal T}_\gamma$ to a triangulation 
${\mathcal T}^R_\gamma$ of $B_R$ satisfying Assumption~\ref{assumption:finitely-many-patches}. For this triangulation, we may employ
the quadrature technique used above. 
\item 
\label{item:omegac-ii}
We develop a quadrature rule for integration over $B^c_R:= {\mathbb R}^d \setminus B_R$ and exploit that 
$\operatorname{dist}(T, B^c_R) \ge \operatorname{dist}(\Omega, B^c_R)  > 0$ together with analyticity of the integrand. 
\end{enumerate}
We focus on \ref{item:omegac-ii}. Let $B_R:=[-R,R]^d$ for a fixed $R > 0$. 
Introduce the cones ${\mathcal C}_1:= \{(y_1, y_1 y')\,|\, y_1 > R, y' \in [-1,1]^{d-1}\}$ as well as
${\mathcal C}_i$, $i=2,\ldots,2d$ obtained by rotating ${\mathcal C}_1$ so that the centerline of ${\mathcal C}_i$ is 
aligned with one of the unit vectors $(\pm 1,0,\ldots,0)$, $(0,\pm 1, 0,\ldots,0)$.  An integral of the kernel function over ${\mathcal C}_1$  
can be evaluated using the transformation $\eta = 1/y_1$ as follows: 
\begin{align*}
G_1(\vec{x})& := 
\int_{\vec{y} \in {\mathcal C}_1} |\vec{x} - \vec{y}|^{-(d+2s)}\, d\vec{y} 
= \int_{y' \in  [-1,1]^{d-1}} \int_{y_1=R}^\infty  |\vec{x} - y_1 (1,y')^\top|^{-(d+2s)} y_1^{d-1} \, dy'\, dy_1 \\
 &= \int_{y' \in  [-1,1]^{d-1}} \int_{\eta=0}^{1/R} \underbrace{| \eta \vec{x} -(1,y')^\top|^{-(d+2s)}}_{=:\mathcal{G}_1(\vec{x},\eta,y')} \eta^{2s-1} \, d\eta \, dy'. 
\end{align*}
This suggests to use a tensor product quadrature with (product) Gauss-Legendre quadrature in the $y'$-variables and a Gauss-Jacobi quadrature with weight $\eta^{2s-1}$
in the $\eta$-variable. 
Key to the performance of the quadrature rule is the analyticity of the function ${\mathcal G}_1$: 
\begin{lemma}
\label{lemma:omegac-analyticity}
Let $\overline{\Omega} \subset B_R$. Then:
\begin{enumerate}[nosep, label=(\roman*),leftmargin=*]
\item
\label{item:lemma:omegac-analyticity-i}
The function $G_1$ is analytic on $\overline{\Omega}$. 
\item
\label{item:lemma:omegac-analyticity-ii}
The function ${\mathcal G}_1$ is analytic on $\overline{\Omega}  \times [0,1/R] \times [-1,1]^{d-1}$. 
\item 
\label{item:lemma:omegac-analyticity-iii}
The functions $G_1$ and ${\mathcal G}_1$ are positive on $\overline{\Omega}$ and 
$\overline{\Omega}  \times [0,1/R] \times [-1,1]^{d-1}$, respectively. 
\end{enumerate}
Analyticity of a function $G$ on a closed set $A\subset {\mathbb R}^n$ means that there is a complex neighborhood $A_\varepsilon \subset {\mathbb C}^n$
of $A$ and a function $G_\varepsilon$ holomorphic on $A_\varepsilon$ with $G_\varepsilon|_A = G$. 
\end{lemma}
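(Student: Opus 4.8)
The plan is to treat the three assertions in their natural logical order: first establish the analyticity of the integrand ${\mathcal G}_1$ in \ref{item:lemma:omegac-analyticity-ii}, then deduce the analyticity of $G_1$ in \ref{item:lemma:omegac-analyticity-i} by an argument for parameter-dependent integrals, and finally observe that the positivity in \ref{item:lemma:omegac-analyticity-iii} is immediate. The starting point is the algebraic structure
\[
{\mathcal G}_1(\vec{x},\eta,y') = P(\vec{z}(\vec{x},\eta,y'))^{-(d+2s)/2}, \qquad \vec{z} := \eta\vec{x} - (1,y')^\top, \quad P(\vec{z}) := z_1^2 + \cdots + z_d^2,
\]
so that ${\mathcal G}_1$ is a composition of the polynomial (hence entire) maps $(\vec{x},\eta,y')\mapsto \vec{z}$ and $\vec{z}\mapsto P(\vec{z})$ with the power function $w\mapsto w^{-(d+2s)/2}$, which is holomorphic on ${\mathbb C}\setminus(-\infty,0]$ for the principal branch.

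For \ref{item:lemma:omegac-analyticity-ii} the decisive point is to show that $P(\vec{z})$ stays away from the branch cut. On the real domain $\overline{\Omega}\times[0,1/R]\times[-1,1]^{d-1}$ I would use the separation $\operatorname{dist}(\Omega,B_R^c) > 0$ emphasized before the lemma; writing $\delta := \operatorname{dist}(\Omega,B_R^c)$ and noting that the distance of $\vec{x}$ to the complement of the cube equals $\min_i(R-|x_i|)$, one gets $|x_1|\le R-\delta$ for $\vec{x}\in\overline\Omega$, so that $\eta x_1 \le (1/R)(R-\delta) = 1-\delta/R$ and hence $|\eta x_1 - 1| \ge \delta/R > 0$. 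Consequently $P(\vec{z}) \ge (\eta x_1 - 1)^2 \ge (\delta/R)^2 > 0$ throughout the compact real domain. Since $P$ is continuous and the real domain is compact, $P$ maps it into a compact subset of $(0,\infty)$; by continuity $P$ therefore maps a sufficiently small complex neighborhood of the real domain into a neighborhood of that compact set which still avoids $(-\infty,0]$. On this complex neighborhood the composition defining ${\mathcal G}_1$ is holomorphic, which is precisely the claimed analyticity.

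For \ref{item:lemma:omegac-analyticity-i} I would regard $G_1(\vec{x}) = \int_{[-1,1]^{d-1}}\int_0^{1/R}{\mathcal G}_1(\vec{x},\eta,y')\,\eta^{2s-1}\,d\eta\,dy'$ as a parameter-dependent integral and pass analyticity through the integral sign. The weight $\eta^{2s-1}$ is integrable on $(0,1/R)$ since $2s-1 > -1$ for $s\in(0,1)$, and by \ref{item:lemma:omegac-analyticity-ii} there is a fixed complex neighborhood $U$ of $\overline\Omega$ on which $\vec{x}\mapsto{\mathcal G}_1(\vec{x},\eta,y')$ is holomorphic for every fixed $(\eta,y')$, with a uniform bound $|{\mathcal G}_1|\le M$ following from the lower bound on $P$ and compactness. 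Standard results on holomorphy of parameter integrals (via Morera's theorem together with Fubini, or differentiation under the integral sign combined with Hartogs' theorem in several variables) then show that $G_1$ extends holomorphically to $U$. Assertion \ref{item:lemma:omegac-analyticity-iii} is immediate: on the real domains both $P(\vec{z})^{-(d+2s)/2}$ and the original kernel $|\vec{x}-\vec{y}|^{-(d+2s)}$ are strictly positive, and $\eta^{2s-1} > 0$ for $\eta > 0$, so that ${\mathcal G}_1 > 0$ and $G_1$, being the integral of a strictly positive integrand against a positive measure (equivalently, the integral of the positive kernel over ${\mathcal C}_1$), is positive as well. I expect the main obstacle to be the verification in \ref{item:lemma:omegac-analyticity-ii} that complexification does not push $P(\vec{z})$ onto the branch cut; this rests entirely on the uniform lower bound $|\eta x_1 - 1|\ge\delta/R$, which encodes the separation $\operatorname{dist}(\Omega,B_R^c) > 0$ and is exactly the reason the cone decomposition with apex outside $B_R$ is employed.
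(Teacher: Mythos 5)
Your proposal is correct and takes essentially the same approach as the paper: both factor ${\mathcal G}_1$ through the entire function $\widehat G(\vec{x},\eta,y') = (\eta x_1-1)^2+\sum_{i=2}^d(\eta x_i-y_i')^2$, prove its strict positivity on the compact real domain using $\operatorname{dist}(\Omega,B_R^c)>0$, and extend holomorphically to a complex neighborhood via the principal branch of $w\mapsto w^{-(d+2s)/2}$. Your explicit lower bound $(\delta/R)^2$ (in place of the paper's case split $\eta>0$ versus $\eta=0$ combined with compactness), your Morera/Fubini elaboration of part (i), and your explicit treatment of the positivity assertion (iii) merely fill in steps the paper leaves implicit.
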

\begin{proof}
\emph{Proof of \ref{item:lemma:omegac-analyticity-ii}:} Consider the function 
$\widehat G(x_1,\ldots,x_d,\eta,y^\prime_2,\ldots,y^\prime_d):= (\eta x_1 - 1)^2 + \sum_{i=2}^d (\eta x_i - y^\prime_i)^2$, 
which is an entire function on ${\mathbb C}^{2d}$. We claim that $\widehat{G}(\vec{x},\eta,y^\prime) > 0$ 
on $K:= \overline{\Omega}  \times [0,1/R] \times [-1,1]^{d-1}$. By smoothness of $\widehat G$ and compactness of the set $K$
it suffices to show pointwise positivity of $\widehat{G}$. 
By construction of $\widehat{G}$, we have $\widehat{G} \ge (\eta \operatorname{dist}(\Omega,B^c_R))^2 > 0$ for $\eta > 0$. 
For $\eta = 0$, we have $\widehat{G}(\vec{x},0,y^\prime) = |(1,y^\prime)|^2 \ge 1$. 
Next, by positivity of $\widehat{G}$ on $K$ and the smoothness of $\widehat{G}$, there is a complex neighborhood 
$K_\varepsilon:= \cup_{z \in K} B_\varepsilon(z) \subset {\mathbb C}^{2d}$ such that $\operatorname{Re} \widehat G > 0$ on $D_\varepsilon$. 
Hence, with the principal branch of the logarithm, the function $\exp(-\frac{d+2s}{2} \log \widehat G(z))$ is holomorphic 
on $K_\varepsilon$ and coincides with $\mathcal{G}_1$ on $K$. 

\emph{Proof of \ref{item:lemma:omegac-analyticity-i}:} This follows from \ref{item:lemma:omegac-analyticity-ii}.
\end{proof}
In total, we have arrived at 
\begin{align*}
I_{S_{1},\Omega^c}(v,w) & = \sum_{S_{2} \in {\mathcal T}^R_\gamma \setminus {\mathcal T}_\gamma} I_{S_{1},S_{2}}(v,w) + \sum_{i=1}^{2d} \int_{\vec{x} \in T} G_i(\vec{x}) v(\vec{x}) w(\vec{x})\,d\vec{x}, 
\end{align*}
where the functions $G_i$, $i \ge 2$, are defined as $G_1$ with ${\mathcal C}_1$ replaced with ${\mathcal C}_i$. Analogous to 
Lemma~\ref{lemma:omegac-analyticity}, the functions $G_i$ and the corresponding integrands ${\mathcal G}_i$ are analytic. For a fully discrete approximation
of $I_{S_{1},\Omega^c}(v,w)$, we denote by $Q^n_{S_{1},{\mathcal C}_1}(v,w)$ the quadrature rule to evaluate 
$$
\int_{\vec{x} \in S_{1}} v(\vec{x}) w(\vec{x}) \int_{y^\prime \in [-1,1]^{d-1}} \int_{\eta=0}^{1 /R} {\mathcal G}_1(\vec{x},\eta,y^\prime) \eta^{2s-1} \,d\eta dy^\prime d\vec{x}
$$
with a tensor product Gauss-Legendre rule (with $n$ points for each variable) for the integration in $y^\prime$, a Gauss-Jacobi rule (with $n$ points) for the integration in 
$\eta$, and the tensor product Gauss-Legendre rule \eqref{Gauss_Simplex} for the integration in $\vec{x}$ over the simplex $S_{1}$. Analogously, we define rules $Q^n_{S_{1},{\mathcal C}_i}$, $i \ge 2$. The fully discrete approximation
is then given by
\begin{align}
\label{eq:Qnomegac}
I_{S_{1},\Omega^c}(v,w) & \approx Q^n_{S_{1},\Omega^c}(v,w):= \sum_{S_{2} \in {\mathcal T}^R_\gamma \setminus {\mathcal T}_\gamma} Q^n_{S_{1},S_{2}}(v,w) 
+ \sum_{i=1}^{2d} Q^n_{S_{1},{\mathcal C}_i} (v,w). 
\end{align} 

\begin{remark}
The function $\vec{x} \mapsto \int_{B^c_R} |\vec{x} - \vec{y}|^{-(d+2s)}\, d\vec{y}$ is analytic on $\overline{\Omega}$. Hence, 
it could be approximated by a (piecewise) polynomial on a coarse mesh. A computational speed-up is then possible since the evaluation of the 
$Q^n_{S_{1},{\mathcal C}_i}(v,w)$ can be replaced with the evaluation of $\int_{S_{1}} v(\vec{x}) w(\vec{x}) \pi(\vec{x})\,d\vec{x}$ for some polynomials $\pi$. Precomputing 
on the reference element is an option.  
\eremk
\end{remark}

%--------------------------

%---------------------------	
\subsection{Exponential convergence under quadrature} 
%---------------------------	

Combining the approximation results for the integrals $I_{S_{1},S_{2}}(v,w)$ and $I_{S_{1},\Omega^c}(v,w)$ from the previous subsections, we directly arrive at an error estimate for the consistency error for the bilinear form $a(\cdot,\cdot)$.
	
	\begin{lemma}[Consistency error for $a$ for $d>1$] \label{consistency_error_lemma_bilinear_multid}
		Let  $\mathcal{T}_{\gamma}$ be a $\gamma$-shape regular mesh of $\Omega \subset \mathbb{R}^{d}$, $R>0$ be such that $\Omega \subset [-R,R]^d$ and $\mathcal{T}_{\gamma}^R$ be a $\gamma$-shape regular mesh that extends the mesh $\mathcal{T}_{\gamma}$ to $[-R,R]^d$. 
Assume ${\mathcal T}^R_\gamma$ satisfies Assumption~\ref{assumption:finitely-many-patches}.
Let $ a(\cdot,\cdot) $ be the bilinear form of (\ref{weak_formulationMulti}) and $ \widetilde{a}_{n}(\cdot,\cdot) $ be its approximation given by \eqref{bilinear_aprox_definition_mehrd}. Then, there exists a constant $\rho > 1$ that depends only on the shape regularity constant $\gamma$  and $\Omega$ such that for all $v \in S_{0}^{p,1}(\mathcal{T}_{\gamma}) $ and $w \in S_{0}^{r,1}(\mathcal{T}_{\gamma})$ there holds 
		\begin{align}
			|a(v,w)-\widetilde{a}_{n}(v,w)| \le C_{s,\gamma,d}(\# \mathcal{T}^R_{\gamma})^{2}p^{d+4} r^{d+4}  \rho^{q_\Phi(r+p)-2n+1} \|v\|_{\widetilde{H}^{s}(\Omega)} \|w\|_{\widetilde{H}^{s}(\Omega)},
		\end{align}
with constants $C_{s,\gamma,d}$ depending only on $s,\gamma,d$ and $\Omega$; $q_\Phi$ is given by Proposition~\ref{2d_proposition}. 
	\end{lemma}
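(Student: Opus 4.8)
The plan is to prove Lemma~\ref{consistency_error_lemma_bilinear_multid} by decomposing the global consistency error into elementwise contributions over pairs of simplices and applying the per-pair estimate of Lemma~\ref{consistency_error_lemma_multi}, exactly mirroring the structure of the $1d$ proof of Lemma~\ref{consistency_error_lemma_bilinear}. The starting point is the definition \eqref{bilinear_aprox_definition_mehrd} of $\widetilde{a}_n$ together with the splitting of $a(\cdot,\cdot)$ into the double sum over $S_1,S_2 \in \mathcal{T}_\gamma$ and the complement term $I_{S_1,\Omega^c}$. By the triangle inequality we bound
\begin{align*}
|a(v,w)-\widetilde{a}_{n}(v,w)| \le \frac{C(s,d)}{2}\sum_{S_1 \in {\mathcal T}_\gamma}\sum_{S_2 \in {\mathcal T}_\gamma}|I_{S_1,S_2}(v,w)-Q^n_{S_1,S_2}(v,w)| + \sum_{S_1 \in {\mathcal T}_\gamma}|I_{S_1,\Omega^c}(v,w)-Q^n_{S_1,\Omega^c}(v,w)|.
\end{align*}

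First I would handle the double sum using Lemma~\ref{consistency_error_lemma_multi} directly, which gives for each pair the bound $C_{s,\gamma,d}(q_\Phi p)^{d+4}(q_\Phi r)^{d+4}\rho^{q_\Phi(p+r)-2n+1}|v|_{H^s(B_{S_1,S_2})}|w|_{H^s(B_{S_1,S_2})}$. After pulling the $n$-, $p$-, $r$-dependent prefactor out of the sum (absorbing the fixed constant $q_\Phi$ into $C_{s,\gamma,d}$), the remaining task is to control $\sum_{S_1,S_2}|v|_{H^s(B_{S_1,S_2})}|w|_{H^s(B_{S_1,S_2})}$ by $(\#{\mathcal T}^R_\gamma)^2\|v\|^2_{\widetilde H^s(\Omega)}$-type quantities. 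Since each seminorm over the ball $B_{S_1,S_2}$ is bounded by the full seminorm $|v|_{H^s({\mathbb R}^d)} \sim \|v\|_{\widetilde H^s(\Omega)}$ (using the norm equivalence recalled in Section~\ref{ch:main_result}), Cauchy--Schwarz over the $(\#{\mathcal T}_\gamma)^2$ pairs yields the factor $(\#{\mathcal T}_\gamma)^2 \le (\#{\mathcal T}^R_\gamma)^2$.

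Next I would treat the complement term. Using the representation of $I_{S_1,\Omega^c}$ as a sum over the extension simplices $S_2 \in {\mathcal T}^R_\gamma \setminus {\mathcal T}_\gamma$ plus the cone contributions $\sum_{i=1}^{2d}\int_T G_i v w$, and the matching decomposition \eqref{eq:Qnomegac} of $Q^n_{S_1,\Omega^c}$, the error splits into finitely many simplex-pair errors (again controlled by Lemma~\ref{consistency_error_lemma_multi}) and the cone-quadrature errors. For the latter, Lemma~\ref{lemma:omegac-analyticity} guarantees analyticity of each ${\mathcal G}_i$ on the relevant closed set, so the tensor product Gauss-Jacobi/Gauss-Legendre rule converges at a rate $\rho^{-2n}$ by Proposition~\ref{2d_proposition2} (or the higher-dimensional analog of Lemma~\ref{lemma_quadabsch}); the polynomial factors $v\cdot w$ contribute the usual $p^{?}r^{?}$ powers handled exactly as in the separated-simplex scaling argument, and the resulting seminorms are again bounded by $\|v\|_{\widetilde H^s(\Omega)}\|w\|_{\widetilde H^s(\Omega)}$. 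Collecting all $\#{\mathcal T}^R_\gamma$ outer terms and absorbing the fixed constant $2d$ into $C_{s,\gamma,d}$ produces the claimed $(\#{\mathcal T}^R_\gamma)^2$ factor.

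\textbf{The main obstacle} I anticipate is bookkeeping rather than a deep difficulty: one must verify that the $\rho$ appearing in the cone estimates can be taken to be the same (or at least the minimum) as the $\rho$ from Lemma~\ref{consistency_error_lemma_multi}, and that the polynomial prefactor in the cone term does not exceed $p^{d+4}r^{d+4}$, so that a single clean bound emerges. This is ensured by taking the minimum over the finitely many holomorphy radii (finite by Assumption~\ref{assumption:finitely-many-patches}) and by noting $\operatorname{dist}(\Omega,B^c_R)>0$ gives a fixed positive separation for the cone integrand, exactly as in the well-separated case. The only genuine care needed is to confirm that the seminorm $|v|_{H^s(B_{S_1,S_2})}$, which is taken over a ball that may extend slightly beyond $\Omega$ into the extension region where $v$ vanishes, is still dominated by $\|v\|_{\widetilde H^s(\Omega)}$; this holds because $v \in S^{p,1}_0({\mathcal T}_\gamma)$ is extended by zero and $|v|_{H^s({\mathbb R}^d)}$ controls all such local seminorms.
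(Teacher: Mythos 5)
Your proposal is correct and follows essentially the same route as the paper's proof: the same triangle-inequality decomposition into simplex-pair errors (each bounded via Lemma~\ref{consistency_error_lemma_multi} and summed as in the $1d$ case \eqref{proof_of_consistency_error_lemma_bilinear}), the same splitting of the complement term into extension-simplex pairs plus cone contributions, and the same treatment of the cone integrals as analytic-weight integrals handled like the linear form via Lemma~\ref{consistency_error_lemma_functional_multi}. Your additional remarks on taking the minimum over the finitely many holomorphy radii and on bounding $|v|_{H^s(B_{S_1,S_2})}$ by $\|v\|_{\widetilde{H}^s(\Omega)}$ via the zero extension are exactly the (implicit) bookkeeping the paper relies on.
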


\begin{proof}
By definition of $a_n(\cdot,\cdot)$, we have to distinguish the cases of  double integrals over simplices $I_{S_1,S_2}$ and integrals involving the complement $I_{S_{1},\Omega^c}$ and their approximation. The first case  can be done in the same way as for $d=1$ in \eqref{proof_of_consistency_error_lemma_bilinear}. 

For the integrals $I_{S_{1},\Omega^c}$ and their approximations $Q^n_{S_{1},\Omega^c}$, we mention that the contribution $Q^n_{S_{1},S_{2}}$ with $S_{2} \in {\mathcal T}^R_\gamma \setminus {\mathcal T}_\gamma$ can be treated as in the first case, replacing only the term $\# \mathcal{T}_{\gamma}$ by $\# \mathcal{T}^R_{\gamma}$. The other contributions of the form $Q^n_{S_{1},{\mathcal C}_i} (v,w)$ correspond to approximation of $\int_{S_{1}} v(\vec{x}) w(\vec{x}) G_i(\vec{x})\, d\vec{x}$ with analytic functions $G_i$ and thus take the same form as the integrals involved in the linear form $l(\cdot)$. Thus, a combination of Lemma~\ref{consistency_error_lemma_multi} and Lemma~\ref{consistency_error_lemma_functional_multi} together with summation over all simplices gives the result.
\end{proof}

	With the estimate for the consistency error, we directly obtain uniform coercivity as in the one dimensional case by a perturbation argument as described 
in Lemma~\ref{lemma:uniform_coercivity}. Note that the integral transformations for $d>1$ induce an additional constant $q_\Phi$ in the exponential term in the consistency error. In order to compensate for that the number of quadrature points now has to grow like $\lambda p$ for some $\lambda>1$.

	\begin{theorem}[Uniform coercivity, $d>1$]
		\label{th:uniform_coercivity_mehrd}
		Let the assumptions of Lemma~\ref{consistency_error_lemma_bilinear_multid} hold. Then, there are constant  $\widetilde{\alpha}, \lambda_1, \lambda_2 >0$  
		depending only on $s$, the shape regularity constant $\gamma$, the dimension $d$, and $\Omega$
		such that for $n\ge \lambda_1 p + \lambda_2 \ln(\# \mathcal{T}^R_{\gamma}+1)$ there holds 
		\begin{align}
			\widetilde{\alpha} \|v\|^{2}_{\widetilde{H}^{s}(\Omega)} \le \widetilde{a}_{n}(v,v) ~~~~\text{for all } v \in S^{p,1}_0(\mathcal{T}_{\gamma}).
		\end{align}
	\end{theorem}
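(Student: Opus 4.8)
The plan is to mimic the one-dimensional perturbation argument of Lemma~\ref{lemma:uniform_coercivity}, the only structural novelty being that the consistency estimate of Lemma~\ref{consistency_error_lemma_bilinear_multid} carries the transformation degree $q_\Phi$ in the exponent of $\rho$. I start from the coercivity of the exact bilinear form $a(\cdot,\cdot)$ on $\widetilde H^s(\Omega)$ (from the Lax--Milgram setting of \eqref{weak_formulationMulti}), which yields a constant $\alpha > 0$ with $\alpha\|v\|^2_{\widetilde H^s(\Omega)} \le a(v,v)$ for all $v \in S^{p,1}_0(\mathcal T_\gamma)$. Writing $a(v,v) \le \widetilde a_n(v,v) + |a(v,v) - \widetilde a_n(v,v)|$ and applying Lemma~\ref{consistency_error_lemma_bilinear_multid} with $r = p$ gives
\[
\alpha \|v\|^2_{\widetilde H^s(\Omega)} \le \widetilde a_n(v,v) + C_{s,\gamma,d}\, (\#\mathcal T^R_\gamma)^2\, p^{2(d+4)}\, \rho^{2 q_\Phi p - 2n + 1}\, \|v\|^2_{\widetilde H^s(\Omega)}.
\]
It therefore suffices to choose $n$ so large that the perturbation term is bounded by $\tfrac{\alpha}{2}\|v\|^2_{\widetilde H^s(\Omega)}$, whence the claim follows with $\widetilde\alpha := \alpha/2$.

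For the choice of $n$, I would set $n \ge \lambda_1 p + \lambda_2 \ln(\#\mathcal T^R_\gamma + 1)$ and split the decay of $\rho^{2q_\Phi p - 2n + 1}$ into one piece that absorbs the polynomial factor $p^{2(d+4)}$ and another that absorbs $(\#\mathcal T^R_\gamma)^2$. Concretely, selecting $\lambda_1 > q_\Phi$ makes the exponent contribute a genuine exponential decay $\rho^{-2(\lambda_1 - q_\Phi)p}$ in $p$, which dominates $p^{2(d+4)}$ uniformly in $p$; and choosing $\lambda_2$ large enough (at least $1/\ln\rho$, plus a further additive amount to swallow the constant $C_{s,\gamma,d}$ and the factor $\alpha$) turns $(\#\mathcal T^R_\gamma+1)^{-2\lambda_2\ln\rho}$ into a factor that both cancels $(\#\mathcal T^R_\gamma)^2$ and produces the required $\alpha/2$ bound. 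This is the exact analog of the three-bullet computation in the proof of Lemma~\ref{lemma:uniform_coercivity}, now carried out with the slope $q_\Phi$ in front of $p$; since $\rho$, $\alpha$, and $q_\Phi$ depend only on $s$, $\gamma$, $d$, and $\Omega$, the resulting $\lambda_1,\lambda_2,\widetilde\alpha$ inherit precisely these dependencies.

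The only point that genuinely differs from the one-dimensional case --- and hence the main thing to get right --- is that the exponent $2q_\Phi p - 2n$ grows \emph{linearly} in $p$ rather than being $O(\ln p)$. In 1D the analogous exponent was $2p - 2n$, so a logarithmic oversampling $n = p + O(\ln(\cdots))$ sufficed; here the transformation degree $q_\Phi \ge 1$ from Proposition~\ref{2d_proposition} inflates the effective polynomial degree and forces the number of quadrature points to grow linearly in $p$ with slope strictly exceeding $q_\Phi$. Ensuring $\lambda_1 > q_\Phi$ is precisely what compensates for this $q_\Phi$-fold inflation introduced by the regularizing transformations, and once this is in place the remaining estimates are routine and identical in spirit to the $d=1$ argument.
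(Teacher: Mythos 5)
Your proposal is correct and is exactly the argument the paper intends: the paper gives no separate proof of Theorem~\ref{th:uniform_coercivity_mehrd}, stating only that it follows ``by a perturbation argument as described in Lemma~\ref{lemma:uniform_coercivity}'' with the caveat that the factor $q_\Phi$ in the exponent forces $n$ to grow linearly in $p$, which is precisely your coercivity-plus-consistency estimate with $r=p$ and the choice $\lambda_1 > q_\Phi$. Your identification of the linear (rather than logarithmic) oversampling as the sole structural difference from the $d=1$ case matches the paper's own remark preceding the theorem.
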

		
	Now, employing the Strang Lemma, we can derive a result similar to Theorem~\ref{main_result} for $d>1$ by the exact same arguments. The error of the fully discrete FEM approximation can be bounded by the exact FEM error and a consistency error that decays exponentially in the number of quadrature points. 
	\begin{theorem}[exponential convergence under quadrature, $d>1$]
		\label{theorem_approx_galerkin_mehrd}
		Let $\mathcal{T}_{\gamma}$ be a $\gamma$-shape regular mesh of the bounded polyhedron $ \Omega \subset \mathbb{R}^{d}$. 
Let $R>0$ be such that $\Omega \subset [-R,R]^d$ and $\mathcal{T}_{\gamma}^R$ be a $\gamma$-shape regular mesh that extends the mesh $\mathcal{T}_{\gamma}$ to $[-R,R]^d$. 
Assume ${\mathcal T}^R_\gamma$ satisfies Assumption~\ref{assumption:finitely-many-patches}.
Let $f$ be analytic in $\overline{\Omega}$. Denote by $u \in \widetilde{H}^s(\Omega)$ the solution to \eqref{weak_formulationMulti}, by $u_{r} \in S_{0}^{r,1}(\mathcal{T}_{\gamma})$ the FEM solution for the exact variational formulation in the space $ S_{0}^{r,1}(\mathcal{T}_{\gamma}) \subseteq S_{0}^{p,1}(\mathcal{T}_{\gamma})$  and by $\widetilde{u}_{N,n} \in S_{0}^{p,1}(\mathcal{T}_{\gamma})$ the solution to
$$
\widetilde a_n(\widetilde{u}_{N,n},v_N) = \widetilde{l}(v_N) \qquad \forall v_N \in S_{0}^{p,1}(\mathcal{T}_{\gamma}),
$$
where $ \widetilde{a}_{n}(\cdot,\cdot) $ and $ \widetilde{l}_{n}(\cdot) $ are defined in (\ref{bilinear_aprox_definition_mehrd}) and (\ref{functional_aprox_definition_multi}). The index $n$ indicates the number of quadrature points that is used per coordinate direction per integral and element.
		
		Then, there exist constants $\rho>1,$ $\lambda_{1}$, $\lambda_{2}$, $C_{s,\gamma,d}>0$ (depending only on $s$, $\Omega$, $d$, $\gamma$), 
such that for all $p$, $\# \mathcal{T}_{\gamma}^R$ and $n$ with $n\ge \lambda_1 p+ \lambda_2\ln(\# \mathcal{T}_{\gamma}^R+1))$ and $r \in \mathbb{N} $ with $1 \le r \le p$ there holds
		\begin{align}\label{ungl_main_result_mehrd}
			\|u-\widetilde{u}_{N,n}\|_{\widetilde{H}^{s}(\Omega)} &\le \|u-u_{r}\|_{\widetilde{H}^{s}(\Omega)} + C_{s,\gamma,d} (\# \mathcal{T}^R_{\gamma})^{2} p^{d+4} r^{d+4} \rho^{q_\Phi(p+r)-2n+1};
		\end{align}
the constant is $q_\Phi$ given by Proposition~\ref{2d_proposition}. The number of operations to compute the stiffness is 
${\mathcal{O}}((n p) ^{2d} (\#{\mathcal T}^R_\gamma)^2)$. 
	\end{theorem}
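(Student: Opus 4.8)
The plan is to follow the identical three-step strategy used for Theorem~\ref{main_result} in the one-dimensional case, now invoking the higher-dimensional analogues of the constituent lemmas. First I would apply the First Strang Lemma (Lemma~\ref{lemma:strang}) to the fully discrete problem. This requires a uniform coercivity constant $\widetilde\alpha_n$, which is precisely what Theorem~\ref{th:uniform_coercivity_mehrd} supplies under the stated hypothesis $n \ge \lambda_1 p + \lambda_2 \ln(\#\mathcal{T}^R_\gamma + 1)$; this fixes $\widetilde\alpha_n = \widetilde\alpha > 0$ independently of $p$ and the mesh, so that the prefactor $1 + C_a/\widetilde\alpha_n$ in the Strang estimate is bounded by a constant depending only on $s$, $\gamma$, $d$, and $\Omega$.

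Second, in the infimum over $S^{p,1}_0(\mathcal{T}_\gamma)$ appearing in the Strang Lemma I would make the specific choice $v = u_r \in S^{r,1}_0(\mathcal{T}_\gamma) \subseteq S^{p,1}_0(\mathcal{T}_\gamma)$, where $u_r$ is the exact Galerkin solution in the coarser space. This reproduces the best-approximation term $\|u - u_r\|_{\widetilde H^s(\Omega)}$ kept explicitly in \eqref{ungl_main_result_mehrd}. The two consistency errors are then controlled by the multidimensional estimates already established: Lemma~\ref{consistency_error_lemma_bilinear_multid} bounds $|a(u_r,w) - \widetilde a_n(u_r,w)|$ by $C_{s,\gamma,d}(\#\mathcal{T}^R_\gamma)^2 r^{d+4} p^{d+4} \rho^{q_\Phi(r+p)-2n+1}\|u_r\|_{\widetilde H^s(\Omega)}\|w\|_{\widetilde H^s(\Omega)}$, while Lemma~\ref{consistency_error_lemma_functional_multi} bounds $|l(w)-\widetilde l_n(w)|$ by a strictly smaller term involving only $p$ and $\rho^{p-2n+1}$. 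Dividing by $\|w\|_{\widetilde H^s(\Omega)}$ and taking the supremum removes the $w$-dependence; the remaining factor $\|u_r\|_{\widetilde H^s(\Omega)}$ is absorbed using the a priori bound $\|u_r\|_{\widetilde H^s(\Omega)} \le C\|f\|_{L^2(\Omega)}$, which follows from coercivity of $a(\cdot,\cdot)$ and the Cauchy-Schwarz inequality. Since the linear-form contribution is dominated by the bilinear-form term, collecting the pieces yields \eqref{ungl_main_result_mehrd}.

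For the complexity claim I would count the operations for the direct (naive) assembly of the stiffness matrix. Each simplex carries $\dim \mathcal{P}_p(\mathbb{R}^d) = \binom{p+d}{d} = \mathcal{O}(p^d)$ basis functions, so each pair $(S_1,S_2)$ of simplices contributes $\mathcal{O}(p^{2d})$ matrix entries. Every entry is evaluated by a tensor product quadrature rule on $[0,1]^{2d}$ assembled from $2d$ one-dimensional rules with $n$ nodes each, hence at a cost of $\mathcal{O}(n^{2d})$ operations. Multiplying by the $\mathcal{O}((\#\mathcal{T}^R_\gamma)^2)$ pairs of simplices, where the extended mesh $\mathcal{T}^R_\gamma$ governs the count because the complement integrals $I_{S_1,\Omega^c}$ are reduced via \eqref{eq:Qnomegac} to contributions over $\mathcal{T}^R_\gamma \setminus \mathcal{T}_\gamma$ and the $2d$ fixed cones, yields the stated bound $\mathcal{O}((np)^{2d}(\#\mathcal{T}^R_\gamma)^2)$.

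The genuinely new content relative to the $d=1$ proof is concentrated entirely in the supporting lemmas (the regularizing transformations of Corollary~\ref{corollar_2d}, the scaling analysis underlying Lemma~\ref{consistency_error_lemma_multi}, and the treatment of $\Omega^c$ via the cone decomposition), so once those are in hand the present assembly is essentially bookkeeping. The only point requiring care is ensuring that the factor $q_\Phi$ from the polynomial transformations is tracked correctly into the exponent $\rho^{q_\Phi(p+r)-2n+1}$, and that the growth of $n$ with the factor $\lambda_1 > 1$ genuinely compensates this $q_\Phi$ inside the exponential; this compensation is exactly the role played by the modified coercivity condition of Theorem~\ref{th:uniform_coercivity_mehrd}, and I expect no further obstacle.
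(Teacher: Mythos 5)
Your proposal matches the paper's argument: the paper proves this theorem ``by the exact same arguments'' as Theorem~\ref{main_result}, i.e., the First Strang Lemma with the uniform coercivity of Theorem~\ref{th:uniform_coercivity_mehrd}, the choice $v = u_r$ in the infimum together with Lemmas~\ref{consistency_error_lemma_bilinear_multid} and \ref{consistency_error_lemma_functional_multi}, and a direct operation count for the quadrature-based assembly. Your bookkeeping of the $q_\Phi$-factor in the exponent and the $\mathcal{O}(p^{2d})\cdot\mathcal{O}(n^{2d})\cdot\mathcal{O}((\#\mathcal{T}^R_\gamma)^2)$ complexity count are both consistent with the stated bounds, so the proof is correct and essentially identical to the paper's.
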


\begin{remark}
The treatment of the complementary part $\Omega^c$ in the bilinear form induces the appearance of the term $\# \mathcal{T}^R_{\gamma}$ in the error estimate 
(\ref{ungl_main_result_mehrd}). 
In the context of shape-regular $hp$-FEM a natural choice for our model problem are ``boundary concentrated meshes'' both for $\mathcal{T}_\gamma$ and $\mathcal{T}^R_{\gamma}$ that are refined towards $\partial\Omega$ as discussed in \cite{khoromskij-melenk03}. 
The total number of elements is then proportional to the number of elements touching the boundary $\partial \Omega$ and thus $\# \mathcal{T}^R_{\gamma}$ is proportional to $\# \mathcal{T}_{\gamma}$.
\eremk
\end{remark}
	
%-------------------------
\section{Numerical experiments} \label{ch:numerics}
%-------------------------
In this section, we present some numerical examples that underline the theoretical estimates in our main results, Theorem~\ref{main_result}.
We consider 
\begin{align*}
(-\Delta)^s u = 1 \quad \text{ in } \Omega := (-1,1)  \text{ and } u = 0 ~ \text{ on } \Omega^c,
\end{align*}
with exact solution $u(x) = 2^{-2s}\sqrt{\pi}(\Gamma(s+1/2) \Gamma(1+s))^{-1} (1-x^{2})^{s}$.
\bigskip

In the following, we will present three different approaches to estimate the energy norm error 
between the exact solution $u$ and the fully discrete $hp$-FEM approximation $\widetilde{u}_{N,n}$
$$
\sqrt{a(u-\widetilde{u}_{N,n}, u-\widetilde{u}_{N,n})} = \sqrt{a(u,u)-a(\widetilde{u}_{N,n}, \widetilde{u}_{N,n})-2a(u-\widetilde{u}_{N,n}, \widetilde{u}_{N,n}}). 
$$ 
If the quadrature error is ignored, i.e., if it is assumed that $u_{N} = \widetilde{u}_{N,n}$, then Galerkin orthogonality 
$a(u - \widetilde{u}_{N,n}, \widetilde{u}_{N,n}) = 0$ \revision{holds} and, assuming that $a(u,u)$ is known, the error can be computed as the square root of the difference of the energies. The exact energy $a(\widetilde{u}_{N,n}, \widetilde{u}_{N,n})$ of $\widetilde{u}_{N,n}$ can in general 
only be approximated by quadrature, leading to an error estimate of the form
\begin{align}\label{energy_norm_approx}
	\sqrt{a(u-\widetilde{u}_{N,n}, u-\widetilde{u}_{N,n})} \approx \sqrt{a(u,u)-\widetilde{a}_{m}(\widetilde{u}_{N,n}, \widetilde{u}_{N,n})},
\end{align}
where $m \ge n$ denotes a number of quadrature points used. 

However, \revision{ for $\widetilde{u}_{N,n}$ the Galerkin orthogonality holds only up to the consistency error} as $u_{N}$ and $\widetilde{u}_{N,n}$ solve different variational formulations. 
%But strictly speaking, $u_{N}$ and $\widetilde{u}_{N}$ are not solving the same weak formulation and so the Galerkin orthogonality holds only up to the consistency errors
%\begin{align*}
%	a(u-\widetilde{u}_{N},v_{N}) \le |a(\widetilde{u}_{N},v_{N})- \widetilde{a}_{n}(\widetilde{u}_{N},v_{N})|+|l(v_{N})-\widetilde{l}_{n}(v_{N})| ~~~\text{for all } v_{N} \in \mathcal{T}_{\gamma},
%\end{align*}
%where $\widetilde{u}_{N} $ solves the weak formulation $\widetilde{a}_{n}(\widetilde{u}_{N},v_{N}) = \widetilde{l}_{n}(v_{N}) ~~~\text{for all }v_{N} \in \mathcal{T}_{\gamma}.$ 
For a high number of quadrature points $n$ the consistency error is small in comparison with the approximation error. However, 
for $n$ close to the polynomial degree $p$ we need a different approach.  The idea is to calculate an additional reference solution $\widetilde{u}_{N,m}$ with an increased number of quadrature points $m \gg n$ and use the triangle inequality to estimate the energy norm error by
\begin{align}\label{energy_norm_approx_triangle}
	\sqrt{a(u-\widetilde{u}_{N,n}, u-\widetilde{u}_{N,n})} \le \sqrt{a(u-\widetilde{u}_{N,m}, u-\widetilde{u}_{N,m})} + \sqrt{a(\widetilde{u}_{N,m}-\widetilde{u}_{N,n}, \widetilde{u}_{N,m}-\widetilde{u}_{N,n})}.
\end{align} 
By choosing $m$ sufficiently large, we can again use approximation (\ref{energy_norm_approx}) for the first term of the right hand-side. The second term can be approximated with the same small consistency error
\begin{align}
\label{method-3}
	\sqrt{a(\widetilde{u}_{N,m}-\widetilde{u}_{N,n}, \widetilde{u}_{N,m}-\widetilde{u}_{N,n})} \approx \sqrt{\widetilde{a}_{m}(\widetilde{u}_{N,m}-\widetilde{u}_{N,n}, \widetilde{u}_{N,m}-\widetilde{u}_{N,n})}.
\end{align}
We can interpret the first term in (\ref{energy_norm_approx_triangle}) as a good approximation of the energy norm error $\sqrt{a(u-u_{N}, u-u_{N})} $ 
and the second term in (\ref{energy_norm_approx_triangle}) as the implementation error. The following example shows that the difference between the approximation methods (\ref{energy_norm_approx}) and (\ref{energy_norm_approx_triangle}) can be significant.

\begin{figure}[h]
	\centering
\includegraphics{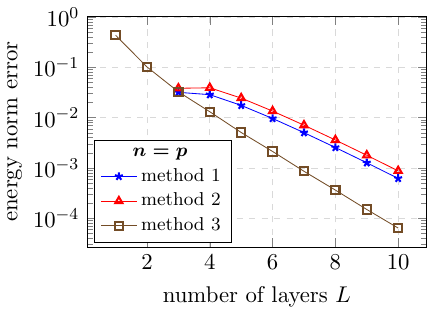} \quad
\includegraphics{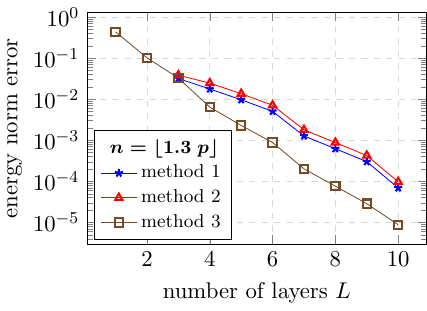} 
\includegraphics{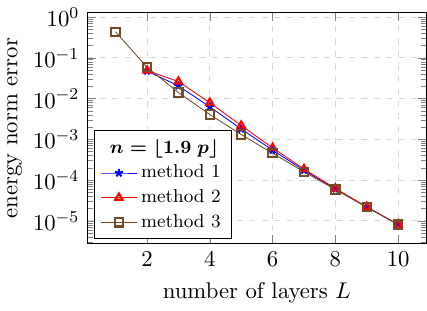}   \quad 
\includegraphics{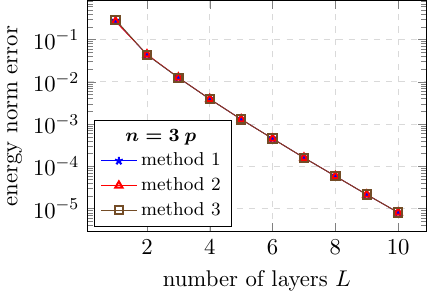} 
	\caption{Three different methods (see Example~\ref{example_energy_error_approx}) to calculate the energy norm error of $hp$-FEM with $n = \mathcal{O}(p)$ quadrature points on a geometric mesh with grading factor $\sigma = 0.172$, polynomial degree $p = L$, $s = 3/4$.}
	\label{fig:plot_error_approx}%
\end{figure}
\begin{example}\label{example_energy_error_approx}
	We employ a geometric mesh $\mathcal{T}^{L}_{geo,\sigma}$  with grading factor $\sigma = 0.172$ and take piecewise polynomials of degree $p = L$. In Figure \ref{fig:plot_error_approx}, three different error measures are plotted versus the number of refinement layers $L$ for different numbers of quadrature points $n = \mathcal{O}(p)$ used to calculate the solution $\widetilde{u}_{N,n}$:
\begin{itemize}
\item Method 1: Use approximation (\ref{energy_norm_approx}) with the same number of quadrature points $m$ for $\widetilde{a}_{m}(\cdot,\cdot)$ as for the solution $\widetilde{u}_{N,n}$, i.e. $m=n$.

\item Method 2: Use approximation (\ref{energy_norm_approx}) and increase the number of quadrature points for the bilinear form $\widetilde{a}_{m}(\cdot,\cdot) $ to $m = 6p$.

\item Method 3: 
%Use approximation (\ref{energy_norm_approx_triangle}) with $m = 6p$ quadrature points for the reference solution $\widetilde{u}_{N,m}$ and the bilinear form $\widetilde{a}_{m}(\cdot,\cdot) $.
Use approximation (\ref{energy_norm_approx_triangle}) with $m = 6p$ quadrature points for the reference solution $\widetilde{u}_{N,m}$ and the bilinear form $\widetilde{a}_{m}(\cdot,\cdot) $.
\end{itemize}
For the cases $n = \lfloor1.9 \; p \rfloor$ and $ n = 3\: p$ all three methods produce nearly identical results, whereas for $n = p $ and $n = \lfloor1.3 \; p \rfloor$ the method of calculating the norm has a significant impact. We observe that method 1 overestimates the energy norm error significantly and also increasing the number of quadrature points for the norm calculation (method 2) does not help either. This is consistent with the fact that method 2 does not decrease the consistency error that is made in the Galerkin orthogonality. We also note that for the cases $n = p$ and $n = \lfloor 1.3 p\rfloor$ the computed ``energies'' were larger than the exact energy so that 
no errors are reported for these cases in Fig.~\ref{fig:plot_error_approx}. 

%However, method 1 is the most natural one and the cheapest to compute, since we do not have to calculate a more accurate solution to estimate the existing one. 
\end{example}

The next example is similar to an example in \cite{bahr2023exponential} that shows exponential convergence of $hp$-FEM, where the linear system was assembled using the quadrature approach (\ref{eq:fully_discrete}) in this article.

\begin{example} \label{example_conv_rate}
We employ a geometric mesh $\mathcal{T}^{L}_{geo,\sigma}$  with grading factor $\sigma = 0.25$ and take piecewise polynomials of degree $p = L$. In Figure~\ref{fig:plot1}, the energy norm error (approximation (\ref{energy_norm_approx_triangle}) with $m = 6p$) is plotted versus the number of refinement layers $L$ for different fractional parameters $s$. For the number of quadrature points, we used $n:= \lfloor1.2 \; p \rfloor$ and, as predicted by Theorem~\ref{main_result}, we observe exponential convergence with respect to the number of layers $L$ noting that $N \sim L^2$. In fact, the convergence behavior is $\mathcal{O}(\sigma^{L/2}L^{-1})$ and thus slightly faster than \revision{asserted} by 
Theorem~\ref{main_result}. An argument for this observation is given in \cite[Sec.~{4}]{bahr2023exponential}. 

\begin{figure}[h]
	\centering
\includegraphics{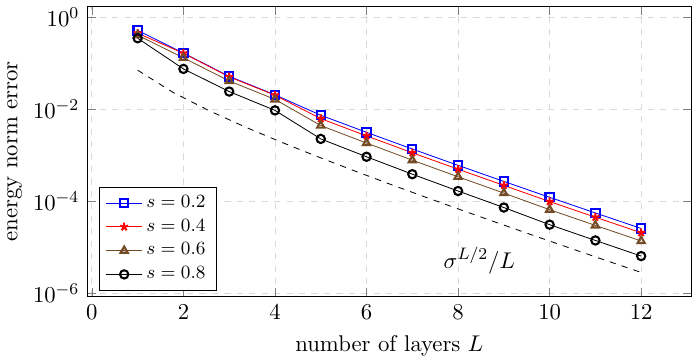}
	\caption{Exponential convergence in the energy norm (approximation (\ref{energy_norm_approx_triangle}) with $m = 6p$) of $hp$-FEM on geometric mesh with grading factor $\sigma = 0.25$, polynomial degree $p = L$, $n:= \lfloor1.2 \; p \rfloor$ quadrature points and different fractional parameters $s$.}
	\label{fig:plot1}
\end{figure}
\end{example}

Next, we discuss the number of quadrature points used. Although Theorem~\ref{main_result} suggests that \revision{a choice of quadrature points $n\ge p + 1$} and in particular $n:= p + \widetilde{\lambda} \; p$ for all $\widetilde{\lambda}>0,$ suffices to obtain exponential convergence, the rate, or more precisely, the constant in the exponent, is impacted by the choice of $\widetilde{\lambda}$. 

\begin{example}
Figure~\ref{fig:plot2} plots the energy norm error (approximation (\ref{energy_norm_approx_triangle}) with $m = 6p$) for different numbers of quadrature points $n:= p + \widetilde{\lambda} p$ versus the number of layers $L$ for two different choices of grading parameters, $\sigma = 0.172$ and $\sigma = 0.5$. Again, we choose $p = L$ and fix the fractional parameter $ s = 3/4$. We notice that the grading factor $\sigma$ has a direct impact on the number of quadrature points needed to achieve the same accuracy. For the smaller $\sigma = 0.172$, the rate of the exponential convergence depends on the choice of $\widetilde{\lambda} $, while, for $\sigma = 0.5$, the convergence always appears to be $\mathcal{O}(\sigma^{L/2}L^{-1})$. This can also be observed in the theoretical estimates in Theorem \ref{main_result} as the term $L^{2} p^{6} \rho^{1+2p-2n}$  may be dominant in the case of small $\sigma$.

\begin{figure}[h]
	\centering
\includegraphics{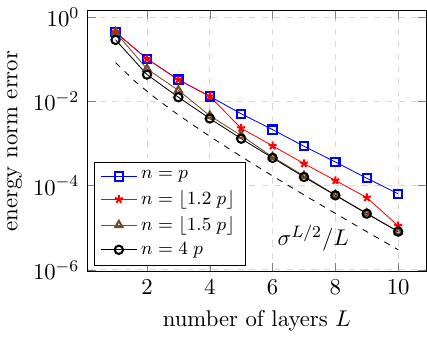} \quad
\includegraphics{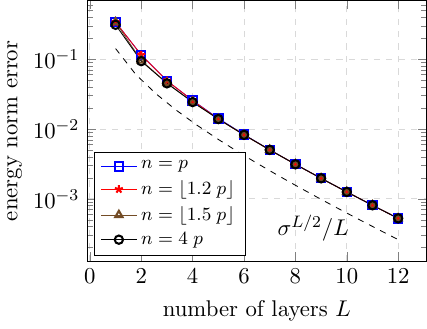} 
	\caption{Exponential convergence in the energy norm (approximation (\ref{energy_norm_approx_triangle}) with $m = 6p$) of $hp$-FEM with $n = \mathcal{O}(p)$ quadrature points on geometric mesh, polynomial degree $p = L$, $s = 3/4$. Left: grading factor $\sigma = 0.172$. Right: grading factor $\sigma = 0.5$.}
	\label{fig:plot2}%
\end{figure}
\end{example}

Finally, we consider the elementwise contributions in Lemma~\ref{lemma_quadrature_error} and observe exponential convergence for two different configurations.

\begin{example}
Figure~\ref{fig:plot3} considers the case of adjacent elements $T:=(x^{geo}_0,x^{geo}_1),\: T':=(x^{geo}_1,x^{geo}_2)$ (left) 
and separated elements $T:=(x^{geo}_0, x^{geo}_1),T':=(x^{geo}_2, x^{geo}_3)$ (right) in a geometric mesh $\mathcal{T}^{L}_{geo,\sigma}$ 
with $L=2$ layers and different grading parameters $\sigma$ (see Def.~\ref{geometric_mesh_definition}).
We plot the absolute quadrature errors $ | I_{T,T'}(v,w) - Q^n_{T,T'}(v,w) |$ for two integrated Legendre polynomials $v:T\rightarrow \mathbb{R}$ and $w:T'\rightarrow \mathbb{R}$  versus the number of quadrature points $n$. On the reference domain $(-1,1)$ they are defined as 
\begin{align}\label{def_int_legendre_pol}
	v(x)=\int_{-1}^{x}P_{5}(t) dt~~ \text{ and }~~ w(y)=\int_{-1}^{y}P_{7}(t) dt,
\end{align}
where $P_{i}(t) \in \mathcal{P}_{i}$ denotes the $i$-th Legendre polynomial.
We used $Q^{50}_{T,T'}(v,w)$ with 50 quadrature points, as the reference solution $ I_{T,T'}(v,w) $ and observe the predicted exponential convergence rate as well as that the rate decreases with  $\sigma$. This is in line with Lemma~\ref{lemma_quadrature_error} since $\rho \rightarrow 1 $ as $\sigma \rightarrow 0$. We stress that Figure~\ref{fig:plot3} shows the absolute error; the final relative error is close to machine precision.

\begin{figure}[h]
	\centering
\includegraphics{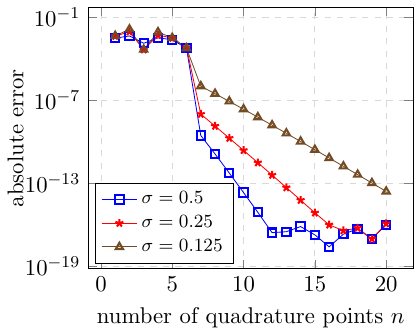} \quad
\includegraphics{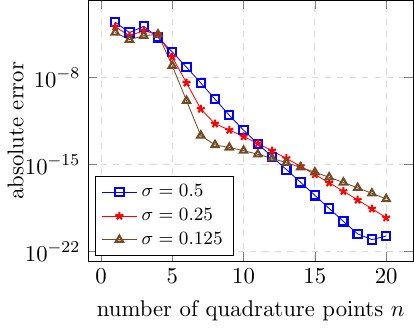} 
	\caption{Exponential convergence of the elementwise contributions $| I_{T,T'}(v,w) - Q^n_{T,T'}(v,w) |$ for the integrated Legendre polynomials (\ref{def_int_legendre_pol}) on geometric meshes $\mathcal{T}^{L}_{geo,\sigma}$ with $L=2$ layers and different grading parameters $\sigma$. Left: adjacent elements. Right: separated elements.}
	\label{fig:plot3}%
\end{figure}
\end{example}

%----------------------------
\subsection*{Acknowledgments}
%----------------------------
BB and JMM gladly acknowledge financial support by Austrian Science Fund (FWF) 
throughout the special research program \textit{Taming complexity in PDE systems} (grant SFB F65,
\href{https://doi.org/10.55776/F65}{DOI:10.55776/F65}).

%\FloatBarrier

\bibliographystyle{alpha}
\bibliography{literature.bib}

%\bibliographystyle{alpha} 
%%\bibliographystyle{abbrv}
%\bibliography{literature.bib}

\end{document}